 \newcommand{\bfC}{{\boldsymbol C}}
 \newcommand{\bfH}{{\boldsymbol H}}
 \newcommand{\bff}{{\boldsymbol f}}
 \newcommand{\bfu}{{\boldsymbol u}}
 \newcommand{\bfuini}{{\bfu_0}}
 \newcommand{\bfv}{{\boldsymbol v}}
 \newcommand{\bfw}{{\boldsymbol w}}
 \newcommand{\bfe}{{\boldsymbol e}}
 \newcommand{\bfn}{\boldsymbol n}
 \newcommand{\bfx}{\boldsymbol x}
 \newcommand{\bfX}{\boldsymbol X}
 \newcommand{\bfvarphi}{{\boldsymbol \varphi}}
 \newcommand{\bfnabla}{{\boldsymbol \nabla}}
 \newcommand{\dgammax}{\ \mathrm{d}\gamma(\bfx)}
 \newcommand{\dx}{\ \mathrm{d}\bfx}
 \newcommand{\dt}{\ \mathrm{d} t}
 \newcommand{\ui}{u_i} 
 \newcommand{\vi}{v_i}
 \newcommand{\disc}{{\mathcal D}}
 \newcommand{\mesh}{{\mathcal M}}
 \newcommand{\edge}{{\sigma}}
 \newcommand{\edgeperp}{{\tau}}
 \newcommand{\edges}{{\mathcal E}}
 \newcommand{\edgesK}{\edges(K)}
 \newcommand{\edgesint}{{\mathcal E}_{\mathrm{int}}}
 \newcommand{\edgesext}{{\mathcal E}_{\mathrm{ext}}}
 \newcommand{\edgesinti}{{\mathcal E}_{\mathrm{int}}^{(i)}}
 \newcommand{\edgesexti}{{\mathcal E}_{\mathrm{ext}}^{(i)}}
 \newcommand{\edgesi}{{\edges\ei}}
 \newcommand{\edgesj}{{\edges\ej}}
 \newcommand{\edged}{\epsilon}
 \newcommand{\edgesd}{{\widetilde {\edges}}}
 \newcommand{\edgesdi}{{\edgesd^{(i)}}}
 \newcommand{\edgesdij}{{\edgesd^{(i,j)}}} 
 \newcommand{\edgesdinti}{{\edgesd^{(i)}_{{\rm int}}}}
 \newcommand{\edgesdexti}{{\edgesd^{(i)}_{{\rm ext}}}}
 \newcommand{\ei}{^{(i)}}
 \newcommand{\ej}{^{(j)}}
 \newcommand{\medge}{\vert \edge \vert}
 \newcommand{\deltat}{\delta t}
 \newcommand{\Hmesh}{\bfH_\edges}
 \newcommand{\Hmeshzero}{\bfH_{\edges,0}}
 \newcommand{\Hmeshi}{H_{\edges^{(i)}}}
 \newcommand{\Hmeshizero}{H_{\edges^{(i)},0}}
 \newcommand{\Hmeshzerom}{{\mathbf{H}_{\edges_m,0}}}
 \newcommand{\Hmeshzeromi}{{H_{\edges_m^{(i)},0}}}
 \newcommand{\Hmeshdij}{H_{\edgesd^{(i,j)}}}
 \newcommand{\dive}{{\mathrm{div}}}
 \newcommand{\characteristic}{\mathbb{1}}
 \newcommand{\diam}{{\mathrm{diam}}}
 \newcommand{\llbracket}{\bigl[ \hspace{-0.55ex} |}
 \newcommand{\rrbracket}{| \hspace{-0.55ex} \bigr]}
 \DeclareSymbolFont{slenderlargesymbols}{OMX}{ccex}{m}{n}
 \newcommand{\xN}{\mathbb{N}}
 \newcommand{\R}{\mathbb{R}}
 \newcommand{\mnn}{{m\in\xN}}
 \newcommand{\Fcvedge}{F_{K,\sigma}}
 \newcommand{\ucvedge}{u_{K,\sigma}}
 \newcommand{\ds}{\displaystyle}
 \newcommand{\bs}{\boldsymbol}
 \newcommand{\bop}{\vspace{3mm}}
 \renewenvironment{proof}{\paragraph{Proof:\hspace{-5pt}}}{\hfill$\square$}
\newtheorem{theorem}{Theorem}[section]
\newtheorem{lemma}[theorem]{Lemma}
\theoremstyle{definition}
\newtheorem{definition}[theorem]{Definition}
\theoremstyle{remark}
\newtheorem{remark}[theorem]{Remark}
\DeclareMathOperator{\sign}{sign}
\begin{document}

\title{Convergence of the implicit MAC-discretized Navier--Stokes equations with variable density and viscosity on non-uniform grids.}

\title{Convergence of the implicit {MAC} scheme of the incompressible {N}avier-{S}tokes equations with variable density and viscosity}
%
\author{
{\sc
L\'ea Batteux\thanks{Corresponding author. Email: lea.batteux@univ-antilles.fr}
and
Pascal Poullet\thanks{Email: pascal.poullet@univ-antilles.fr}} \\[2pt]
LAMIA, Universit\'e des Antilles, Campus de Fouillole \\[2pt]
BP 250 F-97159 Pointe-\`a-Pitre Guadeloupe\\[6pt]
{\sc and}\\[6pt]
{\sc Thierry Gallou\"et\thanks{Email: thierry.gallouet@univ-amu.fr},
Raphaele Herbin\thanks{Email: raphaele.herbin@univ-amu.fr}}\\[2pt]
I2M UMR 7373, Aix-Marseille Universit\'e, CNRS, Ecole Centrale de Marseille.\\[2pt]
39 rue Joliot Curie. 13453 Marseille, France\\[6pt]
{\sc and}\\[6pt]
{\sc Jean-Claude Latch\'e\thanks{Email: jean-claude.latche@irsn.fr}}\\[2pt]
IRSN, BP 13115, St-Paul-lez-Durance Cedex, France
}
%


%

%
\maketitle

\begin{abstract}
{
The present paper is focused on the proof of the convergence of the
discrete implicit Marker-and-Cell (MAC) scheme for time-dependent
Navier--Stokes equations with variable density and variable viscosity.
The problem is completed with homogeneous Dirichlet boundary
conditions and is discretized according to a non-uniform Cartesian
grid. A priori-estimates on the unknowns are obtained, and along with
a topological degree argument they lead to the existence of a solution
of the discrete scheme at each time step. We conclude with the proof
of the convergence of the scheme toward the continuous problem as mesh
size and time step tend toward zero with the limit of the sequence of
discrete solutions being a solution to the weak formulation of the
problem.
}
{Finite volume methods; MAC scheme; incompressible Navier--Stokes equations;
variable density and viscosity; transport equations.}
\end{abstract}


\section{Introduction}
\[\mathfrak{u} \mathcal{u}\]
We here consider the numerical approximation of the incompressible Navier--Stokes with variable density and viscosity,
\begin{subequations} \label{pb:cont}
\begin{align}\label{Mass} &
\partial_{t} \bar\rho+\dive(\bar\rho\bar\bfu)=0,
\\ \label{qdm} &
 \bar\rho \partial_{t}\bar{ \bfu}+(\bar{\bfu} \cdot \bfnabla)\bar{\bfu} - \dive(\mu(\bar\rho) D(\bar{\bfu})) +\bfnabla \bar p=\bff,
\\ &
\dive \ \bar\bfu=0, \label{inc}
\end{align}
\end{subequations}
in $\Omega\times (0, T)$ where $T\in\R^{+}$ and $\Omega$ being an open bounded
connected subset of $\R^{d}$, with $d\in \{2, 3\}$. For the sake of convenience we suppose that $\Omega$ may be covered
by a structured grid, so  that $\Omega$ is a finite union of rectangles if $d = 2$ and of rectangular parallelepipeds if $d=3$.
We assume that the source term $\bff$ belongs to $L^2(0,T;L^2(\Omega)^d)$.
The variables $\bar\rho$, $\bar\bfu$ and $\bar p$ are respectively the density, the velocity and the pressure of the flow.
The three above equations respectively express the mass conservation, the momentum balance and the
incompressibility of the flow.
This system is supplemented with initial and boundary conditions:
\begin{equation}
 \bar\bfu|_{\partial \Omega}=0, \  \quad \bar\bfu|_{t=0}=\bfu_{0}, \ \quad \bar\rho|_{t=0}=\rho_{0}.
\label{condition}
\end{equation}
These equations model the motion of mixtures of immiscible fluids having different densities and viscosities.
This system is mainly what we obtain when we want to describe the fluid-structure interaction by volume penalization method
\cite{liu-walkington-07, BatteuxTI2018}; 
One can use this approach to represent the interaction of several phases, for example droplet impact onto a solid or a
surface liquid, accurate tracking of interface surfaces between fluids of different density and viscosity in multiphase flows.
Due to the numerous applications that require faithful model of fluid-structure interaction (in computer vision, image processing,
special effects,...) this research area is in demand, both at the theoretical and numerical level.

The existence of weak solutions has been established by Lions \cite{Lions96},
following results concerning the renormalized solution concept of transport equations \cite{DiPL}. Since these first works,
only few results are available on variable viscosity fluids, contrary to the abondant literature on constant viscosity problems.
Certainly, to treat this mixed PDE system entangling hyperbolic, parabolic and elliptic features, is far for obvious;
an overview of theoretical advances can be found in chapter 6 of \cite{BoyerFabrie-book}.
In order to explain our motivation, let us summarize the theoretical results by requiring the following assumptions
on the regularity of the initial data:
\begin{equation}
\rho_{0} \in L^{\infty}(\Omega) \ \mbox{and}\ \bfu_{0}\in L^{2}(\Omega)^{d}. \label{propinit}
\end{equation}
Let us denote $\rho_{ \min}=\mbox{ess}   \inf_{\bfx\in\Omega} \ \rho_{0}(\bfx), \ \rho_{\max}=
\mbox{ess} \sup_{\bfx\in\Omega} \ \rho_{0}(\bfx)$.
A well-known consequence (see for instance \cite{DiPL}) of equations \eqref{Mass} and \eqref{inc}, is the following maximum principle:
\begin{equation}
 \rho_{\min}\leq\bar\rho(\bfx,t)\leq\rho_{\max},\qquad\quad \mbox{for} \ a.e. \ (\bfx,t)\in\Omega\times(0,T),
 \label{maximum}
\end{equation}
which shows that the natural regularity for $\bar\rho$ is $L^{\infty}(\Omega \times (0, T ))$.
For the velocity $\bar\bfu$, a classical calculation allows to derive natural estimates for the solutions.
Let us take for simplicity the right hand side of \eqref{qdm} $\bff=0$ to derive the kinetic energy equation.
One thus takes the inner product of \eqref{qdm} by $\bar\bfu$ and uses twice the mass conservation principle \eqref{Mass} to obtain:
\begin{equation}
 \partial_{t}(\frac 1 2 \bar\rho \vert \bar\bfu \vert^{2})+\dive(\frac 1 2 \bar\rho \vert \bar\bfu\vert^{2}\bar\bfu)
- \dive(\mu(\bar\rho) D(\bar{\bfu})) \cdot\bar\bfu + \nabla \bar p\cdot\bar\bfu=0.
\end{equation}
Integrating over $\Omega$, one gets, since $\dive \ \bar\bfu = 0$ and $\bar\bfu_{|\partial\Omega} = 0$, that,
for all $t\in (0, T )$:
\begin{equation*}
 \frac{d} {dt} \int_{\Omega} \frac 1 2 \bar\rho(\bfx,t) \vert \bar\bfu(\bfx,t)\vert^{2}\dx+\int_{\Omega}
 \mu(\bar\rho) D(\bar\bfu(\bfx,t)):
 D(\bar\bfu(\bfx,t))\dx=0.
\end{equation*}
Integrating over time yields, once again for all $t\in (0, T )$:
\begin{equation*}
 \int_{\Omega}\frac 1 2 \bar\rho(\bfx,t) \vert \bar\bfu(\bfx,t)\vert^{2}\dx
+\int_{0}^{t}\int_{\Omega} \mu(\bar\rho) \vert D(\bar\bfu(\bfx,t))\vert^{2}\dx \dt
=\int_{\Omega}\frac 1 2 \rho_{0}(\bfx) \vert \bfu_{0}(\bfx)\vert^{2}\dx, \ \forall t\in(0,T).
\end{equation*}
The kinetic energy identity above ensures that the natural regularity for $\bar\bfu$ is to lie in
$L^{\infty}((0, T ); L^{2}(\Omega) )\cap L^{2} ((0, T ); H^{1}_{0}(\Omega)^{d} )$.
From this and the regularity on the density we may define the
weak solutions to problem \eqref{pb:cont} as follows:
\begin{definition}\label{def:pbweak}
Let $\rho_{0}\in L^{\infty}(\Omega)$ such that $\rho_{0}>0$ for a.e. $\bfx\in\Omega$, and let
$\bfu_{0}\in L^{2}(\Omega)^{d}$. A pair $(\bar \rho, \bar \bfu)$ is a weak solution of problem \eqref{pb:cont} if it
satisfies the following properties:
\begin{itemize}
 \item $\bar \rho\in\{ \rho\in L^{\infty}(\Omega\times(0,T)), \ \rho>0 \ a.e. \ \mbox{in} \ \Omega\times(0,T)\}$.
 \item $\bar \bfu\in\{ \bfu \in L^{\infty}(0,T;L^{2}(\Omega)^{d})\cap L^{2}(0,T;H^{1}_{0}(\Omega)^{d}),
 \ \dive \ \bfu=0 \ a.e.
 \ \mbox{in} \ \Omega\times(0,T)\}$.
 \item For all $\varphi$ in $C_{c}^{\infty}(\Omega\times[0,T))$,
\begin{equation}
 -\int_{0}^{T}\int_{\Omega} \bar \rho\partial_{t} \varphi+\bar \bfu\cdot\nabla
  \varphi)\dx\dt
  =\int_{\Omega}\rho_{0}(\bfx)\varphi(\bfx,0)\dx.
  \label{massweak}
\end{equation}
 \item For all $\bfv$ in $\displaystyle \{\bfw\in C_{c}^{\infty}(\Omega\times[0,T))^{d}, \dive \ \bfw=0\}$,
\begin{multline}
\displaystyle
  \int_{0}^{T}\int_{\Omega} \bigr[ -\bar \rho\bar \bfu\cdot\partial_{t}\bfv-
  (\bar \rho \bar \bfu\otimes
  \bar \bfu):\nabla\bfv +\mu(\bar\rho)D(\bar\bfu):D(\bfv)\bigl]\dx\dt\\
  =\int_{\Omega}\rho_{0}(\bfx)\bfu_{0}(\bfx)\cdot\bfv(\bfx,0)\dx+\int_{0}^{T}\int_{\Omega}\bff\cdot
  \bfv \dx\dt
  \label{qdmweak}.
\end{multline}
\end{itemize}
\end{definition}

\noindent
Then, the existence a weak solution to the problem \eqref{pb:cont} as given in Definition
\ref{def:pbweak} was proven in \cite{sim-90-non} with a newer version in
\cite{BoyerFabrie-book}. For a weak formulation featuring the pressure, we have the
existence of a solution with $p\in W^{-1,\infty}(0,T;L^2_0(\Omega))$ ($L^2_0(\Omega)$
being the quotient space $L^2(\Omega)/\R$ which gathers the square integrable
functions that differ from a constant). \\\\
While the subject of numerical approximations of the Navier--Stokes equations (NSE) for
incompressible flows with constant density and viscosity has been widely discussed,
very few results are available in the context of our study. Some of them, based on front
tracking techniques, are level set or phase fields methods \cite{MeBenceOsherJCP94,
AxHeNeytMMA2015, SY15}. 
Some other use discontinuous Galerkin method to compute numerical approximations
\cite{liu-walkington-07}. One can also find some attempts based on fractional time step
approach/projection method \cite{GuermondQuartapelle00, GS09, LMGS13, AxHeNeytMMA2015,
ChaChuDubois2018}.

In what follows, the continuous problem is approximated using a time implicit scheme
combined with the MAC scheme, providing a sequence of discrete solutions
will be shown to converge toward the solution of \eqref{def:pbweak} without
any regularity assumption on the solution. This MAC scheme is one of the most
well-known methods used for the approximation of both the compressible and
incompressible Navier--Stokes equations. It was introduced in \cite{HarlowWelch65}
for the approximation of free-surface problems, however
through numerous papers it appeared to possess remarkable mathematical and physical
properties. Among them is the inf--sup stability 
as first demonstrated in a finite difference context with staggered grids in
\cite{shinstrikwerda97}, and more recently for the generalized class of DDFV methods
\cite{boyerddfv15}. In finite volume context, this spatial approximation scheme also
allows a local conservation of the mass fluxes. Let us note also the recent work of
\cite{MN20} who have shown unconditional and uniform asymptotic stabilities in the
low Mach number regime of the implicit MAC scheme to solve the compressible NSE. \\
Studies of the MAC method applied to linear problems are quite present in the
litterature; in the case of convergence results for the Stokes equations we may
cite \cite{blanc05} where the source term is in $H^{-1}(\Omega)$ and \cite{lisun15}
for a superconvergence proof on non-uniform grids given regularity assumptions on the
velocity and pressure. Some of the studies of the MAC scheme approximation of the
Navier--Stokes equations include the pioneering error analysis in \cite{nicolaides96},
as well as convergence proofs on locally refined grids in \cite{chenier2015},
and non-uniform grids in \cite{conv-mac-NS-18} for the steady and time dependent
Navier--Stokes equations. \\
The goal of this paper is to extend the results from \cite{conv-mac-NS-18} to
the case of a incompressible flow with variable density and variable viscosity
as defined in \eqref{pb:cont}. This problem was partially adressed in
\cite{latche-saleh-17} in the finite-element context with Rannacher-Turek elements,
as the viscosity was considered as constant. It is known that for discontinuous Galerkin
or general finite-element context some stabilization terms occur in the diffusion term
with the density-dependent viscosity \cite{liu-walkington-07,latche-saleh-17}.
One shows that is not the case with the MAC scheme by proving a discrete Korn
inequality in order to control the $L^{2}(0,T;H^{1}_{0}(\Omega)$-norm of the
velocity. Moreover, the staggered MAC scheme ensures, by construction, the control
of the discrete kinetic energy and thus the stability of the scheme
\cite{latche-saleh-17}.

Following what has been introduced before, only a weak solution of mass conservation
exists, then building piecewise constant approximate density that has
low-regularity and has to be strictly positive generates a lack of classical
consistency property. The key point in this work is to prove that
the implicit MAC scheme is weakly consistent in the Lax--Wendroff sense for
incompressible flows with variable density and viscosity. As it has previously
done for constant viscosity flows with staggered finite element context
\cite{latche-saleh-17}, we build a sequence of solutions based on a sequence
of discretizations in time and space. Estimates of the solution are derived
allowing us to use compactness arguments (by generalized Aubin-Lions Theorem to
dicrete derivative and sequence of subspaces \cite{conv-mac-NS-18})
to prove the existence of converging sequences of solutions.

According to the Lax-Wendroff argument, the limit of
the discrete solutions converges (up to a subsequence) toward
the weak solution of problem \eqref{pb:cont}.

In the Section 2, one defines the non-uniform staggered MAC grids.
Section 3 gathers the discrete functional spaces, the discrete operators and
unknowns leading to the definition of the discrete scheme. It also recalls
results and tools from \cite{conv-mac-NS-18} needed in the next
sections. In Section 4 we derive a $L^2(H^1_0)\cap L^{\infty}(L^2)$
estimate of the velocity, a non--uniform $L^2$ bound on the pressure and
the classic $L^{\infty}(L^{\infty})$ bound on the density. Those
estimates are required for the topological degree argument used to prove
the existence of a solution of the discrete problem at every time
step. Finally we resort to compactness results in Section 5 to pass to
the limit in the momentum equation and mass balance equation as the
mesh size and time step tend to $0$. We can conclude that the sequence
of discrete solutions converges (up to a subsequence) toward the weak
solution of problem \eqref{pb:cont}.

\section{MAC discretization}
\label{sec;MAC-def}

Following the introduction, the MAC grid is recalled in the present section
 using so as to allow the self-readability of this paper.
Let us assume that $\overline{\Omega}$ is given by the union of mutually
disjoint rectangles if $d=2$, and rectangular parallelepipeds if $d=3$.
Additionally, let the edges (respectively the faces) of those rectangles
(respectively parallelepipeds) be orthogonal to $(\bfe_1, \ldots, \bfe_d)$,
the canonical basis vectors of $\R^d$.

\begin{definition}[MAC grid]
\label{def:MACgrid}
A mesh $\mathcal{D}$ associated to the MAC discretization of $\Omega$, is
defined by $\mathcal{D} = (\mesh, \edges)$, where:\\

\begin{list}{-}{\itemsep=0.ex \topsep=0.5ex \leftmargin=1.cm \labelwidth=0.7cm \labelsep=0.3cm \itemindent=0.cm}

  \item $\mesh$ : refers to the primal (or pressure) grid, and consists in a
  conforming structured partition of $\Omega$ made of rectangles if $d=2$ or
  rectangular parallelepipeds if $d=3$. A generic element $K$ of $\mesh$ designates
  a primal cell and we note $\bfx_K$ its center of mass. Therefore, we have
  $\overline{\displaystyle{\cup_{K \in \mesh}} K} = \overline{\Omega}$.
  The set of edges (or faces) of $K$ is denoted $\edgesK \subset \R^{d-1}$.
  For an element $\edge \in \edgesK$ in the boundary of $K$, we note
  $\bfx_\edge$ its center of mass.
\item $\edges$ : is the $d$-uplet of dual grids associated to the $d$
components of the velocity. It is defined as the set of every edges (or faces)
of $\mesh$ : $\edges =\{ \edge \in \edges (K) |  K \in \mesh\}$.  We note
$\edges= \edgesint \cup \edgesext$, where $\edgesint$ (resp. $\edgesext$) are
the edges of $\edges$ that lie in the interior (resp. on the boundary) of the
domain. For any $i \in \llbracket 1,d\rrbracket$ the set $\edgesi$ contains the
edges  that are orthogonal to $\bfe_{i}$. Similarly, we define
$\edgesinti = \edgesi \cap \edgesint$ and $\edgesexti = \edgesi \cap \edgesext$
to obtain $\edgesi= \edgesinti \cup \edgesexti$.\\

Let $\edge \in \edgesint$, we note $\edge = K \vert L$ if $(K,L)\in \mesh^2$
are such that $\edge = \partial K \cap \partial L$. Additionally, let $D_{\edge}$
refer to the dual cell associated to $\edge$. We have $D_{\edge} = D_{K, \edge}\cup D_{L,\edge}$
where $D_{K,\edge}$ (resp. $D_{L,\edge}$) denotes the half of cell $K$ (resp. $L$)
adjacent to $\edge$. For an element $\edge \in \edgesext$ adjacent to $K \in \mesh$,
we note $D_\edge=D_{K,\edge}$. Thus, we end up with $d$ partitions of $\Omega$
: $\overline{\Omega} = \overline{\bigcup_{\edge \in \edges_i} D_{\edge}}$ for
$ i=1,\dots, d$.
\end{list}
\end{definition}~\\
In order to deal with the discrete momentum equation later on,
we introduce the faces of the $i-$th dual mesh, which we denote $\edgesdi$. Let
us distinguish once more the internal elements of $\edgesdi$ from the external elements
by writing $\edgesdi =\edgesdinti \cup \edgesdexti$. Thus, for any element
$\edged\in \edgesdinti$ we note $\edged = \edge \vert \edge '$ if  $(\edge, \edge ')\in \edges_i^2$
are such that $\partial D_{\edge} \cap \partial D_{\edge '} = \edged$.\\\\

\begin{figure}[!h]
  \begin{center}
  \begin{tikzpicture}[scale=1]
  \draw[-, red!20, pattern=north west lines, pattern color=red!30, opacity=0.8]
  (1.5,0)--(2.5,0)--(2.5,2)--(1.5,2) -- (1.5,0); 
  \draw[-, blue!20, pattern=north west lines, pattern color=blue!30, opacity=0.8]
  (0.5,1)--(2.5,1)--(2.5,3)--(0.5,3); \path(0.9,2.7) node[blue!80]{$D_\edge$};
  \path(2,0.4) node[red!90]{$D_{K,\edge'}$};
  \draw[-](0,0)--(4.5,0);
  \draw[-](0,2)--(4.5,2);
  \draw[-](0,4)--(4.5,4);
  \draw[-](0.5,-0.5)--(0.5,4.5);
  \draw[-](2.5,-0.5)--(2.5,4.5);
  \draw[-](4,-0.5)--(4,4.5); 
  \draw[-, very thick, blue!50] (0.5,2)--(2.5,2);
  \path(1.9,2.2) node[blue] {\scriptsize $\edge = K|L$}; 
  \path(0.75,3.75) node[black] {$L$};
  \path(0.75,0.35) node[black] {$K$};
  \path(1.5,1) node[black] {$+$};
  \path(1.5,1.35) node[black] {$\bfx_K$};
  \draw[-, very thick, red!50] (2.5,0)--(2.5,2);
  \path(2.75,0.25) node[red,rotate=90] {\footnotesize	 $\edge'$}; 
  \end{tikzpicture}\hspace{15pt}
  \begin{tikzpicture}[scale=0.75]
  \draw[-, blue!8, fill=blue!8] (0.5,2.5)--(5,2.5)--(5,0.0)--(0.5,0.0);
  \draw[-](0,0.5)--(5.5,0.5);
  \draw[-](0,4.5)--(5.5,4.5);
  \draw[-](0.5,0)--(0.4,5);
  \draw[-](5,0)--(5,5);
  \draw[-, very thick, blue!50] (0.45,0.5)--(5,0.5);
  \path(4.15,0.75) node[blue] {\small $\edge = K|L$}; 
  \draw[-, very thick, blue!50] (0.45,4.5)--(5,4.5);
  \path(4.5,4.2) node[blue] {\small $\edge'$}; 
  \path(2.75,4.75) node[blue] {\small $\bfu_{\edge'}\cdot \bfe_2$};
  \path(2.75,4.5) node[blue] {\footnotesize $\times$};
  \path(2.75,0.25) node[blue] {\small $\bfu_{\edge}\cdot \bfe_2$};
  \path(2.75,0.5) node[blue] {\footnotesize $\times$};
  \draw[-, very thick, gray!50] (0.45,2.5)--(5,2.5);
  \path(4.15,2.75) node[gray] {\small $\edged = \edge|\edge'$}; 
  \path(1.5,1.75) node[blue] {\large $D_{K,\edge}$}; 
  \path(2.75,2.25) node[black] {$\rho_K,p_{K}$};
  \path(2.75,2.5) node[black] {\footnotesize $\times$};
  \end{tikzpicture}
  \caption{Representation of $(\mesh,\edges)$ for $d=2$}
  \end{center}
  \label{fig:mesh}
\end{figure}

\noindent Let us define $d\times d$ partitions of $\Omega$ by associating the
$(i,j)$--partition to the elements of $\edgesdi$ that are orthogonal $\bfe_j$ for
$i,j \in \llbracket 1,d\rrbracket$ :

\begin{equation}\label{eq:defcelldualdual}
  (D_\edged)_{\edged \in \edgesdi, \edged \perp \bfe_j} =
  \left\{ \begin{array}{l l }
    \edged \times [\bfx_{\edge}, \bfx_{\edge'}] &\qquad \mbox{ for }\ \edged = \edge | \edge ' \in \edgesdinti \\
    \edged \times [\bfx_{\edge}, \bfx_{\edge,\edged}] &\qquad \mbox{ for }\ \edged \in \edgesdexti \cap \edgesd( D_\edge)
  \end{array} \right.
\end{equation}~\\
where $\bfx_{\edge,\edged}$  refers to the orthogonal projection of $\bfx_{\edge}$
on $\edged$. Hence the $(i,j)$--partition
coincides with the $(j,i)$--partition, and the $(i,i)$--partition is none other
than $\mesh$. For the sake of convenience, we introduce the following subsets of
$\edgesdi$:
\begin{equation*}
\edgesdij = \{ \edged \in \edgesdi, \edged \perp \bfe_j \}
\end{equation*}
\noindent Finally, the constants defined below aim to characterize the non--uniformity
of the space discretization of the MAC grid
$\mathcal{D}$ :
\begin{equation}\label{regmesh}
  \begin{array}{l }
    \eta_\mesh = \max\left\{  \dfrac{\vert \edge \vert}{\vert \edge' \vert}, \ (\edge,\edge') \in \edgesi\times\edgesj,\
    i, j \in \llbracket 1, d\rrbracket, i\not = j \right\} \\\\
    h_\mesh=\max\{\diam(K),  K\in\mesh\}
  \end{array}
\end{equation}~\\
with $|\cdot|$ designating the Lebesgue measure of either $\R^d$ or $\R^{d-1}$.

\begin{figure}[htb]
  \centering
  \begin{center}
    \begin{tikzpicture}[scale=0.5]

      \draw [blue!5, fill=blue!5]  (0,0) rectangle (4,4);
      \path(0.5,0.5) node[blue] {\small $D_\edged $};

      \draw (0,0) rectangle (4,4); \draw (0,4) rectangle (4,8);
      \draw (4,0) rectangle (7,4); \draw (4,4) rectangle (7,8);

      \path(2,2) node[black] {\small  $+$}; \path(5.5,2) node[black] {\small  $+$};
      \path(2,6) node[black] {\small  $+$}; \path(5.5,6) node[black] {\small  $+$};

      \path(4,2) node[black] {\small  $\times$};
      \path(3.6,1.5) node[black] {\small  $u_\edge$};

      \path(0,2) node[black] {\small  $\times$};
      \path(-0.4,1.5) node[black] {\small  $u_\edge'$};

      \draw [blue, line width = 1.5] (2,0) -- (2,4); \path(2.4,3.6) node[blue] {\small $\edged $};

      \draw (0,8) -- (0,8.5); \draw (4,8) -- (4,8.5); \draw (7,8) -- (7,8.5);
      \draw (-0.5,8) -- (0,8); \draw (-0.5,4) -- (0,4); \draw (-0.5,0) -- (0,0);

    \end{tikzpicture} \hspace{10pt}
    \begin{tikzpicture}[scale=0.5]

      \draw [blue!5, fill=blue!5]  (2,2) rectangle (5.5,6);
      \path(2.5,2.4) node[blue] {\small $D_\edged $};

      \draw (0,0) rectangle (4,4); \draw (0,4) rectangle (4,8);
      \draw (4,0) rectangle (7,4); \draw (4,4) rectangle (7,8);

      \path(2,2) node[black] {\small  $+$}; \path(5.5,2) node[black] {\small  $+$};
      \path(2,6) node[black] {\small  $+$}; \path(5.5,6) node[black] {\small  $+$};

      \path(4,2) node[black] {\small  $\times$};
      \path(3.6,1.5) node[black] {\small  $u_\edge$};

      \path(4,6) node[black] {\small  $\times$};
      \path(3.6,6.8) node[black] {\small  $u_\edge'$};

      \draw [blue, line width = 1.5] (2,4) -- (5.5,4); \path(2.4,4.4) node[blue] {\small $\edged $};

      \draw (0,8) -- (0,8.5); \draw (4,8) -- (4,8.5); \draw (7,8) -- (7,8.5);
      \draw (-0.5,8) -- (0,8); \draw (-0.5,4) -- (0,4); \draw (-0.5,0) -- (0,0);

    \end{tikzpicture}
    \captionsetup{justification=centering,margin=1cm}
    \caption{Cells $D_\edged$ associated to elements of  $\edgesdij$ with $\edged = \edge | \edge' \in \edgesd_{int}^{(1)}$:\\
      for $\edged \perp \bfe_1$ (left) ; for $\edged \perp \bfe_2$ (right) ; ($d = 2$)}
    \label{fig:dualdualint}  \end{center} \end{figure}
\begin{figure}[htb]
  \centering

  \begin{tikzpicture}[scale=0.5]

    \draw [blue!5, fill=blue!5]  (2,0) rectangle (5.5,2);
    \path(5,0.6) node[blue] {\small $D_\edged $};

    \draw [blue!5, fill=blue!5]  (8.5,2) rectangle (10,6);
    \path(9.3,2.7) node[blue] {\small $D_{\edged'} $};

    \draw (0,0) rectangle (4,4); \draw (0,4) rectangle (4,8);
    \draw (4,0) rectangle (7,4); \draw (4,4) rectangle (7,8);
    \draw (7,0) rectangle (10,4); \draw (7,4) rectangle (10,8);

    \path(2,2) node[black] {\small  $+$}; \path(5.5,2) node[black] {\small  $+$} ; \path(8.5,2) node[black] {\small  $+$};
    \path(2,6) node[black] {\small  $+$}; \path(5.5,6) node[black] {\small  $+$} ; \path(8.5,6) node[black] {\small  $+$};

    \path(4,2) node[black] {\small  $\times$};
    \path(3.6,1.5) node[black] {\small  $u_\edge$};

    \path(8.5,4) node[black] {\small  $\times$};
    \path(7.8,4.6) node[black] {\small  $u_\edge'$};

    \draw [blue, line width = 1.5] (2,0) -- (5.5,0); \path(2.4,0.4) node[blue] {\small $\edged $};

    \draw [blue, line width = 1.5] (10,2) -- (10,6); \path(9.6,5.7) node[blue] {\small $\edged' $};

    \draw (0,8) -- (0,8.5); \draw (4,8) -- (4,8.5); \draw (7,8) -- (7,8.5); ; \draw (10,8) -- (10,8.5);
    \draw (-0.5,8) -- (0,8); \draw (-0.5,4) -- (0,4); \draw (-0.5,0) -- (0,0);

    \path(4,0) node[black] {\small  $\times$};
    \path(3.6,-0.5) node[black] {\small  $\bfx_{\edge,\edged}$};

    \path(10,4) node[black] {\small  $\times$};
    \path(11,4.5) node[black] {\small  $\bfx_{\edge',\edged'}$};

  \end{tikzpicture}
  \captionsetup{justification=centering,margin=1cm}
  \caption{Cells $D_\edged$ and $D_{\edged'}$ associated respectively  to\\
    $\edged = \edgesd_{ext}^{(1)} \cap \edgesd( D_\edge)$ and $\edged' = \edgesd_{ext}^{(2)} \cap \edgesd( D_\edge')$ ; ($d = 2$)}
  \label{fig:dualdualext}
\end{figure}

\section{Discrete scheme}

In the present section we introduce the discrete scheme associated with
the continuous problem \eqref{pb:cont} using a finite volumes formalism.
In order to do so let us define the discrete unknowns, the functional spaces
they lie in as well as the approximation of the operators in \eqref{pb:cont}.

\subsection{Discrete spaces and unknowns}

For the time discretization, we consider an uniform partition of the time interval
$[0,T]$ with $T > 0$. By denoting $\deltat$ the constant time step, the partition
of $[0,T]$ is given by $\{t^n = n \deltat, n = 0, .. , N\}$ with $N$ satisfying
$T = N\deltat$. For all $n \in \{0,..,N\}$, we proceed with a staggered space
discretization on $\mathcal{D} = (\mesh, \edges)$ in the following sense: the
degrees of freedom for the density and pressure are associated to the primal
cells $K \in \mesh$, whereas those associated to the $i$--th component of the
velocity are located on the faces $\edgesi$. Hence  we are led to handle piecewise
constant functions on the cells $K \in \mesh\,$ (respectively the cells $D_\edge$
with $\edge\in\edges$) and their associated spaces:

\begin{equation}\label{lmeshhmesh}
  L_{\mesh} =\Bigl\{ q = \displaystyle \sum_{K \in \mesh} q_K\characteristic_{K},\, q_K \in \R \Bigr\},\ \ \mbox{and}\ \
  \Hmeshi  =\Bigl\{ v = \displaystyle \sum_{\edge \in \edgesi} v_{\edge}\characteristic_{D_\edge},\, v_{\edge} \in \R \Bigr\},
  \ \  i\in \llbracket 1, d\rrbracket,
\end{equation}

with $\characteristic_K$ and $\characteristic_{D_\edge}$ referring to the characteristic
functions of the cells $K$  and $D_\edge, \edge \in \edges$ respectively:

 \[
  \characteristic_K(\bfx) = \begin{cases}
                  &1  \text{ if } \bfx \in K,\\ &0  \text{ if } \bfx \not \in K,\\
                 \end{cases}
 \quad  \quad \mbox{and }\quad \quad \characteristic_{D_\sigma}(\bfx) = \begin{cases}
                   &1  \text{ if } \bfx \in D_\edge,\\ &0  \text{ if } \bfx \not \in D_\edge.
                 \end{cases}
 \]\\
For any element $q$ in $L_{\mesh}$ we will frequently resort to the notation
$q = (q_K)_{K\in \mesh} \in \R^{N_\mesh}$ with $N_\mesh = \text{card}(\mesh)$.
By analogy, we note $v = (v_{\edge})_{\edge \in \edgesi}$ for any element $ v \in \Hmeshi$. \\\\
As a consequence, an approximation of the velocity field $\bfu = (u_1, .., u_d)^T$
will belong to  $\Hmesh =  \prod_{i=1}^d  \Hmeshi$.  Similarly to the continuous case,
the Dirichlet boundary conditions are incorporated in the definition of the
velocity spaces and, for this purpose, we introduce $\Hmeshizero \subset \Hmeshi, i=1,\ldots,d$,
defined as follows:
\begin{equation}\nonumber
  \Hmeshizero=\Bigl\{u\in\Hmeshi, \, \ u(\bfx)=0\ \,  \forall \bfx \in D_{\edge},
  \,  \ \edge \in \edgesdexti \displaystyle\Bigr\},\quad \quad  i\in \llbracket 1, d\rrbracket.
\end{equation}
along with,
\begin{equation}\nonumber
\Hmeshzero =  \prod_{i=1}^d  \Hmeshizero \subset \Hmesh
\end{equation}\\

The continuous pressure unknown fulfills a role as a Lagrange multiplier of the
incompressibilty constraint \eqref{inc} and needs
to be defined up to a constant. As such, an approximation of the pressure will lie
in the discrete counterpart of $L^2_0(\Omega)$, denoted by $L_{\mesh,0}$ and designating
the space of functions of  $L_{\mesh}$ with zero mean value,
\begin{equation}\label{lmeshzero}
  L_{\mesh,0}=\Bigl\{ q = (q_K)_{K\in \mesh} \in L_{\mesh} , \quad \displaystyle \sum_{K \in \mesh} |K| q_{K} = 0 \displaystyle\Bigr\}.
\end{equation}~

\noindent Finally, the staggered discrete scheme is given by,

\hspace{-10pt}\begin{subequations} \label{eq:scheme:ro}
\begin{align} \nonumber &
\mbox{Let }  \bfu^0 = \Pi_\edges \bfu_0\in \Hmeshzero \quad  \mbox{and} \quad \rho^0 = \Pi_\mesh \rho_0\in L_\mesh
\\ \nonumber &
\\ \nonumber & \mbox{For }  n\in \{ 0, \cdots, N-1\}
\\[1ex] \nonumber & \hspace{2ex}
 \mbox{ Find } (\rho^{n+1},\bfu^{n+1},p^{n+1})\in  L_{\mesh}\times \Hmeshzero \times  L_{\mesh,0} \quad \mbox{satisfying :}
\\[1ex] \label{eq:mass-rv} & \hspace{7ex}
\eth_{t}\rho^{n+1}+\dive_{\mesh}(\rho^{n+1}\bfu^{n+1})=0
\\[1ex] \label{schemevitesse} & \hspace{7ex}
\eth_{t}(\rho \bfu)^{n+1}+  {\bfC}_{\edges} (\rho^{n+1} \bfu^{n+1})\ \bfu^{n+1}
- \dive_\edges(\mu^{n+1}_\edgesd\mathbf{D}_\edgesd(\bfu^{n+1})) + \nabla_{\edges} \ p^{n+1}   =\bff_{\edges}^{n+1}
\\[1ex] \label{schemediv} & \hspace{7ex}
\dive_{\mesh}\bfu^{n+1} = 0,
\end{align}
\end{subequations}~\\
where \eqref{eq:mass-rv}, \eqref{schemevitesse}, \eqref{schemediv} are the
approximations at time $t^{n+1}$ of \eqref{Mass}, \eqref{qdm} and \eqref{inc} respectively for
which we give the details below. The discrete initial data $(\rho^0,\bfu^0)$ is
obtained by projection of $(\rho_0,\bfu_0)$ on $L_{\mesh}$ and $\Hmesh$ using
the following operators:

\begin{align} \label{eq:projdual}
  & \quad \quad \begin{array}{l|l} \displaystyle \Pi_\edges^{(i)}: \quad & \quad  L^1(\Omega) \longrightarrow \Hmeshizero \\ [1ex]
    & \displaystyle \quad v_i \mapsto \Pi_\edges^{(i)}v_i = \sum_{\edge\in\edgesi}
    \Big(\frac{1}{|D_\edge|}\int_{D_\edge} v_i(\bfx) \dx \Big) \characteristic_{D_\edge} \qquad i = 1, \dots, d
\end{array}
\end{align}
\begin{align} \label{eq:projprim}  &
\begin{array}{l|l} \displaystyle
  \Pi_\mesh: \quad & \quad  L^1(\Omega) \longrightarrow L_\mesh \\ [1ex]
  & \displaystyle \quad q \mapsto \Pi_\mesh q = \sum_{K\in\mesh} \Big(\frac{1}{|K|}\int_{K} q(\bfx) \dx)\Big)
  \characteristic_{K}\hphantom{ \qquad i = 1, \dots, d}
\end{array}
\end{align}\\~\\
Similarly, the discrete source term in \eqref{schemevitesse} is defined as $\bff_{\edges}^{n+1} = \Pi_\edges(\bff(t^{n+1},\cdot))$.
\begin{remark}
Operators $\Pi_\mesh$ and $(\Pi_\edges^{(i)})_{i=1,\dots,d}$ are linear and
continuous from $L^p(\Omega)$ into $L^p(\Omega)$. For instance, we have for
$\Pi_\mesh$:
\begin{equation*}
  \| \Pi_\mesh q \|^p_{L^p(\Omega)} \leq  \sum_{K\in\mesh} |K|^{1-p}\left| \int_{K} q(\bfx) \dx \right|^p
  \leq  \sum_{K\in\mesh} |K|^{1-p} |K|^{p-1} \int_{K} \left| q(\bfx) \right|^p \dx =  \| q \|^p_{L^p(\Omega)}
\end{equation*}
\end{remark}
By considering the complete partition of the time interval $[0,T]$, the discrete
functions approaching the solution  of \eqref{pb:cont} are given by:
\begin{align}\label{solupiecewise}
\begin{array}{l l l} \displaystyle
  u_{i}(t,\bfx) & = \ds \sum_{n= 0}^{N-1} \sum_{\edge \in \edgesinti} u_{\edge}^{n+1}\characteristic_{D_\edge}(\bfx)\characteristic_{]t_n, t_{n+1}]}(t),
   \quad  i\in \llbracket 1, d\rrbracket  \vspace{3pt}\\
p(t,\bfx) & = \ds \sum_{n= 0}^{N-1} \sum_{K \in \mesh}p_{K}^{n+1} \characteristic_K(\bfx)\characteristic_{]t_n, t_{n+1}]}(t)\vspace{3pt}\\
\rho(t,\bfx) & = \ds \sum_{n= 0}^{N-1} \sum_{K \in \mesh} \rho_{K}^{n+1} \characteristic_K (\bfx)\characteristic_{]t_n, t_{n+1}]}(t),
\end{array}
\end{align}~\\
where $\characteristic_{]t_n, t_{n+1}]}$ designates the characteristic function
of $]t_n, t_{n+1}]$.\\

\noindent As we aim to study the convergence of the weak discrete problem towards its continuous analogue \eqref{def:pbweak},
let us introduce the weak formulation of \eqref{eq:scheme:ro} :

\begin{subequations}
  \begin{align} \nonumber \\
    \nonumber & \mbox{For }  n\in \{ 0, \cdots, N-1\} \\[1ex]
    \nonumber & \hspace{2ex}
    \mbox{Find} (\rho^{n+1},\bfu^{n+1}) \in L_{\mesh}\times\boldsymbol{E}_\edges \,\,\,
    \mbox{such that for any} \,\,\, (q,\bfv) \in L_{\mesh}\times\boldsymbol{E}_{\edges}\, \mbox{,} \\
    \nonumber\\
    \label{mass:weak} & \hspace{5ex} \int_\Omega \Bigl(\eth_{t} \rho^{n+1} + \dive_{\mesh}(\rho^{n+1}\bfu^{n+1}) \Bigr) q \dx=0,\\
    \label{qdm:weak} & \hspace{5ex}
 \int_\Omega \eth_t(\rho\bfu)^{n+1} \cdot \bfv \dx + b_\edges(\rho^{n+1}\bfu^{n+1}, \bfu^{n+1}, \bfv)
 + \int_\Omega \mu^{n+1}_\edgesd \mathbf{D}_\edgesd(\bfu^{n+1}) : \mathbf{D}_\edgesd(\bfv) \dx \nonumber\\
 &\hspace{5ex}\qquad\qquad\qquad\qquad\qquad\qquad\qquad\qquad\qquad\qquad\qquad\qquad\qquad\qquad
 = \int_\Omega \bff_{\edges}^{n+1} \cdot \bfv \dx ,
\end{align}
\end{subequations}
where $b_\edges$ designates the discrete trilinear form associated to the convection term in \eqref{schemevitesse}
and $\boldsymbol{E}_\edges$ stands for piecewise constant functions of $\Hmeshzero$ that are discrete divergence--free
(see below). Although we will eventually come back to the formulation above for the convergence result,
let us introduce the weak formulation of the problem featuring the pressure:
\hspace{-10pt}\begin{subequations}
  \begin{align} \nonumber \\
    \nonumber & \mbox{For }  n\in \{ 0, \cdots, N-1\}\\[1ex]
    \nonumber & \hspace{2ex}
    \mbox{ Find } (\rho^{n+1},\bfu^{n+1},p^{n+1}) \in L_{\mesh}\times\boldsymbol{E}_\edges\times L_{\mesh,0}  \,\,\,
    \mbox{such that for any} \,\,\, (q,\bfv) \in L_{\mesh}\times\Hmesh\, \mbox{,}\\[1ex]
    \nonumber & \hspace{2ex} \mbox{ Equation \eqref{mass:weak} holds and,}\\
    \label{qdm:weakp} & \hspace{5ex}
 \int_\Omega \eth_t(\rho\bfu)^{n+1} \cdot \bfv \dx + b_\edges(\rho^{n+1}\bfu^{n+1}, \bfu^{n+1}, \bfv)
 + \int_\Omega \mu^{n+1}_\edgesd \mathbf{D}_\edgesd(\bfu^{n+1}) : \mathbf{D}_\edgesd(\bfv) \dx\\
 \nonumber & \hspace{5ex} \displaystyle
 \quad \quad  \quad  \quad  \quad  \quad  \quad  \quad  \quad  \quad  \quad  \quad  \quad
 + \int_\Omega \nabla_\edges p^{n+1} \cdot \bfv \dx
 = \int_\Omega \bff_{\edges}^{n+1} \cdot \bfv \dx.
\end{align}
\end{subequations}

Unlike the former weak formulation of the scheme \eqref{mass:weak}-\eqref{qdm:weak},
this formulation yields the same algebraic equations as the "strong" form of the scheme
\eqref{eq:scheme:ro}.

\subsection{Discrete operators}

Along with the divergence formula, we resort to the following local conservativity
property for the approximation of \eqref{pb:cont}: for any given edge
$\edge = K|L \in \edgesi$, $i\in \llbracket 1,d \rrbracket$, let $\bfn_{K,\edge}$
stand for the unit normal vector to $\edge$ outward $K$, and $\ucvedge$ be defined as
$\ucvedge = u_\edge\, \bfn_{K,\edge} \cdot \bfe_i$. With this definition, we then
have the usual finite volume property of numerical flux local conservativity, through
any primal face:
\begin{equation}\label{eq:conserv}
  \ucvedge =  - u_{L,\sigma}\quad  \text{for}\ \edge = K|L\in \edgesint.
\end{equation}
and a similar property holds for the fluxes through the elements $\edged \in \edgesd$. \\

We mainly focus on the discretization of differential operators for the space
variable. Regarding the time derivatives, special consideration will be taken for
the approximation of $\eth_{t}(\rho \bfu)^{n+1}$ in \eqref{schemevitesse} to yield
the consistency with the discrete mass balance on the dual cells. The latter is
primordial in order to obtain a kinetic energy balance that is analogous to the
continuous case \cite{conv-mac-NS-18}.\\

The continuous mass balance equation \eqref{Mass} is first discretized on $\mesh$
by homogeneity with the discrete density. Hence, for any $K \in \mesh$, we have:

\begin{equation}\label{eq:divdiv}
(\dive_\mesh (\rho^{n+1}\bfu^{n+1}))_K =\ds \frac 1 {|K|} \sum_{\edge\in\edges(K)} \Fcvedge^{n+1},
\end{equation}
where $\Fcvedge^{n+1} = \medge\, \rho^{n+1}_\edge \ucvedge^{n+1}$ for
$K \in \mesh,\ \edge \in \edgesK$ stands for the mass flux across $\edge$ outward
$K$ at time $t^{n+1}$. The value of the density at the face $\edge$ is approximated
by $\rho_{\edge}$ using an upwind scheme,
\begin{align} \label{eq:divflux}  &
\rho^{n+1}_{\edge}=
\begin{cases}
 \rho^{n+1}_{K}   \text{ if } \ \ucvedge^{n+1}\geq 0,\\
 \rho^{n+1}_{L}   \text{ otherwise}.
\end{cases}
\end{align}
\paragraph{Discrete divergence operator $\dive_\mesh$ :}~\\
Additionally, the discretization \eqref{schemediv} of the incompressibility
condition \eqref{inc} is straightforward,

\begin{align} \label{discdiv-u}  &
\begin{array}{l|l} \displaystyle
  \dive_\mesh: \quad & \Hmesh \longrightarrow L_{\mesh}\\ [1ex]
  & \displaystyle \quad \bfu \mapsto \dive_\mesh \bfu = \sum_{K\in\mesh}
  \frac 1 {|K|} \sum_{\edge\in\edges(K)}  \medge\, \ucvedge \ \characteristic_K,
\end{array}
\end{align}
and we may introduce the space of divergence--free functions from $\Hmeshzero$,
 \begin{equation}\label{def:Edivnulle}
   \boldsymbol{E}_{\edges (\Omega)}= \{\bfu\in\Hmeshzero~;~\dive_{\mesh} \ \bfu=0 \}
 \end{equation}

We proceed with the approximation of the momentum balance equation \eqref{qdm}.
Let us define the discrete gradient of a function $\bfu \in \Hmesh$ (in compliance
with the $\bfH^1$ norm and seminorm). For $(i,j) \in \llbracket 1,d \rrbracket^2$
we define $(\eth_j u_i)$, the approximation of $(\partial_j u_i)$, as:
\begin{equation}\label{eq:defduidxj}
  (\eth_j u_i)_{D_\edged} = \left\{ \begin{array}{l l }
    \dfrac{u_{\edge'} - u_{\edge}}{d(\bfx_{\edge},\bfx_{\edge'})} (\bfn_{\edge, \edged} \cdot \bfe_j) &
    \qquad \mbox{ for } \edged = \edge | \edge ' \in \edgesdinti \\\\
    \dfrac{- u_{\edge}}{d(\bfx_{\edge},\bfx_{\edge,\edged})} (\bfn_{\edge, \edged} \cdot \bfe_j) &
    \qquad \mbox{ for } \edged \in \edgesdexti \cap \edgesd( D_\edge) \end{array} \right.
\end{equation}\\
where $d(\cdot,\cdot)$ refers to the euclidian distance of $\R^d$ and
$\bfn_{\edge, \edged}$ designates the unit normal vector to $\edged$ outward $D_\edge$.

As we now have to deal with functions which are piecewise constant on the cells
$D_\edged$ for  $\edged = \edge | \edge \in \edgesdij$, let us define $\Hmeshdij$ as:
\begin{equation}\label{hmeshdij}
  \Hmeshdij =\Bigl\{ \varphi =  \displaystyle \sum_{\edged \in \edgesdij} \varphi_{\edged}
  \characteristic_{D_\edged}, \quad \varphi_{\edged} \in \R \Bigr\}, \quad  i,j \in \llbracket 1, d\rrbracket
\end{equation}
\paragraph{Discrete gradient operator $\nabla_{\edgesdi}$:}~\\
Let $i \in \llbracket 1, d \rrbracket$, we consider the discrete gradient of the $i^{\text{th}}$
velocity component $u_i$ as follows,

\begin{equation}\label{eq:gradientv}
\begin{array}{l|l}
  \nabla_{\edgesdi}: \quad & \quad \Hmeshi\longrightarrow \prod_{j=1}^d \Hmeshdij \\[1ex]
  & \displaystyle \quad u_i \mapsto \nabla_{\edges^{(i)}} u_i = (\eth_1 u_i, \ldots, \eth_d u_i)
\end{array}\\
\end{equation}\\
with $\eth_j u_i = \ds \sum_{\edged \in \edgesdij }  (\eth_j u_i)_{ D_\edged} \ \characteristic_{D_\edged}$
and $(\eth_j u_i)_{ D_\edged}$ satisfying \eqref{eq:defduidxj}. \\

\begin{remark}
  Considering that the $(i,i)$--partitions, $i\in \llbracket1,d\rrbracket$ coincide with
  $\mesh$, we note $(\eth_i u_i)_K$ in lieu of $(\eth_i u_i)_{D_\edged}$ when
  $\edged \subset K$. Additionally, the definition of $(\eth_i u_i)$ given by
  \eqref{eq:defduidxj} is consistent with \eqref{discdiv-u} by noting that
  $\frac{|\edge|}{|K|} = d(\bfx_\edge, \bfx_\edge ')$ for $(\edge, \edge ')\in \edges(K) \cap \edgesi$.
\end{remark}

\begin{remark}
By analogy with the continuous case, the discrete space $\Hmeshzero$ is endowed with norm $[\cdot,\cdot ]_{1,\edges,0}$ where,
\end{remark}

\begin{equation}
  \left|\begin{array}{l}
 \Hmeshizero \times \Hmeshizero \to \R \\[0.5ex]
 (u,v) \mapsto  [\ui,\vi]_{1,\edgesi,0}
 \end{array}\right. \quad \quad \quad \quad \mbox{ and } \quad \quad \quad \quad
 \left|\begin{array}{l}
 \Hmeshzero \times \Hmeshzero \to \R \\[0.5ex]
 (\bfu,\bfv) \mapsto  [\bfu,\bfv]_{1,\edges,0}
 \end{array}\right.
\end{equation}\\\\
verifying,
\begin{subequations} \label{norm-rv}
\begin{align} \label{normi-rv} &
  \|\ui\|^2_{1,\edgesi,0} = [\ui,\ui]_{1,\edgesi,0} = \sum_{\substack{\edged \in \edgesdinti \\\edged=\edge\!\vert\edge'}}
  \frac{|\edged|}{d_\edged}\ (u_{\edge}-u_{\edge'})^{2}
  + \sum_{\substack{\edged \in \edgesdexti\\ \edged\subset \partial( D_{\edge})}}
  \frac{|\edged|}{d_\edged}\ u_{\edge}^{2}, \quad \mbox{for } i = 1, \ldots, d,\\
  \label{normfull-rv} & \|\bfu\|^2_{1,\edges,0} = [\bfu,\bfu]_{1,\edges,0} = \sum_{i=1}^d \|\ui\|^2_{1,\edgesi,0}.
\end{align}
\end{subequations}\\

\begin{equation}\label{gradient-and-innerproduct-rv}
\int_\Omega \nabla_{\edgesd} \bfu : \nabla_{\edgesd} \bfv  \dx = [\bfu,\bfv]_{1,\edges,0} \quad \forall u, v \in \Hmeshzero.
\end{equation}\\\\
\begin{lemma}[Discrete Poincar\'e inequality (\cite{NS-temam})]\label{poincare}
  Let $\disc=(\mesh,\edges)$ be an admissible MAC mesh of $\Omega$.  For any element $\bfu\in \Hmeshzero$ the discrete inequality holds,
\begin{equation}\label{eq:poincare}
\|\bfu\|_{L^2(\Omega)^d} \leq \diam(\Omega)  \|\bfu\|_{1,\edges,0}
\end{equation}
\end{lemma}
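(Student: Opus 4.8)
The plan is to reduce the claim to a one–dimensional estimate on each velocity component and then exploit the discrete Dirichlet condition, which forces $u_i$ to vanish on the exterior dual cells $D_\edge$, $\edge\in\edgesexti$ — precisely the half–cells that cap the ``columns'' of the $i$-th dual mesh oriented along $\bfe_i$. Since $\|\bfu\|_{L^2(\Omega)^d}^2=\sum_{i=1}^d\|u_i\|_{L^2(\Omega)}^2$ and, by \eqref{gradient-and-innerproduct-rv}--\eqref{normi-rv}, $\|\bfu\|_{1,\edges,0}^2=\sum_{i=1}^d\|u_i\|_{1,\edgesi,0}^2$, it suffices to prove $\|u_i\|_{L^2(\Omega)}\le\diam(\Omega)\,\|u_i\|_{1,\edgesi,0}$ for a fixed $i$.

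First I would fix one column of dual cells of $\edgesi$ aligned with $\bfe_i$, say $D_{\edge_0},D_{\edge_1},\ldots,D_{\edge_M}$ ordered along $\bfe_i$, with $\edge_0,\edge_M\in\edgesexti$ so that $u_{\edge_0}=u_{\edge_M}=0$, and with $\edged_\ell\in\edgesdinti$ the dual–dual face separating $D_{\edge_{\ell-1}}$ from $D_{\edge_\ell}$: this face is orthogonal to $\bfe_i$, hence lies in $\edgesd^{(i,i)}$, and $d_{\edged_\ell}=d(\bfx_{\edge_{\ell-1}},\bfx_{\edge_\ell})$. For any $1\le k\le M-1$, using \eqref{eq:defduidxj} and $u_{\edge_0}=0$ the difference telescopes,
\[
u_{\edge_k}=\sum_{\ell=1}^{k}\bigl(u_{\edge_\ell}-u_{\edge_{\ell-1}}\bigr)
=\sum_{\ell=1}^{k} d_{\edged_\ell}\,(\eth_i u_i)_{D_{\edged_\ell}}\,(\bfn_{\edge_{\ell-1},\edged_\ell}\cdot\bfe_i),
\]
and Cauchy--Schwarz together with $\sum_{\ell=1}^{M}d_{\edged_\ell}=d(\bfx_{\edge_0},\bfx_{\edge_M})\le\diam(\Omega)$ (the length of the column) gives $u_{\edge_k}^2\le\diam(\Omega)\sum_{\ell=1}^{M}d_{\edged_\ell}(\eth_i u_i)_{D_{\edged_\ell}}^2$.

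Next I multiply by $|D_{\edge_k}|$, sum over $k$ (the terms $k=0,M$ vanish), and interchange the two sums. Writing $A$ for the common cross-sectional area of this tube, so that $\sum_{k=0}^M|D_{\edge_k}|=(\text{column length})\,A\le\diam(\Omega)\,A$ and $A\,d_{\edged_\ell}=|D_{\edged_\ell}|$,
\begin{equation*}
\sum_{k=0}^M |D_{\edge_k}|\, u_{\edge_k}^2
\;\le\; \diam(\Omega)\Bigl(\sum_{\ell=1}^M d_{\edged_\ell}\,(\eth_i u_i)_{D_{\edged_\ell}}^2\Bigr)\sum_{k=0}^M |D_{\edge_k}|
\;\le\; \diam(\Omega)^2 \sum_{\ell=1}^M |D_{\edged_\ell}|\,(\eth_i u_i)_{D_{\edged_\ell}}^2 .
\end{equation*}
Summing over all columns in the $\bfe_i$ direction, the left-hand side becomes $\|u_i\|_{L^2(\Omega)}^2$ and the right-hand side becomes $\diam(\Omega)^2\int_\Omega(\eth_i u_i)^2\dx\le\diam(\Omega)^2\|u_i\|_{1,\edgesi,0}^2$, where the last inequality just drops the nonnegative contributions of $\eth_j u_i$, $j\neq i$. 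Taking square roots and summing over $i$ yields \eqref{eq:poincare}.

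The only genuinely delicate point is the bookkeeping of the second step: one must check, from Definition~\ref{def:MACgrid} and \eqref{eq:defcelldualdual}, that for each $i$ the column of $\edgesi$-dual cells along $\bfe_i$ is indeed capped at both ends by cells in $\edgesexti$ (so the first increment $u_{\edge_1}-u_{\edge_0}$ is a bona fide interior dual–dual difference and the telescoping closes), that every crossed face belongs to $\edgesd^{(i,i)}$ with the stated distance $d_{\edged}$, and that the one-dimensional lengths $\sum_\ell d_{\edged_\ell}$ and $\sum_k|D_{\edge_k}|/A$ both add up to the width of $\Omega$ in that direction. Everything else is Cauchy--Schwarz.
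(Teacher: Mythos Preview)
The paper does not actually supply a proof of this lemma: it is stated with a reference to \cite{NS-temam} and then used throughout as a known tool. So there is nothing to compare against directly.

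Your argument is correct and is the standard one for the discrete Poincar\'e inequality on a MAC mesh: one-dimensional telescoping along a column of dual cells in the $\bfe_i$ direction, starting from the zero boundary value, followed by Cauchy--Schwarz, and then summation over columns. The identifications you use are exactly those of the paper: the faces $\edged_\ell$ separating consecutive dual cells in a column along $\bfe_i$ lie in $\edgesd^{(i,i)}$, the associated cells $D_{\edged_\ell}$ are primal cells (the $(i,i)$-partition is $\mesh$), and $|D_{\edged_\ell}|=|\edged_\ell|\,d_{\edged_\ell}$, so that $\sum_\ell|D_{\edged_\ell}|(\eth_i u_i)_{D_{\edged_\ell}}^2$ is precisely $\sum_\ell\frac{|\edged_\ell|}{d_{\edged_\ell}}(u_{\edge_\ell}-u_{\edge_{\ell-1}})^2\le\|u_i\|_{1,\edgesi,0}^2$ by \eqref{normi-rv}. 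The one caveat you raise about bookkeeping is real but harmless here: because $\Omega$ is a finite union of axis-aligned rectangles covered by a conforming MAC grid, every maximal column of $\edgesi$-dual cells along $\bfe_i$ is capped at both ends by a face in $\edgesexti$ (hence $u=0$ there), and the cross-sectional measure $A$ is constant along each such column by the Cartesian product structure of $\mesh$.
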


\paragraph{Discrete gradient operator $\bs{\nabla_\edges}$ :}~\\
As the discrete pressure belongs to $L_\mesh$, its gradient is homogeneous to an element of $\Hmesh$:
\begin{equation}\label{discpressuregradient}
\begin{array}{l|l}
\nabla_\edges:\quad & \quad L_{\mesh} \longrightarrow \Hmeshzero \\[1ex]
& \displaystyle \quad p \longmapsto \nabla_\edges p(\bfx)=(\eth_{1} p (\bfx) , \ldots,  \eth_{d} p (\bfx) )^T
\end{array}
\end{equation}\\
where $\eth_i p  =  \displaystyle \sum_{\edge \in \edgesi} (\eth p)_{\edge} \ \characteristic_{D_\edge} \in \Hmeshizero$
is the $i$--th discrete partial derivative of $p$ given by,
\begin{equation}
\ (\eth p)_{\edge} = \frac{|\edge|}{|D_\edge|}\ (p_L - p_K) (\bfn_{K,\edge}\cdot \bfe_i) \ \mbox { for } \ \edge = K | L
\end{equation}
and we have the following duality property connecting $\dive_\mesh$ and $\nabla_\edges$:
\begin{lemma}\label{duality} Let $(q,\bfv) \in L_{\mesh}\times\Hmeshzero$. Then,
\begin{equation}\label{eq:duality}
\int_{\Omega} q  \ \dive_{\mesh}\bfv \dx +\int_{\Omega} \nabla_{\edges} q\cdot \bfv \dx =0.
\end{equation}
\end{lemma}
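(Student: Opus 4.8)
The plan is to prove \eqref{eq:duality} by the standard discrete integration-by-parts (summation-by-parts) argument: expand each of the two integrals as a finite sum over the mesh, re-index the divergence term so that it runs over faces rather than over cells, and observe that the two resulting face-sums are exactly opposite. Throughout, write $q=(q_K)_{K\in\mesh}$, $\bfv=(v_1,\dots,v_d)$ with $v_i=(v_\edge)_{\edge\in\edgesi}$, and, for $\edge\in\edgesi$, set $v_{K,\edge}=v_\edge\,\bfn_{K,\edge}\cdot\bfe_i$, so that \eqref{eq:conserv} gives $v_{K,\edge}=-v_{L,\edge}$ whenever $\edge=K|L$; moreover $\bfv\in\Hmeshzero$ means $v_\edge=0$ for every boundary face $\edge\in\edgesext$.

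First I would rewrite the divergence term. Since $q$ and $\dive_\mesh\bfv$ are piecewise constant on $\mesh$, \eqref{discdiv-u} yields
\[
\int_\Omega q\,\dive_\mesh\bfv\dx=\sum_{K\in\mesh}|K|\,q_K\,(\dive_\mesh\bfv)_K=\sum_{K\in\mesh}q_K\sum_{\edge\in\edgesK}\medge\,v_{K,\edge}.
\]
In this double sum every interior face $\edge=K|L$ is met exactly twice — once from $K$, once from $L$ — contributing $q_K\medge\,v_{K,\edge}+q_L\medge\,v_{L,\edge}=\medge\,(q_K-q_L)\,v_{K,\edge}$, while a boundary face $\edge$ adjacent to $K$ contributes $q_K\medge\,v_{K,\edge}=0$ because $v_\edge=0$ there. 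Hence, choosing for each $\edge\in\edgesint$ an arbitrary labelling $\edge=K|L$,
\[
\int_\Omega q\,\dive_\mesh\bfv\dx=\sum_{i=1}^d\sum_{\edge\in\edgesinti}\medge\,(q_K-q_L)\,v_{K,\edge}.
\]

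Next I would treat the gradient term. Since $\eth_i q$ and $v_i$ are piecewise constant on the partition $(D_\edge)_{\edge\in\edgesi}$ of $\Omega$, \eqref{discpressuregradient} (applied with $q$ in place of $p$) gives $\int_\Omega\nabla_\edges q\cdot\bfv\dx=\sum_{i=1}^d\sum_{\edge\in\edgesi}|D_\edge|\,(\eth q)_\edge\,v_\edge$. Again only interior faces survive, and for $\edge=K|L\in\edgesinti$ the definition of $(\eth q)_\edge$ gives $|D_\edge|\,(\eth q)_\edge\,v_\edge=\medge\,(q_L-q_K)(\bfn_{K,\edge}\cdot\bfe_i)\,v_\edge=\medge\,(q_L-q_K)\,v_{K,\edge}$; one checks this is independent of the labelling of $\edge$ (swapping $K$ and $L$ flips the sign of both $q_L-q_K$ and $v_{K,\edge}$), consistently with the choice made above. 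Therefore
\[
\int_\Omega\nabla_\edges q\cdot\bfv\dx=\sum_{i=1}^d\sum_{\edge\in\edgesinti}\medge\,(q_L-q_K)\,v_{K,\edge}=-\int_\Omega q\,\dive_\mesh\bfv\dx,
\]
which is \eqref{eq:duality}. There is no genuine analytical difficulty here; the only point needing care is the orientation bookkeeping — using the normal $\bfn_{K,\edge}$ with a consistent convention in both sums and making sure the boundary contributions are killed by the homogeneous Dirichlet space $\Hmeshzero$.
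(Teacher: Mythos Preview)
Your proof is correct and follows essentially the same approach as the paper: expand both integrals as finite sums, reorder the divergence term over interior faces using local conservativity \eqref{eq:conserv} and the boundary condition, and observe that the two face-sums cancel. Your write-up is in fact slightly more explicit about the orientation bookkeeping than the paper's version.
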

\begin{proof}
From the definition of $\dive_{\mesh}\bfv \in L_{\mesh}$ and $\nabla_{\edges} q \in \Hmeshzero$,
we reformulate the left hand side in \eqref{eq:duality} as follows,
\begin{equation}\nonumber
  \ds \sum_{K \in \mesh} \ds  | K | q_K (\dive_{\mesh}\bfv)_K + \ds \sum_{i = 1}^{d} \ds
  \sum_{\substack{\edge \in \edgesinti\\\edge = K | L}}|D_{\edge}| (\nabla_{\edges} q)_\edge v_\edge =
  \ds \sum_{K \in \mesh} \ds \sum_{\edge \in \edges(K)} |\edge| v_{K,\edge} q_K  + \ds \sum_{i = 1}^{d}
  \ds \sum_{\substack{\edge \in \edgesinti\\\edge = K | L}}|\edge| (q_L - q_K) v_{K,\edge}
\end{equation}
Using the local conservativity property \eqref{eq:conserv} as well as the boundary
conditions for $\bfu$, the first term may be rewritten so as to sum over the interior
primal faces $\edges \in \edgesinti$  to yield $\ds \sum_{\sigma \in \edgesint}  |\edge|(q_K - q_L) v_{K,\edge} $.
\end{proof}

\paragraph{Approximating the convection term in the momentum equation.}~\\
We carry out the discretization of the term $\partial_t (\rho \bfu) +   \dive(\rho \bfu \otimes \bfu)$
in \eqref{qdm} at time $t\in ]t_n, t_{n+1}]$ with a few precautions for the purpose
of obtaining a discrete kinetic energy balance. For other instances of this approximation
we refer to \cite{degtopo08,ans-11-anl} in the context of finite element schemes,
to \cite{her-10-kin} for a MAC discretized scheme adapted to DDFV Finite Volumes
in \cite{gou-14-rhov}. \\

Let $i = 1,\dots,d$  and $\edge = K|L \in \edgesinti$.  Owing to the fact that the
discrete density lies in $L_\mesh$, we resort to a convex combination of $\rho_K$
and $\rho_L$ for the computation of $(\rho\bfu)_{D_\edge}^{n+1}$ :
\[
(\rho u_i)^{n+1} = \sum_{\edge\in\edgesi} \rho^{n+1}_{D_{\edge}}u_{\edge}^{n+1}\characteristic_{D_\edge}
\in  \Hmeshizero,
\]
where $(\rho^{n+1}_{D_{\edge}})_{\edge \in \edgesinti} \in \Hmeshi$ satisfies:
\begin{equation} \label{rhovitesse}
\vert D_{\edge}\vert\, \rho_{D_{\edge}}^{n+1}=\vert D_{K,\edge}\vert\, \rho_{K}^{n+1} +\vert D_{L,\edge}\vert\,\rho_{L}^{n+1}.
\end{equation}
As a consequence, the discrete derivative $\eth_{t}(\rho u_i)^{n+1}$ approximating
$\partial_{t}(\rho\bfu_i)$
is given by, \[\eth_{t}(\rho u_i)^{n+1} = \sum_{\edge\in\edgesi}\frac{1}{\deltat}(\rho^{n+1}_{D_{\edge}}u_{\edge}^{n+1}-
      \rho^{n}_{D_{\edge}}u_{\edge}^{n}) \characteristic_{D_\edge} \in  \Hmeshizero.\]\\

\noindent Let $i = 1,\dots,d$  and $\edge = K|L \in \edgesinti$. Focusing on the
nonlinear term  $\dive(\rho \bfu \otimes \bfu)$, we have:
\[
\int_{D_\edge} \dive(\rho u_{i}  \bfu) \dx = \sum_{\edged \in \edges(D_\edge)} \int_{\edged} \rho u_{i}
\bfu \cdot \bfn_{\edge,\edged}\dgammax = \sum_{\edged \in \edges(D_\edge)} F_{\edge,\edged} u_\edged.
\]
where the values $F_{\edge,\edged}$ and $u_\edged$ are to be determined through
convex combinations of $(u_\edge)_\edge$ and $\{F_{K,\edge} , K \in \mesh, \edge \in \edges(K)\}$
respectively. For $u_\edged \simeq (u_i)_{|\edged}$ we adopt an upwind scheme :

\begin{equation}
\label{dual:unkown-rv}
u_\edged=\left\{ \begin{array}{l l } u_\edge & \mbox{ if } F_{\edge,\edged} \geq 0 \\
                                     u_\edge ' & \mbox{ otherwise } \end{array} \right.
\end{equation}\\\\
The dual mass flux through $\edged$ outward $D_{\edge}$, denoted $F_{\edge,\edged}$,
is computed following \cite{her-10-kin,Latche14,conv-mac-NS-18} for the mass balance
equation to hold on $D_\edge$. Let $\edged \in \edgesdij$, then the value of $F_{\edge,\edged}$
is determined by the orientation between $\edged$ and $\bfe_i$ (Fig. \ref{fig:fluxconvint}):

\begin{equation}\label{eq:flux_dual}
F_{\edge,\edged}=\left\{ \begin{array}{l l } \dfrac 1 2 \ ( -F_{K,\edge} + F_{K,\edge'} ) & \mbox{ if }  j=i \vspace{10pt} \\
\dfrac 1 2 \ (F_{K,\edgeperp}+ F_{L,\edgeperp'} ) & \mbox{ otherwise. } \end{array} \right.
\end{equation}
In the first case there exists $K\in \mesh$ and $\edge'\in \edgesdi$ such that
$\edged  = \edge | \edge'  \subset K$. In the second case, given that $\edge = K|L$
with $(K,L) \in \mesh^2$ we may find $\edgeperp \in \edges(K), \edgeperp' \in \edges(L)$
with $\edgeperp,\edgeperp'\in \edges^{(j)}$ so that $\edged$ can be defined as the
union of half of each of the two faces $\edgeperp$ and $\edgeperp'$.
\begin{remark}
  Regardless of the situation, there exists $(\rho_\edged,\hat u_\edged)$ such
  satisfying $F_{\edge, \edged}=   | \edged | \rho_{\edged} \hat u_\edged (\bfn_{\edge,\edged} \cdot \bfe_j)$
  where:
\begin{equation}\label{eq:rhoi-ui}
  (\rho_\edged, \hat u_\edged) = \left\{ \begin{array}{l l }
    \left(\dfrac{\rho_\edge + \rho_{\edge'}}{2} , \dfrac{\rho_\edge u_\edge+\rho_{\edge'} u_{\edge'}}{\rho_\edge +\rho_{\edge'} }\right)
    & \mbox{ if }  j=i \\\\
    \left(\dfrac{| \edgeperp| \rho_\edgeperp +| \edgeperp'|\rho_{\edgeperp'} }{|\edgeperp | + | \edgeperp'|}  ,
    \dfrac{|\edgeperp | \rho_\edgeperp u_\edgeperp +|\edgeperp '| \rho_{\edgeperp'} u_{\edgeperp'}}{|\edgeperp |\rho_\edgeperp +|\edgeperp'|\rho_{\edgeperp'}}
    \right)& \mbox{ otherwise. } \end{array} \right.
\end{equation}
\end{remark}

\begin{figure}[htb]
        \centering
        \begin{tikzpicture}[scale=0.5]

          \draw (0,0) rectangle (4,4); \draw (0,4) rectangle (4,8);
          \draw (4,0) rectangle (7,4); \draw (4,4) rectangle (7,8);

          \path(0.5,0.5) node[black] {\small  $K$};

          \path(2,2) node[black] {\small  $+$}; \path(5.5,2) node[black] {\small  $+$};
          \path(2,6) node[black] {\small  $+$}; \path(5.5,6) node[black] {\small  $+$};

          \path(4,2) node[black] {\small  $\times$};
          \path(3.7,0.3) node[black, rotate=90] {\small  $\edge$};

          \path(0,2) node[black] {\small  $\times$};
          \path(-0.4,0.4) node[black, rotate=90] {\small  $\edge'$};

          \draw [blue, line width = 1.5] (2,0) -- (2,4); \path(1.7,0.3) node[blue, rotate = 90] {\small $\edged$};

          \draw [blue, ->, line width = 1] (0,2) -- (-2,2) ; \path(-0.8,2.6) node[blue] {\small $F_{K,\edge'}$};
          \draw [blue, ->, line width = 1] (4,2) -- (3,2) ; \path(3.1,2.5) node[blue] {\small $- F_{K,\edge}$};

          \draw [red, ->, line width = 1] (2,2) -- (0.5,2) ; \path(0.8,2.7) node[red] {$F_{\edge,\edged}$};

          \draw (0,8) -- (0,8.5); \draw (4,8) -- (4,8.5); \draw (7,8) -- (7,8.5);
          \draw (-0.5,8) -- (0,8); \draw (-0.5,4) -- (0,4); \draw (-0.5,0) -- (0,0);

        \end{tikzpicture} \hspace{10pt}
      \begin{tikzpicture}[scale=0.5]

        \path(0.5,0.5) node[black] {\small  $K$};
        \path(6.5,0.5) node[black] {\small  $L$};

        \draw (0,0) rectangle (4,4); \draw (0,4) rectangle (4,8);
        \draw (4,0) rectangle (7,4); \draw (4,4) rectangle (7,8);

        \path(2,2) node[black] {\small  $+$}; \path(5.5,2) node[black] {\small  $+$};
        \path(2,6) node[black] {\small  $+$}; \path(5.5,6) node[black] {\small  $+$};

        \path(4,2) node[black] {\small  $\times$};
        \path(3.7,0.3) node[black, rotate=90] {\small  $\edge$};

        \path(4,6) node[black] {\small  $\times$};

        \draw [blue, ->, line width = 1] (2,4) -- (2,5.5) ; \path(6.1,5) node[blue,rotate= 90] {\small $F_{K,\edgeperp'}$};
        \draw [blue, ->, line width = 1] (5.5,4) -- (5.5,5.5) ; \path(1.3,4.8) node[blue,rotate= 90] {\small $ F_{K,\edgeperp}$};

        \draw [red, ->, line width = 1] (4,4) -- (4,5.5) ; \path(3.3,4.5) node[red] {$F_{\edge,\edged}$};

        \draw [red, line width = 1.5] (0.05,3.9) -- (3.95,3.9); \path(0.5,3.6) node[red] {\small $\edgeperp $};
        \draw [red, line width = 1.5] (4.05,3.9) -- (6.95,3.9); \path(6.5,3.6) node[red] {\small $\edgeperp' $};

        \draw [blue, line width = 2] (2,4) -- (5.5,4); \path(2.4,3.6) node[blue] { $\edged $};

        \draw (0,8) -- (0,8.5); \draw (4,8) -- (4,8.5); \draw (7,8) -- (7,8.5);
        \draw (-0.5,8) -- (0,8); \draw (-0.5,4) -- (0,4); \draw (-0.5,0) -- (0,0);

      \end{tikzpicture}
      \captionsetup{justification=centering,margin=1cm}
      \caption{Dual mass flux associated to $\edged = \edge | \edge' \in \edgesd_{int}^{(1)}$:\\
        for $\edged \perp \bfe_1$ (left) ; for $\edged \perp \bfe_2$ (right) ; ($d = 2$)}
      \label{fig:fluxconvint}
\end{figure}
It follows that the nonlinear convection operator ${C}^{(i)}_\edges$ associated
to the $i$--th component of the momentum equation \eqref{schemevitesse} is defined
as,
\begin{equation*}
  \begin{array}{l|l}
   {C}^{(i)}_\edges(\rho\bfu): \quad
    & \quad\Hmeshizero  \longrightarrow  \Hmeshizero \\
     & \displaystyle \quad v \longmapsto {C}^{(i)}_\edges(\rho\bfu) v = \sum_{\edge \in \edgesinti}
     \frac 1{|D_\edge|} \sum_{\substack{\edged \in \tilde{\edges} (D_\edge)\\  \edged=\edge|\edge'}} \!
    F_{\edge,\edged}\  v_{\edged} \ \characteristic_{D_\edge}.
  \end{array}
   \end{equation*}
where $v_{\edged}$ and $F_{\edge,\edged}$ are given by (\ref{dual:unkown-rv}--\ref{eq:rhoi-ui}).
The full convection operator is given by $\bfv \in \Hmeshzero \mapsto {\bfC}_\edges (\rho\bfu) \bfv
= (C^{(1)}_\edges(\rho\bfu) v_1, \ldots, C^{(d)}_\edges(\rho\bfu) v_d)^T$, and for
$i=1,\dots,d$ the following duality property holds on $\edgesdi$:

 \begin{equation}  \label{dd-conv}
  \begin{array}{l l}
    \ds \int_\Omega {C}^{(i)}_\edges(\rho\bfu) v\ w \dx & =  \ds \sum_{\edge \in \edgesinti}
    \ds \sum_{\substack{\edged \in \tilde{\edges} (D_\edge)\\ \edged=\edge|\edge'}}
    | \edged | \rho_{\edged}\hat u_{\edged}v_{\edged} (\bfn_{\edge,\edged} \cdot \bfe_j) w_\edge \\
    & = - \ds \sum_{\substack{\edged \in  \edgesdij\\ \edged=\edge|\edge'}}  | \edged |
    \rho_{\edged} \hat u_{\edged}v_{\edged} (w_\edge' - w_\edge  )(\bfn_{\edge,\edged} \cdot \bfe_j) \\
    & = - \ds \sum_{\edged \in  \edgesdij}  | D_\edged | (\rho \hat u_j)_{D_\edged} (v_i)_{D_\edged}
    (\eth_j w_i)_{D_\edged}  \\\vspace{5pt}
    & = - \ds \int_\Omega v_\edgesdi (\rho \hat \bfu)_\edgesdi \cdot \nabla_\edgesdi w \dx
  \end{array}
   \end{equation}
  where $v_\edgesdi = \sum_{\edged \in \edgesdij}  v_\edged \characteristic_{D_\edged}$
  and $(\rho \hat \bfu)_\edgesdi = \sum_{\edged \in \edgesdij} \rho_{\edged}\hat u_{\edged} \characteristic_{D_\edged}$,
  with $v_\edged,\rho_{\edged}, \hat u_{\edged}$ given
  by \eqref{dual:unkown-rv}--\eqref{eq:rhoi-ui}. By analogy with the continuous
  case, we associate a discrete trilinear form to ${\bfC}_\edges(\rho\bfu)$:
\begin{equation}\label{def:weak-conv-op-rv}
  \begin{array}{r l }
    \forall (\rho,\bfu, \bfv, \bfw) \in L_{\mesh}\times\Hmeshzero^3,  &  \\
    b_\edges(\rho\bfu, \bfv,\bfw)  &  =  \ds \sum_{i=1}^d  b^{(i)}_\edges(\rho\bfu,v_i, w_i)  =
    \ds \sum_{i=1}^d  \int_\Omega \Bigl( {C}^{(i)}_\edges(\rho\bfu)\, v_i\Bigr)\ w_i \dx
  \end{array}
\end{equation}~\\

\begin{remark}
  Owing to the definition \eqref{eq:flux_dual} the mass balance holds over the
  dual cells: let $(\rho,\bfu) \in L_\mesh \times \Hmeshzero$ satisfy \eqref{eq:mass-rv}
  and $\edge = K|L\in \edgesdinti$.  Using the approximation of the mass balance
  equation \eqref{Mass} over $K$ and $L$ combined with $|K| = 2 |D_{K,\edge}|$
  and $|L| = 2 |D_{L,\edge}|$ yields:
\begin{equation}
  \frac{1}{\deltat} (\rho_{D_{\edge}}^{n+1} -\rho_{D_{\edge}}^{n}) + \dfrac{1}{|D_\edge|} \sum_{\edged \in \edgesd(D_\edge)} F_{\edge,\edged}=0.
  \label{mass-dual}
\end{equation}
\end{remark}

\paragraph{Discrete diffusion operator} \ \\
\noindent\\

Following the introduction, the momentum balance equation for variable viscosity and variable
density incompressible flows involves the diffusion term ${\dive(\mu \mathbf{D}(\cdot))}$,
for which $\mu(t,\bfx) \in L^\infty([0,T]\times \Omega)$ is the dynamic fluid viscosity (see \eqref{qdm}).
As in the continuous case, the diffusive term discretization, denoted  $ \dive_\edges(\alpha \mathbf{D})(\cdot)$,
should allow the discrete counterpart of the following integration by parts:
\[
\forall (\bfu,\bfv)\in \Hmeshzero^2, \qquad -\int_\Omega \dive(\mu \mathbf{D}(\bfu)) \cdot \bfv \dx
=  \int_\Omega \mu \mathbf{D}(\bfu) : \mathbf{D}(\bfv) \dx,
\]
to hold, but also match with the discrete analog $\mathbf{D}_\edgesd(\cdot)$ of the strain rate tensor. Hence, let
\begin{equation}\label{eq:deformationtensorv}
\begin{array}{l|l}
\mathbf{D}_{\edgesd}: \quad
& \quad
\Hmesh \longrightarrow \bs \prod_{i,j=1}^d \Hmeshdij \\[1ex]
& \displaystyle \quad \bfu \mapsto \mathbf{D}_{\bs \edgesd} \bfu = (\frac{1}{2} (\eth_j u_i + \eth_i u_j ))_{i,j = 1,\dots, d}
\end{array}\\
\end{equation}\\
where $\eth_j u_i = \ds \sum_{\edged \in \edgesdij }  (\eth_j u_i)_{ D_\edged} \ \characteristic_{D_\edged}$
with $(\eth_j u_i)_{ D_\edged}$ given by \eqref{eq:defduidxj}.\\\\

\noindent Thus, for each $\edge \in \edgesi \cap \edgesint$, $i \in \llbracket1,d\rrbracket$,
let us consider the average onto $D_\edge$ of the $i^{th}$ component of the momentum balance
\begin{equation} \label{viscous-discr-ith}
\ds \int_{D_\edge} \sum_{j=1}^d \frac{\partial}{\partial x_j}(\frac{\mu}{2}
(\frac{\partial u_i}{\partial x_j} + \frac{\partial u_j}{\partial x_i})) \dx
= \sum_{\substack{\edged \in \edgesd(D_\edge) \\ \edged \perp \bfe_j}} \ds
\int_{\edged} \frac{\mu}{2}(\frac{\partial u_i}{\partial x_j} + \frac{\partial u_j}{\partial x_i}) (
\bfn_{\edge,\edged}\cdot \bfe_j) \dgammax.
\end{equation}

In order to approximate \eqref{viscous-discr-ith}, viscosity values that are computed
onto the dual faces $\edged \in \edgesd$ are required. As we assume that the viscosity
depends continuously on the density, $(\mu_\edged)_{\edged \in \edgesd}$ will be
defined from $(\rho_K)_{K \in \mesh}$ in a way to be determined. Moreover, recalling
that  the sets $\{D_\edged, \edged \in \edgesdij\}$ and $\{D_\edged, \edged \in \edgesd^{(j,i)}\}$
coincide ; the approximate values of $(\mu_\edged)_{\edged \in \edgesd}$
are chosen to insure that the $N_\edges$-matrix relative to $\dive_\edges(\mu \mathbf{D})(\cdot)$
is symmetric (where we identified $\Hmeshzero$ to $\R^{N_\edges}$ with $\vert\edgesint\vert = N_\edges$).\\

More precisely, if one considers a dual face $\edged \in \edgesd(D_\edge) \cap \edgesdi$
with $\edged \perp \bfe_j$, the approximation of the surface integrals in  \eqref{viscous-discr-ith}
are dependent on whether $\edged$ is perpendicular or tangent to $\bfe_j$,

\begin{align}\label{eq:flux-visc-ij}
  \ds \int_{\edged} \frac{\mu}{2}(\frac{\partial u_i}{\partial x_j}
  + \frac{\partial u_j}{\partial x_i}) (\bfn_{\edge,\edged}\cdot \bfe_j) \dgammax \approx
  \left\{ \begin{array}{l l }|\edged|\mu_\edged \dfrac{u_{\edge '}-u_\edge}{d(\bfx_\edge, \bfx_{\edge'})} & \mbox{ if }  j=i \vspace{10pt} \\
    |\edged|\dfrac{\mu_\edged}{2} \Big(\Big(\dfrac{u_{\edge '}-u_\edge}{d(\bfx_\edge, \bfx_{\edge'})}\Big)(\bfn_{\edge,\edged}\cdot \bfe_j)+  \\
    \quad \Big(\dfrac{u_{\tau'}- u_{\tau}}{d(\bfx_\tau, \bfx_{\tau'})}\Big) (\bfn_{\tau,\edged'}\cdot \bfe_i) \Big)
    (\bfn_{\edge,\edged}\cdot \bfe_j) & \mbox{ otherwise. } \end{array} \right.
\end{align}
In the first case we use the definition of $(\eth_i u_i)_{D_\edged}$ from \eqref{eq:defduidxj}.
In the second case, setting $\edge = K|L$ with $(K,L) \in \mesh^2$ we may find
$\edgeperp \in \edges(K), \edgeperp' \in \edges(L)$ with $\edgeperp,\edgeperp'\in \edges^{(j)}$ so that $\edged$ can
be defined as the of half of each of the two faces $\edgeperp$ and $\edgeperp'$.
Additionally, there exists $\edged ' \in \edgesd^{(j)}$, $\edged ' \perp \bfe_i$
verifying $D_\edged = \edged ' \times [\bfx_\tau, \bfx_{\tau'}]$.\\\\

 \noindent From the approximate value of
$\ds \int_{\edged} \frac{\mu}{2}(\frac{\partial u_i}{\partial x_j} + \frac{\partial u_j}{\partial x_i}) (\bfn_{\edge,\edged}\cdot \bfe_j) \dgammax$,
one can denote by $\dive_\edges(\mu \mathbf{D})(\cdot)$ the discrete diffusion operator by setting
 \begin{equation}\label{def:eq:Du}
  \begin{array}{l|l}
  \dive_\edges(\mu \mathbf{D})(\cdot): \quad
    & \quad\Hmeshzero  \longrightarrow  \Hmeshzero \\
  & \displaystyle \quad \bfu \longmapsto \dive_\edges(\mu \mathbf{D}(\bfu)) =
  \left(\dive^{(1)}_\edges(\mu \mathbf{D}(\bfu)), \dots, \dive^{(d)}_\edges(\mu \mathbf{D}(\bfu))\right)^T
  \end{array}
   \end{equation}
with for $i \in \llbracket 1,d \rrbracket$,
 \begin{equation}\label{def:eq:Dui}
\dive^{(i)}_\edges(\mu \mathbf{D}(\bfu)) = \sum_{\edge \in \edgesinti}
\sum_{\substack{\edged \in \tilde{\edges} (D_\edge)\\  \edged \perp \bfe_j}} \!
\frac {|\edged| }{|D_\edge|} \frac{\mu_\edged}{2} \left(\eth_j u_i + \eth_i u_j\right)_{D_\edged}
(\bfn_{\edge,\edged}\cdot \bfe_j)\ \characteristic_{D_\edge}
\end{equation}
 computed by the previous values \eqref{eq:flux-visc-ij}.\\

\begin{lemma}[\textbf{Duality} $\bs{\dive_\edges}$ -- $\bs{D_\edgesd}$]\label{duality-dual} Let $\disc=(\mesh,\edges)$ a MAC mesh of $\Omega$.\\
Then for all $\bfu, \bfv\in\Hmeshzero$,
\begin{equation}\label{eq:duality-dual}
\int_\Omega \dive_\edges(\mu \mathbf{D}(\bfu)) \cdot \bfv \dx = - \int_\Omega \mu \bs{D_\edgesd}(\bfu) : \bs{D_\edgesd}(\bfv) \dx.
\end{equation}
\end{lemma}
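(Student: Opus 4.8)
The plan is to establish \eqref{eq:duality-dual} by a discrete integration by parts, exactly in the spirit of the proof of Lemma~\ref{duality}. First I would insert the definitions \eqref{def:eq:Du}--\eqref{def:eq:Dui} and, after cancelling the factor $|D_\edge|^{-1}$, write
\[
\int_\Omega \dive_\edges(\mu \mathbf{D}(\bfu))\cdot\bfv\dx
=\sum_{i=1}^d\ \sum_{\edge\in\edgesinti}\ v_\edge \sum_{\substack{\edged\in\edgesd(D_\edge)\\ \edged\perp\bfe_j}} G_{\edge,\edged},
\]
with $G_{\edge,\edged}=\tfrac12\,|\edged|\,\mu_\edged\,(\eth_j u_i+\eth_i u_j)_{D_\edged}\,(\bfn_{\edge,\edged}\cdot\bfe_j)$, $j$ being the index such that $\edged\perp\bfe_j$. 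The two facts that make the argument work are: (i) both $\mu_\edged$ and $(\eth_j u_i+\eth_i u_j)_{D_\edged}$ are attached to the cell $D_\edged$ and do not depend on the incident primal face used to compute them --- for $(\eth_j u_i)_{D_\edged}$ because under $\edge\leftrightarrow\edge'$ the difference $u_{\edge'}-u_\edge$ and the sign $\bfn_{\edge,\edged}\cdot\bfe_j$ both flip, and for $\mu_\edged$ because it is chosen once per $D_\edged$ (the symmetry requirement recalled just before \eqref{eq:flux-visc-ij}); and consequently (ii) the numerical flux $G$ is conservative, $G_{\edge,\edged}=-G_{\edge',\edged}$ for any interior $\edged=\edge\,|\,\edge'$, the analogue of \eqref{eq:conserv} for dual--dual faces.

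Next I would swap the order of summation, reorganising it over the dual--dual faces $\edged\in\edgesdi$. An interior $\edged=\edge\,|\,\edge'$ is incident to the two dual cells $D_\edge,D_{\edge'}$ with opposite outward normals, so by (ii) its total contribution is $G_{\edge,\edged}\,(v_\edge-v_{\edge'})$; a boundary $\edged\in\edgesdexti$ is incident to a single $D_\edge$ and, through the one--sided difference of \eqref{eq:defduidxj}, already carries the homogeneous Dirichlet value on its outer side, while $v_\epsilon=0$ on the boundary velocity faces $\epsilon$. In either situation I would use $v_{\edge'}-v_\edge=d_\edged\,(\eth_j v_i)_{D_\edged}\,(\bfn_{\edge,\edged}\cdot\bfe_j)$ (with $v_{\edge'}$ replaced by $0$ and $\bfx_{\edge'}$ by $\bfx_{\edge,\edged}$ in the boundary case), together with $(\bfn_{\edge,\edged}\cdot\bfe_j)^2=1$ because $\edged\perp\bfe_j$ and $|D_\edged|=|\edged|\,d_\edged$ from \eqref{eq:defcelldualdual}. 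This turns the whole expression into
\[
\int_\Omega \dive_\edges(\mu \mathbf{D}(\bfu))\cdot\bfv\dx
= -\sum_{i,j=1}^d\ \sum_{\edged\in\edgesdij}|D_\edged|\,\frac{\mu_\edged}{2}\,(\eth_j u_i+\eth_i u_j)_{D_\edged}\,(\eth_j v_i)_{D_\edged}.
\]

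Finally I would symmetrise in $(i,j)$. Since $(\eth_j u_i+\eth_i u_j)_{D_\edged}$ is invariant under $i\leftrightarrow j$ and the $(i,j)$-- and $(j,i)$--partitions coincide with a common value $\mu_\edged$ on each cell $D_\edged$, the double sum over $i,j$ is unchanged if $(\eth_j v_i)_{D_\edged}$ is replaced by $\tfrac12\big((\eth_j v_i)_{D_\edged}+(\eth_i v_j)_{D_\edged}\big)$; the right--hand side then reads $-\sum_{i,j}\sum_{\edged}|D_\edged|\,\mu_\edged\,\tfrac12(\eth_j u_i+\eth_i u_j)_{D_\edged}\,\tfrac12(\eth_j v_i+\eth_i v_j)_{D_\edged}$, which is precisely $-\int_\Omega\mu\,\mathbf{D}_\edgesd(\bfu):\mathbf{D}_\edgesd(\bfv)\dx$ by \eqref{eq:deformationtensorv}, proving the claim; applied to pairs of basis vectors this also yields the symmetry of the stiffness matrix mentioned above. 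I expect the only real difficulty to be the combinatorial bookkeeping in the change of summation order --- tracking the index $j=j(\edged)$ carried by each dual--dual face, separating interior faces (two incident dual cells) from boundary faces (one incident dual cell, Dirichlet side), and handling the identification of the $(i,j)$-- and $(j,i)$--cells needed for the symmetrisation --- with no genuine analytic content beyond elementary algebra.
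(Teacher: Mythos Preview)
Your proposal is correct and follows essentially the same approach as the paper: expand the definition of $\dive_\edges(\mu\mathbf D(\bfu))$, reorganise the sum over the dual--dual faces $\edged$ (the paper writes this as the split $S_{int}+S_{ext}$ while you phrase it as conservativity of the flux $G_{\edge,\edged}$), convert the jump $v_{\edge'}-v_\edge$ into $(\eth_j v_i)_{D_\edged}$ via $|D_\edged|=|\edged|\,d_\edged$, and then symmetrise in $(i,j)$ using the coincidence of the $(i,j)$-- and $(j,i)$--partitions together with the common value of $\mu_\edged$. The only cosmetic difference is that the paper carries out the symmetrisation explicitly on $S_{int}$ and $S_{ext}$ separately, whereas you do it in one stroke at the end; the bookkeeping you flag as the only real difficulty is indeed all there is.
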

\begin{proof}
From \eqref{def:eq:Du}--\eqref{def:eq:Dui}, the left hand side of equation \eqref{eq:duality-dual} can be written as,
\begin{equation*}
 \begin{array}{l l l}
   \ds \sum_{i=1}^d\int_\Omega\dive^{(i)}_\edges(\mu \mathbf{D}(\bfu))\cdot v_i\dx &=&
   \ds \sum_{i,j=1}^d \sum_{\edge \in \edgesinti}\sum_{\substack{\edged \in \tilde{\edges} (D_\edge)\\  \edged \perp \bfe_j}} \!
   |\edged| \frac{\mu_\edged}{2} \left(\eth_j u_i + \eth_i u_j\right)_{D_\edged} (\bfn_{\edge,\edged}\cdot \bfe_j) v_\edge \\
    & =&  \overbrace{-  \ds \sum_{i,j=1}^d \sum_{\substack{\edged \in \edgesdij\\  \edged  = \edge | \edge' \in \edgesdinti}} \!
   |\edged| \frac{\mu_\edged}{2} \left(\eth_j u_i + \eth_i u_j\right)_{D_\edged} (\bfn_{\edge,\edged}\cdot \bfe_j)(v_{\edge'} - v_\edge)}^{S_{int}}\\
    && \underbrace{+ \ds \sum_{i,j=1}^d \sum_{\substack{\edged \in \edgesdij\\  \edged \in \edgesd(D_\edge) \cap \edgesdexti}} \!
  |\edged| \frac{\mu_\edged}{2} \left(\eth_j u_i + \eth_i u_j\right)_{D_\edged} (\bfn_{\edge,\edged}\cdot \bfe_j) v_\edge}_{S_{ext}}.
 \end{array}
\end{equation*}
The velocity fields $\bfu$ and $\bfv$ vanishing on the boundary, and owing to property of symmetry
$\{D_\edged,~\edged~\in~\edgesdij\} = \{D_\edged, \edged \in \edgesd^{(j,i)}\}$, one can reorganise the sums $S_{int}$ and $S_{ext}$,
\begin{equation*}
 \begin{array}{l l l}
   \displaystyle S_{int} & = - \dfrac{1}{2}\ds \sum_{i, j=1}^d \Bigg( & \displaystyle
   \sum_{\substack{\edged \in \edgesdij\\\edged  = \edge | \edge' \in \edgesdinti}} \! |\edged| \frac{\mu_\edged}{2}
   \Big(\eth_j u_i + \eth_i u_j\Big)_{D_\edged} (\bfn_{\edge,\edged}\cdot \bfe_j)(v_{\edge'} - v_\edge) \\
   && +   \displaystyle \sum_{\substack{\edged' \in \edgesd^{(j,i)}\\ \edged'  = \tau | \tau' \in \edgesd^{(j)}_{int}}} \! |\edged'| \frac{\mu_{\edged'}}{2}
   \Big(\eth_j u_i + \eth_i u_j\Big)_{D_\edged'} (\bfn_{\tau,\edged'}\cdot \bfe_j)(v_{\tau'} - v_\tau)\Bigg).
 \end{array}
\end{equation*}
Using $|\edged| = |D_\edged|/d(\bfx_\edge, \bfx_{\edge'})$ and $|\edged'| = |D_{\edged'}|/d(\bfx_\tau, \bfx_{\tau'})$, we get
\begin{equation*}
 \begin{array}{l l l}
   \displaystyle S_{int} & = - \dfrac{1}{2}\ds \sum_{i, j=1}^d \Bigg( &
   \displaystyle \sum_{\substack{\edged \in \edgesdij\\ \edged  = \edge | \edge' \in \edgesdinti}} \!
    |D_\edged| \frac{\mu_\edged}{2} \Big(\eth_j u_i + \eth_i u_j\Big)_{D_\edged} \Big(\eth_j v_i\Big)_{D_\edged}  \\
    && +   \displaystyle \sum_{\substack{\edged' \in \edgesd^{(j,i)}\\\edged'  = \tau | \tau' \in \edgesd^{(j)}_{int}}} \!
   |D_{\edged'}| \frac{\mu_{\edged'}}{2} \Big(\eth_j u_i + \eth_i u_j\Big)_{D_{\edged'}} \Big(\eth_i v_j\Big)_{D_{\edged'}} \Bigg).
 \end{array}
\end{equation*}
We proceed similarly for $S_{ext}$ and obtain after summing over the dual faces $\edged \in \edgesdexti$, the expected outcome,
\[
S_{int} + S_{ext}  =  - \sum_{\edged \in \edgesd} |D_\edged| \frac{\mu_\edged}{4}
\Big(\eth_j u_i + \eth_i u_j\Big)_{D_\edged} \Big(\eth_j v_i + \eth_i v_j\Big)_{D_\edged}
= - \int_\Omega \mu \bs{D_\edgesd}(\bfu) : \bs{D_\edgesd}(\bfv) \dx.
\]
\end{proof}\\
In order to derive apriori estimates later on, we resort to a discrete Korn inequality
for the purpose of recovering the $\Hmeshzero$ norm of the velocity.

\begin{lemma}[Discrete Korn inequality]\label{korn} For any $\bfu\in \Hmeshzero$
we have,
\begin{equation}\label{eq:korn}
\|\bfu\|_{1,\edges,0} \leq  \sqrt{2} \|D_\edgesd (\bfu) \|_{L^2(\Omega)}
\end{equation}
\end{lemma}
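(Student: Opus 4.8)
The plan is to prove the discrete \emph{Korn identity}
\begin{equation}\label{eq:kornidentity}
  2\int_\Omega \mathbf{D}_\edgesd(\bfu):\mathbf{D}_\edgesd(\bfu) \dx = \|\bfu\|^2_{1,\edges,0} + \int_\Omega (\dive_\mesh\bfu)^2 \dx,
  \qquad \bfu\in\Hmeshzero,
\end{equation}
which is the discrete counterpart of the continuous identity $2\int_\Omega \mathbf{D}(\bfu):\mathbf{D}(\bfu)\dx = \int_\Omega |\bfnabla\bfu|^2\dx + \int_\Omega(\dive\,\bfu)^2\dx$ valid on $H^1_0(\Omega)^d$; since the last term of \eqref{eq:kornidentity} is nonnegative and $\|\mathbf{D}_\edgesd(\bfu)\|^2_{L^2(\Omega)} = \int_\Omega \mathbf{D}_\edgesd(\bfu):\mathbf{D}_\edgesd(\bfu)\dx$, inequality \eqref{eq:korn} follows at once. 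To obtain \eqref{eq:kornidentity}, I would first expand the left-hand side from the definition \eqref{eq:deformationtensorv}. For each pair $(i,j)$ the functions $\eth_j u_i$ and $\eth_i u_j$ are piecewise constant on the \emph{same} family of cells $\{D_\edged,\ \edged\in\edgesdij\}$ (the symmetry $\{D_\edged,\edged\in\edgesdij\}=\{D_\edged,\edged\in\edgesd^{(j,i)}\}$ recalled before \eqref{eq:deformationtensorv}), so the products below make sense and
\begin{equation*}
  2\int_\Omega \mathbf{D}_\edgesd(\bfu):\mathbf{D}_\edgesd(\bfu) \dx
  = \tfrac{1}{2}\sum_{i,j=1}^d \int_\Omega (\eth_j u_i + \eth_i u_j)^2 \dx
  = \sum_{i,j=1}^d\int_\Omega (\eth_j u_i)^2\dx + \sum_{i,j=1}^d \int_\Omega (\eth_j u_i)(\eth_i u_j) \dx,
\end{equation*}
using that $\sum_{i,j}\int_\Omega(\eth_i u_j)^2 = \sum_{i,j}\int_\Omega(\eth_j u_i)^2$ after relabelling $i\leftrightarrow j$. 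By \eqref{gradient-and-innerproduct-rv} the first sum equals $\int_\Omega\nabla_\edgesd\bfu:\nabla_\edgesd\bfu\dx = \|\bfu\|^2_{1,\edges,0}$, so \eqref{eq:kornidentity} reduces to the \emph{discrete integration-by-parts identity}
\begin{equation}\label{eq:ibpcross}
  \sum_{i,j=1}^d \int_\Omega (\eth_j u_i)(\eth_i u_j) \dx = \int_\Omega (\dive_\mesh\bfu)^2 \dx .
\end{equation}

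To prove \eqref{eq:ibpcross}, I would separate the diagonal part $i=j$ from the off-diagonal part. Since the $(i,i)$-partition is $\mesh$, each $\eth_i u_i$ lies in $L_\mesh$, and the consistency remark following \eqref{eq:gradientv} — together with $|K| = |\edge|\,d(\bfx_\edge,\bfx_{\edge'})$ for opposite faces $\edge,\edge'$ of $K$ and the local conservativity \eqref{eq:conserv} — gives $\sum_{i=1}^d \eth_i u_i = \dive_\mesh\bfu$ in $L_\mesh$. Hence it suffices to establish, for $i\neq j$, the commutation identity
\begin{equation}\label{eq:commute}
  \int_\Omega (\eth_j u_i)(\eth_i u_j) \dx = \int_\Omega (\eth_i u_i)(\eth_j u_j) \dx ,
\end{equation}
because then $\sum_{i,j}\int_\Omega(\eth_j u_i)(\eth_i u_j) = \sum_{i,j}\int_\Omega(\eth_i u_i)(\eth_j u_j) = \int_\Omega \big(\sum_{i}\eth_i u_i\big)^2 \dx = \int_\Omega(\dive_\mesh\bfu)^2\dx$, which is \eqref{eq:ibpcross}. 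Identity \eqref{eq:commute} is the discrete version of $\int_\Omega \partial_j u_i\,\partial_i u_j = \int_\Omega \partial_i u_i\,\partial_j u_j$, obtained in the continuous case by integrating by parts twice, the boundary terms vanishing for $\bfu\in H^1_0(\Omega)^d$. Its discrete proof is a double summation by parts: rewrite $\int_\Omega(\eth_j u_i)(\eth_i u_j)\dx = \sum_{\edged\in\edgesdij}|D_\edged|(\eth_j u_i)_{D_\edged}(\eth_i u_j)_{D_\edged}$, reorganise it as a sum over the faces $\edge\in\edgesi$ carrying $u_\edge$ — a summation by parts in the $\bfe_j$ direction, whose boundary contributions drop because $\bfu\in\Hmeshzero$ vanishes on $\edgesdexti$ — identify the coefficient of $u_\edge$ as a discrete second difference of $u_j$, use that the two discrete differences commute on the Cartesian grid, and sum by parts once more in the $\bfe_i$ direction to land on $\int_\Omega(\eth_i u_i)(\eth_j u_j)\dx$ over the cells of $\mesh$. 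Plugging \eqref{eq:ibpcross} back into the expansion yields \eqref{eq:kornidentity}, and hence \eqref{eq:korn}.

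The part I expect to be the main obstacle is precisely the bookkeeping inside this double summation by parts for \eqref{eq:commute}: one must keep exact track of which primal faces bound each dual–dual cell $D_\edged$, control the orientation factors $(\bfn_{\edge,\edged}\cdot\bfe_k)$, and combine the measure relations $|D_\edged| = |\edged|\,d(\bfx_\edge,\bfx_{\edge'})$ (interior) and their boundary analogues so that all geometric weights telescope correctly, while verifying that every boundary term genuinely cancels thanks to the homogeneous Dirichlet condition built into $\Hmeshzero$. Everything else — the algebraic expansion of $\mathbf{D}_\edgesd$, the identification $\sum_i\eth_i u_i = \dive_\mesh\bfu$, and the deduction of \eqref{eq:korn} from \eqref{eq:kornidentity} — is routine.
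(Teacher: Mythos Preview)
Your proposal is correct and is essentially the paper's argument. Both routes reduce the Korn inequality to the identity $\int_\Omega \nabla_\edgesd\bfu:\nabla_\edgesd^T\bfu\dx=\int_\Omega(\dive_\mesh\bfu)^2\dx$; the only organisational difference is that the paper proves it by establishing the operator-level commutation $\dive_\edges^{(i)}(\nabla_\edgesd^T\bfu)=\eth_i(\dive_\mesh\bfu)$ on each dual cell and then invoking the $\dive_\mesh$--$\nabla_\edges$ duality \eqref{eq:duality}, whereas you propose to carry out the equivalent computation as a pairwise double summation by parts to get $\int_\Omega(\eth_j u_i)(\eth_i u_j)\dx=\int_\Omega(\eth_i u_i)(\eth_j u_j)\dx$. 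The paper's packaging spares some of the geometric bookkeeping you flag as the main obstacle, but the content is the same.
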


\begin{proof}
We resort to the discrete counterparts of the arguments used in \cite{BoyerFabrie-book}.
First and foremost Lemma \ref{duality-dual} is adapted to the discrete operator
$\nabla^T_\edgesd$ to yield,
\begin{equation*}
 \begin{array}{l l }
   \ds \sum_{i=1}^d \int_\Omega u_i \cdot \dive^{(i)}_\edges( \nabla_\edgesd^T \bfu)  \dx   & =
   \ds \sum_{i,j=1}^d \sum_{\edge \in \edgesinti} \sum_{\substack{\edged \in \tilde{\edges} (D_\edge)\\  \edged \perp \bfe_j}} \!
   |\edged| (\bfn_{\edge,\edged}\cdot \bfe_j)u_\edge \left(\frac{\eth u_j}{\eth \bfx_i}\right)_{D_\edged}  \\
   & =  -  \ds \sum_{i,j=1}^d \sum_{\edged \in \edgesdij} \!
  |D_\edged| \left(\frac{\eth u_i}{\eth \bfx_j} \frac{\eth u_j}{\eth \bfx_i}\right)_{D_\edged} = - \int_\Omega \nabla_\edgesd \bfu : \nabla^T_\edgesd \bfu  \dx\\
 \end{array}
\end{equation*}
Moreover, for $\edge = K|L \in \edgesdinti$
\begin{equation*}
 \begin{array}{l l }
   \displaystyle
   \left(\dive^{(i)}_\edges( \nabla_\edgesd^T \bfu)\right)_{D_\edge}   & = \displaystyle \dfrac{1}{|D_\edge|}
   \displaystyle \sum_{j=1}^d \displaystyle \sum_{\substack{\edged \in \tilde{\edges} (D_\edge)\\  \edged \perp \bfe_j}} \!
   |\edged| \left(\frac{\eth u_j}{\eth \bfx_i}\right)_{D_\edged} (\bfn_{\edge,\edged}\cdot \bfe_j) \\
   & = \displaystyle \dfrac{1}{|D_\edge|}  \displaystyle \displaystyle \sum_{j=1}^d |\edge|
   \left(\left(\frac{\eth u_j}{\eth \bfx_j}\right)_{L} - \left(\frac{\eth u_j}{\eth \bfx_j}\right)_{K}\right) (\bfn_{K,\edge}\cdot \bfe_i) \\\\
   & =  \left(\eth^{(i)}_\edges( \dive_\mesh \bfu)\right)_{D_\edge}
    \end{array}
\end{equation*}
The result above is demonstrated for the terms when $j = i$ by noting that for
$(\edged_-, \edged_+)\in (\edgesd(D_\edge)\cap\edgesdij)$ we have $|\edge| = |\edged_+| = |\edged_-|$ et
$\bfn_{\edge,\edged} = - \bfn_{K,\edge}$ si $D_\edged = K$.  \\\\
Otherwise when $j \neq i$, then if $\edge = K|L$ there exists
$(\edgeperp_+,\edgeperp_-) \in (\edges(K)\cap \edges^{(j)})^2$, $(\edgeperp'_+,\edgeperp'_-) \in (\edges(L)\cap \edges^{(j)})^2$
et $(\edged_-, \edged_+)\in (\edgesd(D_\edge)\cap\edgesdij)$ verifying:
\begin{list}{$\ast$}{\itemsep=0.ex \topsep=0.5ex \leftmargin=1.cm \labelwidth=0.7cm \labelsep=0.3cm \itemindent=0.cm}
\item $\edged_+$ the union of half of each of the two faces $\edgeperp_+$ et $\edgeperp_+'$ :
  $\bfn_{\edge,\edged_+} = \bfn_{K,\edgeperp_+} = \bfn_{L,\edgeperp'_+}$.
\item $\edged_-$ the union of half of each of the two faces $\edgeperp_-$ et $\edgeperp_-'$ :
  $\bfn_{\edge,\edged_-} = \bfn_{K,\edgeperp_-} = \bfn_{L,\edgeperp'_-}$.
\end{list}
Then,
\[
|\edged_+|/d(\bfx_{\edgeperp_+}, \bfx_{\edgeperp'_+}) = |\edged_-|/d(\bfx_{\edgeperp_-}, \bfx_{\edgeperp'_-})
= | \edge |/d(\bfx_{\edgeperp_+}, \bfx_{\edgeperp_-}) = | \edge |/d(\bfx_{\edgeperp'_+}, \bfx_{\edgeperp'_-})
\]
to end up with $\displaystyle \sum_{\substack{\edged \in \tilde{\edges} (D_\edge)\\  \edged \perp \bfe_j}} \!
|\edged| (\eth_i u_j)_{D_\edged} (\bfn_{\edge,\edged}\cdot \bfe_j) = |\edge|
\left((\eth_j u_j)_{L} - (\eth_j u_j)_{K}\right) (\bfn_{K,\edge}\cdot \bfe_i)$. \\\\\\
Using those results combined with \eqref{eq:duality} we obtain,
\begin{equation*}
 \begin{array}{l  }
   \displaystyle
   \int_\Omega \nabla_\edgesd \bfu : \nabla^T_\edgesd \bfu  \dx  =  - \displaystyle \int_\Omega
   \bfu \cdot \nabla_\edges (\dive_\mesh \bfu)  \dx   = \displaystyle \int_\Omega  (\dive_\mesh \bfu)^2 \dx \\
    \end{array}
    \end{equation*}
which finally yields the discrete Korn inequality,
\begin{equation}
 \begin{array}{l l }
   \displaystyle
   4  \int_\Omega D_\edgesd (\bfu) : D_\edgesd (\bfu)  \dx & =  \|\bfu\|^2_{1,\edges,0}
   + \displaystyle \int_\Omega \nabla_\edgesd^T \bfu : \nabla_\edgesd^T \bfu  \dx
   + 2 \int_\Omega (\dive_\mesh \bfu)^2   \dx \label{eq:bound-D} \\\\
   & \geq  2 \|\bfu\|^2_{1,\edges,0}
    \end{array}
\end{equation}
thanks to the symmetry property,
\begin{equation}\label{eq:symm-grad}
 \begin{array}{l  }
   \displaystyle  \int_\Omega \nabla_\edgesd^T \bfu : \nabla_\edgesd^T \bfu  \dx = \sum_{i,j = 1}^d
   \sum_{\edged \in \edgesdij} |D_\edged| (\eth_i u_j)^2_{D_\edged} = \int_\Omega \nabla_\edgesd \bfu : \nabla_\edgesd \bfu  \dx
    \end{array}
    \end{equation}

\end{proof}\\\\

Finally, special attention is given for the computation of the discrete viscosity
tensor in \eqref{schemevitesse} in order to gain a symmetry property for the diffusive operator:\\

\begin{definition}[Computation of the viscosity "tensor"]\label{def:dis:visc}
  \noindent Let $\mu : \rho \mapsto \mu(\rho)$ be the $C^0$ viscosity from the continuous
  formulation of the problem \eqref{pb:cont}. Then, the discrete viscosity is function
  of the discrete density in the following manner:\\\\
  For $0\leq n\leq N-1$ and $\rho^{n+1} \in L_{\mesh}$ satisfying the discrete mass balance equation
  \eqref{eq:mass-rv}, we define $\mu^{n+1}\in  L_{\mesh}$ as,\\
\begin{equation}\label{def:mumesh}
  \mu^{n+1} = \mu(\rho^{n+1})  = \ds \sum_{K \in \mesh} \mu_K^{n+1} \characteristic_{K} \quad
  \mbox{ with }  \quad \mu_K^{n+1} = \mu(\rho_K^{n+1}) \quad \forall K \in \mesh
\end{equation}
Additionally, for any $i,j$ in $\llbracket1,d\rrbracket$ we compute $\mu^{n+1}_{ij}$ -- the
discrete viscosity tensor associated to $\edgesdij$ -- from the values $(\mu^{n+1}_K)_{K\in \mesh}$ and we have:
\begin{equation}\label{def:muedgesd}
  \mu^{n+1}_{ij}(\bfx) = \sum_{\substack{\edged \in \edgesdi\\\edged \perp \bfe_j}} \mu^{n+1}_\edged \characteristic_{D_\edged}(\bfx)
\end{equation}
Therefore $\mu^{n+1}_{ij}$  belongs to the space $\Hmeshdij$ defined by \eqref{hmeshdij}. \\\\
Let $\edged \in \edgesdij\cap \edgesdinti = \{ \edged \in \edgesdinti, \edged \perp \bfe_j \}$
be the dual face separating $D_\edge$ and $D_{\edge'}$. If $i =j$ then we set
$\mu^{n+1}_\edged = \mu^{n+1}_K$ if $\edged$ is included in $K\in\mesh$ (see Figure \ref{fig:muedgedii}).
Otherwise, we may find $(K,K',L, L') \in \mesh^4$ verifying $\edge = K|L$ and $\edge' = K' | L'$
(see Figure \ref{fig:muedgedij}) and we define $\mu^{n+1}_\edged$ associated to the dual cell $D_\edged$ as,\\
\begin{equation}\label{def:muedgedint}
  |D_\edged| \mu^{n+1}_\edged = \frac{1}{4} \left( |K| \mu^{n+1}_{K} + |L| \mu^{n+1}_{L} +  |K'| \mu^{n+1}_{K'} + |L'| \mu^{n+1}_{L'} \right)
\end{equation}
If $\edged \in \edgesdij\cap \edgesdexti = \{ \edged \in \edgesdexti, \edged \perp \bfe_j \}$
then $\edged \in \edgesd(D_\edge)\cap \edgesdexti$ for some $\edge = K|L$
and we are in the configuration illustrated in Figure \ref{fig:muedgedii}.
Hence $|D_\edged| = \frac{1}{2}|D_\edge| = \frac{1}{4}(|K|+|L|) $ and we set:
\begin{equation}\label{def:muedgedext}
  |D_\edged| \mu^{n+1}_\edged = \frac{1}{4} \left( |K| \mu^{n+1}_{K} + |L| \mu^{n+1}_{L}\right).
\end{equation}
Finally, so as to avoid an unnecessary excess of notations in some parts of the chapter,
we note $\mu^{n+1}_\edgesd \bs{D_\edgesd}(\bfu) = \frac{1}{2} (\mu^{n+1}_{ij}(\eth_j u_i + \eth_i u_j))_{i,j =1,..,d}$.
\end{definition}
\begin{remark}
  Indeed, when $i=j$ we recall that the partition given by $\{ D_\edged, \edged \in \edgesd^{(i,i)}\}$
  coincides with the set of primal cells $K\in \mesh$. Moreover, thanks to the equivalence of
  the partitions $\{ D_\edged, \edged \in \edgesdij \} = \{ D_\edged, \edged \in \edgesd^{(j,i)} \}$
  there exists $\edged' \in \edgesd^{(j,i)}$ such that $\mu_\edged = \mu_{\edged'}$.
\end{remark}
\begin{figure}[htb]
  \centering
\begin{tikzpicture}[scale=0.7]

  \draw [green!10, fill=green!10]  (0,0) rectangle (4,4);

  \draw (0,4) -- (0,4.5); \draw (4,4) -- (4,4.5);
  \draw (4,4) -- (4.5,4); \draw (4,0) -- (4.5,0);

  \draw (0,0) -- (0,-0.5); \draw (4,0) -- (4,-0.5);
  \draw (0,4) -- (-0.5,4); \draw (0,0) -- (-0.5,0);

  \path(1.3,0.5) node[blue] {\small $ = D_\edged $};

  \path(0.5,0.5) node[black] {\small  $K$};

  \draw (0,0) rectangle (4,4);

  \path(2,2) node[black] {\small  $+$};

  \path(2.5,1.5) node[black] {\small  $\mu_K$};
  \path(3.3,1.5) node[blue] {\small $ = \mu_\edged $};

  \draw [blue, line width = 1.5] (2,0) -- (2,4); \path(2.4,3.7) node[blue] { $\edged $};

    \draw [red, line width = 1.5] (0,0) -- (0,4); \path(-0.5,2) node[red] {\small $\edge ' $};
    \draw [red, line width = 1.5] (4.0,0) -- (4,4); \path(4.5,2) node[red] {\small $\edge $};

\end{tikzpicture} \hspace{20pt}\begin{tikzpicture}[scale=0.4][htb]

  \path(0.5,0.5) node[black] {\small  $K$};
  \path(6.5,0.5) node[black] {\small  $L$};
  \path(0.5,7.5) node[black] {\small  $K'$};
  \path(6.5,7.5) node[black] {\small  $L'$};

  \draw [green!10, fill=green!10]  (2,0) rectangle (5.5,2);

  \path(2.5,1.4) node[blue] {\small $D_\edged $};

  \draw (0,0) rectangle (4,4); \draw (0,4) rectangle (4,8);
  \draw (4,0) rectangle (7,4); \draw (4,4) rectangle (7,8);

  \path(2,2) node[black] {\small  $+$}; \path(5.5,2) node[black] {\small  $+$};
  \path(2,6) node[black] {\small  $+$}; \path(5.5,6) node[black] {\small  $+$};

  \path(2,2.5) node[black] {\small  $\mu_K$};   \path(2,6.5) node[black] {\small  $\mu_{K'}$};
  \path(5.5,2.5) node[black] {\small  $\mu_L$};   \path(5.5,6.5) node[black] {\small  $\mu_{L'}$};

  \draw [blue, line width = 1.5] (2,0) -- (5.5,0); \path(2.4,-0.4) node[blue] {\small $\edged $};

  \draw (0,8) -- (0,8.5); \draw (4,8) -- (4,8.5); \draw (7,8) -- (7,8.5);
  \draw (-0.5,8) -- (0,8); \draw (-0.5,4) -- (0,4); \draw (-0.5,0) -- (0,0);

    \draw [red, line width = 1.5] (4,0) -- (4,3.95); \path(3.7,0.5) node[red] {\small $\edge $};

    \path(4,4) node[black] {\small  $\times$};
    \path(4.5,4.5) node[black] {\small  $\mu_\edged$};

\end{tikzpicture}
\centering
\captionsetup{justification=centering,margin=1cm}
\caption{Computation of \eqref{def:muedgedint} for $\edged \in  \edgesd^{(1,1)}$ (left) ; for $\edged \in  \edgesd^{(1,2)} \cap \edgesext^{(1)}$ (right)}
\label{fig:muedgedii} \end{figure}

\begin{figure}[htb]
  \centering
\begin{tikzpicture}[scale=0.5]

  \path(0.5,0.5) node[black] {\small  $K$};
  \path(6.5,0.5) node[black] {\small  $L$};
  \path(0.5,7.5) node[black] {\small  $K'$};
  \path(6.5,7.5) node[black] {\small  $L'$};

  \draw [green!10, fill=green!10]  (2,2) rectangle (5.5,6);

  \path(2.5,2.4) node[blue] {\small $D_\edged $};

  \draw (0,0) rectangle (4,4); \draw (0,4) rectangle (4,8);
  \draw (4,0) rectangle (7,4); \draw (4,4) rectangle (7,8);

  \path(2,2) node[black] {\small  $+$}; \path(5.5,2) node[black] {\small  $+$};
  \path(2,6) node[black] {\small  $+$}; \path(5.5,6) node[black] {\small  $+$};

  \path(2,1.5) node[black] {\small  $\mu_K$};   \path(2,6.5) node[black] {\small  $\mu_{K'}$};
  \path(5.5,1.5) node[black] {\small  $\mu_L$};   \path(5.5,6.5) node[black] {\small  $\mu_{L'}$};

  \draw [blue, line width = 1.5] (2,4) -- (5.5,4); \path(2.4,4.4) node[blue] {\small $\edged $};

  \draw (0,8) -- (0,8.5); \draw (4,8) -- (4,8.5); \draw (7,8) -- (7,8.5);
  \draw (-0.5,8) -- (0,8); \draw (-0.5,4) -- (0,4); \draw (-0.5,0) -- (0,0);

    \draw [red, line width = 1.5] (4,0) -- (4,3.95); \path(3.7,0.5) node[red] {\small $\edge $};
    \draw [red, line width = 1.5] (4.0,4.05) -- (4,8); \path(3.7,7.5) node[red] {\small $\edge' $};

    \path(4,4) node[black] {\small  $\times$};
    \path(4.5,4.5) node[black] {\small  $\mu_\edged$};

\end{tikzpicture} \hspace{10pt}
\begin{tikzpicture}[scale=0.5]

  \path(0.5,0.5) node[black] {\small  $K$};
  \path(6.5,0.5) node[black] {\small  $L$};
  \path(0.5,7.5) node[black] {\small  $K'$};
  \path(6.5,7.5) node[black] {\small  $L'$};

  \draw [green!10, fill=green!10]  (2,2) rectangle (5.5,6);

  \path(2.5,2.4) node[blue] {\small $D_\edged $};

  \draw (0,0) rectangle (4,4); \draw (0,4) rectangle (4,8);
  \draw (4,0) rectangle (7,4); \draw (4,4) rectangle (7,8);

  \path(2,2) node[black] {\small  $+$}; \path(5.5,2) node[black] {\small  $+$};
  \path(2,6) node[black] {\small  $+$}; \path(5.5,6) node[black] {\small  $+$};

  \path(2,1.5) node[black] {\small  $\mu_K$};   \path(2,6.5) node[black] {\small  $\mu_{K'}$};
  \path(5.5,1.5) node[black] {\small  $\mu_L$};   \path(5.5,6.5) node[black] {\small  $\mu_{L'}$};

  \draw [blue, line width = 1.5] (4,2) -- (4,6); \path(3.6,5.6) node[blue] {\small $\edged '$};

  \draw (0,8) -- (0,8.5); \draw (4,8) -- (4,8.5); \draw (7,8) -- (7,8.5);
  \draw (-0.5,8) -- (0,8); \draw (-0.5,4) -- (0,4); \draw (-0.5,0) -- (0,0);

    \draw [red, line width = 1.5] (0,4) -- (4,4); \path(0.4,4.4) node[red] {\small $\tau $};
    \draw [red, line width = 1.5] (4.0,4) -- (7,4); \path(6.6,4.4) node[red] {\small $\tau' $};

    \path(4,4) node[black] {\small  $\times$};
    \path(4.5,4.5) node[black] {\small  $\mu_\edged$};

\end{tikzpicture}
\centering
\captionsetup{justification=centering,margin=1cm}
\caption{Computation of \eqref{def:muedgedint} for $\edged \in  \edgesd^{(1,2)}$ and $\edged' \in  \edgesd^{(2,1)}$ ($d = 2$)}
\label{fig:muedgedij} \end{figure}
\noindent Thus, the viscosity tensor given by the definition above yields the symmetry of
the diffusive operator as needed. We give below a formulation equivalent to \eqref{eq:scheme:ro}
using the approximations defined so far,
\hspace{-10pt}\begin{subequations} \label{eq:scheme:dis}
\begin{align} \nonumber &
  \mbox{Let }  \bfu^0 = \Pi_\edges \bfu_0\in \Hmeshzero \quad  \mbox{and} \quad \rho^0 = \Pi_\mesh \rho_0\in L_\mesh\\ \nonumber
  & \\ \nonumber
  & \mbox{For }  n\in \{ 0, \cdots, N-1\} \\[1ex] \nonumber
  & \hspace{1ex}
  \mbox{ Find } (\rho^{n+1},\bfu^{n+1},p^{n+1})\in  L_{\mesh}\times \Hmeshzero \times  L_{\mesh} \quad \mbox{satisfying :} \\[1ex]
  \label{eq:mass-rv:dis} & \quad \dfrac{1}{\deltat}(\rho_K^{n+1} - \rho_K^{n}) + \ds \dfrac{1}{|K|}
  \sum_{\edge \in \edges(K)} F^{n+1}_{K,\edge} =0& \forall K \in \mesh \\[1ex] \label{schemevitesse:dis}
  & \quad \dfrac{1}{\deltat}(\rho_{D_\edge}^{n+1}  u_{\edge}^{n+1} - \rho_{D_\edge}^{n}u_{\edge}^{n})
  + \dfrac{1}{|D_\edge|} \ds \sum_{\edged \in \edgesd(D_\edge)} F^{n+1}_{\edge,\edged} u_{\edged}^{n+1}\\[0ex]\nonumber
  &   \quad \quad   \quad \quad  \quad \quad \quad \quad  - (\dive_\edges(\mu_\edgesd\mathbf{D}_\edgesd(\bfu^{n+1})))_{D_\edge}
  +  (\nabla_{\edges} \ p^{n+1})_{D_\edge} = f_{\edge}^{n+1}& \forall i, \forall \sigma \in \edgesinti \\[0ex]\nonumber
  & \\[1ex]\label{schemediv:dis}
  &\,\,\,\, \ds \sum_{\edge \in \edges(K)} |\edge| \bfu^{n+1}_{\edge}\cdot \bfn_{K,\edge} =0& \forall K \in \mesh \\[1ex] \label{schemediv:pre-moy}
  & \quad \ds \sum_{K\in \mesh} |K| p^n_K =0&
\end{align}
\end{subequations}~\\

\section{Preliminary results and tools.}
\label{sec;MAC-tools}

\noindent Due to the staggered nature of the MAC grids, we are led to deal with convex combinations
of the discrete solutions $(\rho^{n+1},\bfu^{n+1})\in  L_{\mesh}\times \Hmeshzero$.
Furthermore, at some point in the proof -- mainly during the passage to the limit in the discrete scheme --
we are required to control those quantities to assure their convergence.
Hence, we define  in this section several "reconstruction" operators so as to cover all our needs.

\begin{definition}[Reconstructions from $\mesh$ to $\edgesi$]
  Let $\disc=(\mesh, \edges)$ be an admissible MAC mesh, and $i\in \llbracket1,d\rrbracket$.
  We define the reconstruction operator from $L_\mesh$ to $\Hmeshi$ as follows:
\begin{equation}
\begin{array}{l|ccl}\label{def:R-mesh-to-edgesi}
\mathcal R_{\mesh}^{\edgesi}: \quad & L_\mesh & \to & \Hmeshi \\[2ex]
& q & \mapsto & \displaystyle \mathcal R_{\mesh}^{\edgesi} q =
\sum_{\substack{\edge \in \edgesinti\\\edge = K | L}} (\mathcal R_{\mesh}^{\edgesi} q)_{D_\edge}\ \characteristic_{D_\edge}
+ \sum_{\substack{\edge \in \edgesexti\\\edge \in \edges(K)}} q_K\ \characteristic_{D_{K,\edge}}
\end{array}
\end{equation}
where,
\[
(\mathcal R_{\mesh}^{\edgesi} q)_{D_\edge} = \alpha_\edge q_K + (1-\alpha_\edge)q_L \quad
\mbox{with}\quad  \alpha_\edge = |D_{K,\edge}|/|D_\edge|  \quad \mbox{if}\quad  \edge = K|L \in \edgesinti
\]
and we note that the value $ (\mathcal R_{\mesh}^{\edgesi} q)_{D_\edge}$ for $\edge \in \edges(K)\cap \edgesexti$ is coherent since $|D_{K,\edge}| = |D_\edge|$.
Additionally, we introduce the following reconstruction operator from $\Hmeshi$ to $L_\mesh$ :
\begin{equation}
\begin{array}{l|ccl}\label{def:R-mesh-to-edgesi}
\mathcal R^{\mesh}_{\edgesi}: \quad & \Hmeshi & \to & L_\mesh \\[2ex]
& v & \mapsto & \displaystyle \mathcal R^{\mesh}_{\edgesi} v =
\sum_{\substack{K\in\mesh}} (\mathcal R^{\mesh}_{\edgesi} v)_K\ \characteristic_{K}
\end{array}
\end{equation}
where,
\[
(\mathcal R^{\mesh}_{\edgesi} v)_K =  \frac{1}{2}(v_\edge + v_{\edge '}) \quad \mbox{with}\quad
(\edge,\edge') \in (\edges(K)\cap\edgesi)^2
\]~\\
which again is coherent as we have $|D_{K,\edge}| = |D_{K,\edge'}| = |K|/2$.
\end{definition}

\begin{remark}
  Both operators are clearly linear. Additionally, for $q \in L_\mesh$ and $v \in \Hmeshi$,
  we compute $\mathcal R_{\mesh}^{\edgesi} q$ and $\mathcal R^{\mesh}_{\edgesi} v$ through convex combinations of the values $(q_K)_{K \in \mesh}$ and $(v_\edge)_{\edge\in\edgesi}$ respectively.
\end{remark}~\\
\noindent Let us prove the following stability result,

\begin{lemma}[\textbf{Stability in $\bs{L^p, p\in [1,\infty]}$}] \label{lem-stab-recons}
Let $q\in L_\mesh$ and  $v \in \Hmeshi$ for some $i \in \llbracket1,d\rrbracket$. Then,
\[
\|\mathcal R_{\mesh}^{\edgesi} q\|_{L^p(\Omega)} \leq \|q\|_{L^p(\Omega)} \quad \mbox{and}
\quad\|\mathcal R^{\mesh}_{\edgesi} v\|_{L^p(\Omega)} \leq \|v\|_{L^p(\Omega)}
\]
\end{lemma}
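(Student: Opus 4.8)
\quad The plan is to use that both operators produce, on each cell, a convex combination of the input values, with weights equal to ratios of dual-cell measures, and then to combine convexity of $t\mapsto |t|^p$ with a bookkeeping of the cell measures of the MAC grid. The case $p=+\infty$ needs no work: each value $(\mathcal R_{\mesh}^{\edgesi} q)_{D_\edge}$ (resp.\ $(\mathcal R^{\mesh}_{\edgesi} v)_K$) lies in the convex hull of $\{q_K,q_L\}$ (resp.\ $\{v_\edge,v_{\edge'}\}$), hence is bounded in modulus by $\|q\|_{L^\infty(\Omega)}$ (resp.\ $\|v\|_{L^\infty(\Omega)}$). From now on I fix $p\in[1,+\infty)$ and recall that for a piecewise constant function $f=\sum_K f_K\characteristic_K$ one has $\|f\|_{L^p(\Omega)}^p=\sum_{K\in\mesh}|K|\,|f_K|^p$, and likewise over the dual cells $\{D_\edge\}_{\edge\in\edgesi}$, which also partition $\Omega$.

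For $\mathcal R_{\mesh}^{\edgesi}$, I would start from an interior face $\edge=K|L\in\edgesinti$: applying convexity of $t\mapsto|t|^p$ to the convex combination with weight $\alpha_\edge=|D_{K,\edge}|/|D_\edge|$ gives $|(\mathcal R_{\mesh}^{\edgesi} q)_{D_\edge}|^p\le \alpha_\edge|q_K|^p+(1-\alpha_\edge)|q_L|^p$, that is
\[
|D_\edge|\,|(\mathcal R_{\mesh}^{\edgesi} q)_{D_\edge}|^p \;\le\; |D_{K,\edge}|\,|q_K|^p + |D_{L,\edge}|\,|q_L|^p,
\]
while for a boundary face this is the trivial identity $|D_\edge|\,|q_K|^p=|D_{K,\edge}|\,|q_K|^p$. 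Summing over all $\edge\in\edgesi$, the left-hand side adds up to $\|\mathcal R_{\mesh}^{\edgesi} q\|_{L^p(\Omega)}^p$, whereas on the right each primal cell $K$ is reached exactly through its two faces $\edge,\edge'$ orthogonal to $\bfe_i$, each carrying weight $|D_{K,\edge}|=|D_{K,\edge'}|=|K|/2$, so the coefficient of $|q_K|^p$ is $|K|/2+|K|/2=|K|$; this yields $\|\mathcal R_{\mesh}^{\edgesi} q\|_{L^p(\Omega)}^p\le\sum_{K\in\mesh}|K|\,|q_K|^p=\|q\|_{L^p(\Omega)}^p$.

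The estimate for $\mathcal R^{\mesh}_{\edgesi}$ follows by a symmetric computation: convexity gives $|(\mathcal R^{\mesh}_{\edgesi} v)_K|^p\le\frac12|v_\edge|^p+\frac12|v_{\edge'}|^p$ for $(\edge,\edge')\in(\edges(K)\cap\edgesi)^2$, and multiplying by $|K|=2|D_{K,\edge}|=2|D_{K,\edge'}|$ gives $|K|\,|(\mathcal R^{\mesh}_{\edgesi} v)_K|^p\le|D_{K,\edge}|\,|v_\edge|^p+|D_{K,\edge'}|\,|v_{\edge'}|^p$; summing over $K\in\mesh$, an interior face $\edge=K|L$ is met from both sides with total weight $|D_{K,\edge}|+|D_{L,\edge}|=|D_\edge|$ and a boundary face with weight $|D_\edge|$, whence $\|\mathcal R^{\mesh}_{\edgesi} v\|_{L^p(\Omega)}^p\le\sum_{\edge\in\edgesi}|D_\edge|\,|v_\edge|^p=\|v\|_{L^p(\Omega)}^p$. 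The step I expect to be the only delicate one is precisely this measure bookkeeping — matching the $\edge$-indexed sums to the $K$-indexed sums — which relies entirely on the staggered-grid identity $|D_{K,\edge}|=|K|/2$ for a face $\edge$ orthogonal to $\bfe_i$, valid by construction of the dual meshes.
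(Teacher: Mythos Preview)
Your proof is correct and follows essentially the same approach as the paper: convexity of $t\mapsto|t|^p$ (Jensen) on each cell, then the measure bookkeeping via $|D_{K,\edge}|=|K|/2$ to reassemble the $L^p$ norm. The only difference is that you spell out the second operator and the reindexing in slightly more detail than the paper does.
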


\begin{proof}
  We prove the first estimate only since both proofs are almost identical.
  The case with $p=\infty$ is straightforward since we are dealing with convex combinations.
  Thus, let $q \in L_\mesh$ and $p\in[1,\infty)$. Using Jensen's inequality and
  $|D_{\edge}|=|D_{K,\edge}|+|D_{L,\edge}|$ for $\edge = K|L$ yields:

\begin{align*}
  \|\mathcal R_{\mesh}^{\edgesi} q\|^p_{L^p(\Omega)} & = \sum_{\substack{\edge \in \edgesinti\\\edge = K | L}}
  |D_\edge| |\alpha_\edge q_K + (1-\alpha_\edge)q_L|^p + \sum_{\substack{\edge \in \edgesexti\\\edge \in \edges(K)}}
  |D_{K,\edge}| |q_K|^p\ \\
  & \leq \sum_{\substack{\edge \in \edgesinti\\\edge = K | L}} \Big[|D_{K,\edge}| |q_K|^p + |D_{L,\edge}| |q_L|^p \Big]
  + \sum_{\substack{\edge \in \edgesexti\\\edge \in \edges(K)}} |D_{K,\edge}| |q_K|^p\ \\
  & \leq \sum_{K\in\mesh} |K| |q_K|^p = \| q\|^p_{L^p(\Omega)}
\end{align*}

\end{proof}

\noindent In the same manner we define, following \cite{conv-mac-NS-18}, a reconstruction operator
from $\Hmeshizero$ to $\Hmeshdij$ for some $(i,j) \in \llbracket1, d\small\rrbracket^2$.
The latter will allow us to deal with the quantities $(\rho_\edged, u_\edged)_{\edged \in \edgesd}$
appearing in the nonlinear velocity convection term in \eqref{schemevitesse:dis}.
Then, we may reformulate the trilinear form $b_\edges$ defined by \eqref{def:weak-conv-op-rv}
so as to obtain estimates necessary to derive the a priori estimates for the problem,
as well as the convergence of the scheme.

\begin{definition}[Reconstructions from $\Hmeshizero$ to $\Hmeshdij$]\label{def:vel-reco}
Let $\disc=(\mesh, \edges)$ be an admissible MAC mesh, and $i,j \in \llbracket1,d\rrbracket$.
We define the reconstruction operator from $\Hmeshizero$ to $\Hmeshdij$ as follows:
\begin{equation}
  \begin{array}{l|ccl}\label{eq:vel-reco}
  \mathcal R_\edgesd^{(i,j)}: \quad & \Hmeshizero & \to & \Hmeshdij \\[2ex]
  & v & \mapsto & \displaystyle \mathcal R_\edgesd^{(i,j)} v =
  \sum_{\substack{\edged \in \edgesdi\\\edged \perp \bfe\ej}} (\mathcal R_\edgesd^{(i,j)} v)_{D_\edged}\ \characteristic_{D_\edged},
  \end{array}
\end{equation}
where,
\[
(\mathcal R_\edgesd^{(i,j)} v)_{D_\edged} =\ \left\{ \begin{array}{l l } \alpha_\edged v_\edge
  + (1-\alpha_\edged) v_{\edge'} \quad  & \mbox{if}\quad  \edged = \edge|\edge' \in \edgesdinti \vspace{10pt} \\
  \alpha_\edged v_\edge \quad & \mbox{if}\quad  \edged \in \edgesd(D_\edge)\cap\edgesdexti \end{array} \right.
\]
and $\alpha_\edged \in [0,1]$ to which we give a meaning below (Lemma \ref{lem-new-b}).
\end{definition}

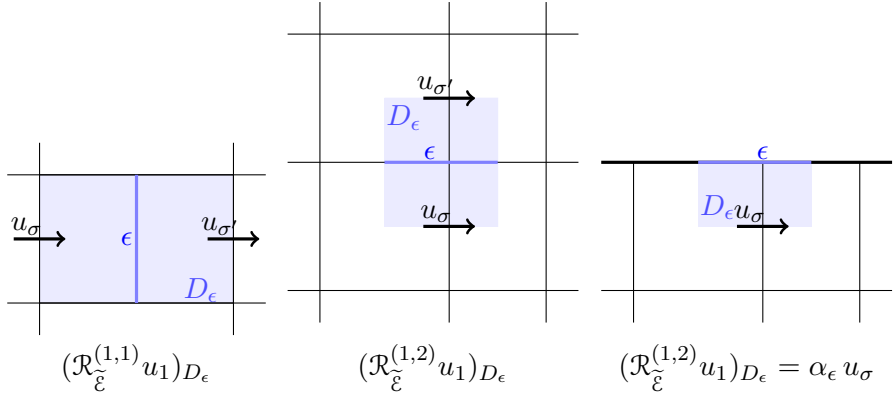
\begin{figure}[htb]
  \begin{center}
\begin{tikzpicture}[scale=0.85]
\centering
\draw[-,fill=blue!8] (0.5,1)--(3.5,1)--(3.5,3)--(0.5,3); \path(3.,1.2) node[blue!70]{$D_\edged$}; 
\draw[-](0,1)--(4,1); \draw[-](0,3)--(4,3); \draw[-](0.5,0.5)--(0.5,3.5); \draw[-](3.5,0.5)--(3.5,3.5); 
\draw[->, very thick](0.1,2)--(0.9,2); \draw[->, very thick](3.1,2)--(3.9,2);
\path(0.3,2.2) node{$u_\edge$}; \path(3.3,2.2) node{$u_{\edge'}$};
\draw[-, very thick, blue!50] (2,1)--(2,3); \path(1.85,2) node[blue] {$\edged$}; 
\path(2,0) node{$(\mathcal R_\edgesd^{(1,1)} u_1)_{D_\edged} $};
\end{tikzpicture}\hspace{5pt}
 \begin{tikzpicture}[scale=0.85]
\draw[-, blue!8, fill=blue!8] (1.5,1)--(3.25,1)--(3.25,3)--(1.5,3); \path(1.8,2.7) node[blue!70]{$D_\edged$}; 
\draw[-](0,0)--(4.5,0); \draw[-](0,2)--(4.5,2); \draw[-](0,4)--(4.5,4);
\draw[-](0.5,-0.5)--(0.5,4.5); \draw[-](2.5,-0.5)--(2.5,4.5); \draw[-](4,-0.5)--(4,4.5); 
\draw[->, very thick](2.1,1)--(2.9,1); \draw[->, very thick](2.1,3)--(2.9,3);
\path(2.3,1.2) node{$u_\edge$}; \path(2.3,3.2) node{$u_{\edge'}$};
\draw[-, very thick, blue!50] (1.5,2)--(3.25,2); \path(2.2,2.15) node[blue] {$\edged$}; 
\path(2.25,-1.2) node{$(\mathcal R_\edgesd^{(1,2)} u_1)_{D_\edged}$};
 \end{tikzpicture}\hspace{5pt}
 \begin{tikzpicture}[scale=0.85]
\draw[-, blue!8, fill=blue!8] (9.5,1)--(11.25,1)--(11.25,2)--(9.5,2); \path(9.8,1.3) node[blue!70]{$D_\edged$}; 
\draw[-](8,0)--(12.5,0); \draw[-, very thick](8,2)--(12.5,2);
\draw[-](8.5,-0.5)--(8.5,2); \draw[-](10.5,-0.5)--(10.5,2); \draw[-](12,-0.5)--(12,2); 
\draw[->, very thick](10.1,1)--(10.9,1); \path(10.3,1.2) node{$u_\edge$};
\draw[-, very thick, blue!50] (9.5,2)--(11.25,2); \path(10.5,2.15) node[blue] {$\edged$}; 
\path(10.25,-1.2) node{$(\mathcal R_\edgesd^{(1,2)} u_1)_{D_\edged} = \alpha_\edged\, u_\edge$};
\end{tikzpicture}
\caption{Reconstruction of the first component of the velocity ($i=1$ and $d=2$).}
\label{fig:recons}    \end{center} \end{figure}

\begin{lemma}[\textbf{Stability of $\bs{\mathcal R_\edgesd^{(i,j)}}$ in $\bs{L^p, p\in [1,\infty)}$}] \label{lem-stab-reconsij}
Let $p\in [1,\infty)$. Then, there exists $C_{\eta_\mesh} \geq 0$ depending only on $\eta_\mesh$
such that for any $v \in \Hmeshizero$ we have,
\[
\|\mathcal R_\edgesd^{(i,j)} v \|_{L^p(\Omega)} \leq C_{\eta_\mesh} \|v\|_{L^p(\Omega)}.
\]
\end{lemma}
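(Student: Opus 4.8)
The plan is to follow the same two steps as in the proof of Lemma~\ref{lem-stab-recons}: apply Jensen's inequality cell by cell, then reorganise the sum so as to collect the contribution of each velocity unknown $v_\edge$. The genuinely new ingredient is that, for $j\neq i$, the $(i,j)$--partition $\{D_\edged,\ \edged\in\edgesdij\}$ does \emph{not} refine the dual partition $\{D_\edge,\ \edge\in\edgesi\}$, so I will also need a volume comparison of the form $|D_\edged|\le C'_{\eta_\mesh}|D_\edge|$ between a dual--dual cell and the (at most two) dual cells $D_\edge$ adjacent to it, with a constant depending only on $\eta_\mesh$.

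First I would fix $p\in[1,\infty)$ and note that, since $\alpha_\edged\in[0,1]$ in both branches of Definition~\ref{def:vel-reco}, the value $(\mathcal R_\edgesd^{(i,j)} v)_{D_\edged}$ is always a convex combination of at most two of the $v_\edge$; hence, by convexity of $t\mapsto|t|^p$, $|(\mathcal R_\edgesd^{(i,j)} v)_{D_\edged}|^p\le\alpha_\edged|v_\edge|^p+(1-\alpha_\edged)|v_{\edge'}|^p$ when $\edged=\edge|\edge'\in\edgesdinti$, and $|(\mathcal R_\edgesd^{(i,j)} v)_{D_\edged}|^p=\alpha_\edged^p|v_\edge|^p\le|v_\edge|^p$ when $\edged\in\edgesd(D_\edge)\cap\edgesdexti$. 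Multiplying each inequality by $|D_\edged|$, summing over the dual--dual cells of the partition and exchanging the order of summation gives
\[
\|\mathcal R_\edgesd^{(i,j)} v\|_{L^p(\Omega)}^p\ \le\ \sum_{\edge\in\edgesinti}|v_\edge|^p\!\sum_{\edged\in S(\edge)}\! w_{\edged,\edge}\,|D_\edged|,
\]
where $S(\edge)$ is the set of dual--dual cells whose reconstruction involves $v_\edge$ and $w_{\edged,\edge}\in[0,1]$ is the corresponding weight ($\alpha_\edged$, $1-\alpha_\edged$, or $\alpha_\edged$ in the exterior case). Since $v_\edge$ appears on $D_\edged$ precisely when $\edged$ is one of the faces of $D_\edge$ orthogonal to $\bfe_j$ (when $j=i$: when $D_\edged$ is one of the two primal cells having $\edge$ as a $\bfe_i$--face), and since $D_\edge$ is a box with exactly two faces orthogonal to $\bfe_j$, the inner sum has at most two terms; everything then reduces to the volume comparison announced above.

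To establish $|D_\edged|\le C'_{\eta_\mesh}|D_\edge|$ I would distinguish the two cases of \eqref{eq:defcelldualdual}. If $j=i$, the $(i,i)$--partition coincides with $\mesh$, so $D_\edged$ is a primal cell $K$ with $\edge\in\edges(K)$ and $|D_\edged|=|K|=2|D_{K,\edge}|\le2|D_\edge|$. If $j\neq i$, both $D_\edged$ and $D_\edge$ are rectangular boxes having the same side length along each axis $\bfe_k$ with $k\neq j$ (namely $d(\bfx_K,\bfx_L)$ along $\bfe_i$ when $\edge=K|L$, and the corresponding side of $\edge$ for $k\neq i,j$), while their widths along $\bfe_j$ are, respectively, $d(\bfx_\edge,\bfx_{\edge'})=\tfrac12(c_j+c_j')$ for $D_\edged$ (or $d(\bfx_\edge,\bfx_{\edge,\edged})=\tfrac12 c_j$ in the exterior case), and $c_j$ for $D_\edge$, where $c_j$ and $c_j'$ denote the $\bfe_j$--widths of the cell slab containing $\edge$ and of the neighbouring slab. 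Hence $|D_\edged|=\tfrac12(1+c_j'/c_j)\,|D_\edge|$, or $\tfrac12|D_\edge|$. Finally, since the mesh is a structured Cartesian grid, $\edge$ and $\edge'$ have equal side lengths along all axes but $\bfe_j$, so $c_j'/c_j=|\edge'|/|\edge|$; choosing any face $\tau\in\edges^{(j)}$ and writing $|\edge'|/|\edge|=(|\edge'|/|\tau|)(|\tau|/|\edge|)$, each factor is a ratio of faces of \emph{distinct} orientations, hence bounded by $\eta_\mesh$ by \eqref{regmesh}, so that $c_j'/c_j\le\eta_\mesh^2$. Thus $|D_\edged|\le C'_{\eta_\mesh}|D_\edge|$ with, e.g., $C'_{\eta_\mesh}=2+\eta_\mesh^2$; the inner sum above is then $\le 2C'_{\eta_\mesh}|D_\edge|$, so $\|\mathcal R_\edgesd^{(i,j)} v\|_{L^p(\Omega)}^p\le 2C'_{\eta_\mesh}\|v\|_{L^p(\Omega)}^p$, and taking $p$--th roots (using $2C'_{\eta_\mesh}\ge1$) yields the statement with $C_{\eta_\mesh}=2C'_{\eta_\mesh}$, independent of $p$.

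The step I expect to be the main obstacle is this last geometric comparison: carrying out the box bookkeeping cleanly in dimension $d=3$, treating the boundary dual cells $\edged\in\edgesdexti$ as a separate case, and — most importantly — noticing that although a ratio of two cell widths measured along the \emph{same} coordinate axis is not controlled directly by $\eta_\mesh$ (which only compares faces of different orientations), it becomes controlled by $\eta_\mesh^2$ after inserting an auxiliary face of the complementary orientation as a pivot.
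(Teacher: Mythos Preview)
Your proof is correct and follows essentially the same strategy as the paper's: a pointwise convexity bound on each $D_\edged$, reorganisation of the sum by velocity unknown $v_\edge$, and a volume comparison $|D_\edged|\le C_{\eta_\mesh}|D_\edge|$ controlled by the mesh regularity. Your version is in fact slightly sharper and more detailed: you use Jensen's inequality where the paper uses the cruder bound $(a+b)^p\le 2^{p-1}(a^p+b^p)$, and you spell out the geometric volume comparison (including the $\eta_\mesh^2$ pivot argument) that the paper simply asserts in one line.
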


\begin{proof}
Using $\alpha_\edged \in [0,1]$ and $(a+b)^p \leq 2^{p-1}(a^p+b^p)$ for $a,b >0$ yields:
\begin{align*}
  \| \mathcal R_\edgesd^{(i,j)}\|^p_{L^p(\Omega)} & \leq  2^{p-1} \sum_{\substack{\edged \in \edgesdinti\\\edged = \edge|\edge'}}
  |D_\edged| (|v_\edge|^p + |v_{\edge'}|^p) + \sum_{\substack{\edged \in \edgesdexti\\\edged \in \edgesd(D_\edge)}} |D_{\edged}| |v_\edge|^p\
\end{align*}
We reorder the sum then use the regularity of the mesh $\eta_\mesh$, to bound the measure $|D_{\edged}|+|D_{\edged'}|$
by $ C_{\eta_\mesh} |D_\edge|$, with $\edged$ and $\edged'$ being the two dual faces of $D_\edge$ orthogonal to $\bfe_j$,
\[
\| \mathcal R_\edgesd^{(i,j)}\|^p_{L^p(\Omega)} \leq  2^{p-1} \sum_{\substack{\edge \in \edgesi\\\edged, \edged' \in \edgesd(D_\edge)}}
\left[|D_\edged| + |D_{\edged'}| \right] |v_\edge|^p \leq  2^{p-1} C_{\eta_\mesh}\|v\|^p_{L^p(\Omega)}.
\]

\end{proof}

\noindent Using the reconstructions by $\mathcal R_\edgesd^{(i,j)}$, we give another useful
formulation of the trilinear form $b_\edges$ as follows:
\begin{lemma}[\textbf{Reformulation of $\bs {b_\edges}$}] \label{lem-new-b}
  Let $i \in \{1,..,d\}$ and $ (\rho,\bfu, \bfv,\bfw) \in L_{\mesh}\times\Hmeshizero^3$.
  Let $b^{(i)}_\edges(\rho\bfu,v_i, w_i)$ be defined by \eqref{def:weak-conv-op-rv}.
  Then there exists three reconstruction operators : $(\mathcal R_\edgesd^{(i,j)})^\rho$, $(\mathcal R_\edgesd^{(i,j)})^u$
  and $(\mathcal R_\edgesd^{(i,j)})^v$ in the sense of Definition \ref{def:vel-reco} such that,
\begin{equation}
 \begin{array}{r l }\label{eq:new-b}
   b^{(i)}_\edges(\rho\bfu,v_i, w_i) = - \ds \sum_{j=1}^d \int_\Omega (\mathcal R_\edgesd^{(j,i)})^\rho \rho_{\edges^{(j)}}
   (\mathcal R_\edgesd^{(j,i)})^u u_j (\mathcal R_\edgesd^{(i,j)})^v v_i\eth_j w_i \dx
 \end{array}
  \end{equation}
where $(\rho_{\edges^{(j)}}) = (\rho_\edge)_{\edge \in {\edges^{(j)}}}$ are the values defined on $\edges^{(j)}$
computed from $\rho \in L_{\mesh}$ using the upwind scheme (see \eqref{eq:divflux}).
\end{lemma}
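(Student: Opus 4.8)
The plan is to start from the duality identity \eqref{dd-conv} and reduce the claim to a purely local, cell-by-cell factorisation. Grouping the dual--dual faces by orientation, \eqref{dd-conv} reads
\[
b^{(i)}_\edges(\rho\bfu,v_i,w_i)=\int_\Omega C^{(i)}_\edges(\rho\bfu)\,v_i\,w_i\dx
= -\sum_{j=1}^d\ \sum_{\substack{\edged\in\edgesdij\\ \edged=\edge|\edge'}} |D_\edged|\,\rho_\edged\,\hat u_\edged\,v_\edged\,(\eth_j w_i)_{D_\edged},
\]
where $(\rho_\edged,\hat u_\edged)$ is given by \eqref{eq:rhoi-ui} and $v_\edged$ by the upwind choice \eqref{dual:unkown-rv}. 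For fixed $j$ the family $(D_\edged)_{\edged\in\edgesdij}$ coincides with $(D_{\edged'})_{\edged'\in\edgesd^{(j,i)}}$, and $\eth_j w_i$ as well as any reconstruction in the sense of Definition \ref{def:vel-reco} is constant on each of those cells; hence it is enough to produce, on each cell, coefficients $\alpha^\rho_\edged,\alpha^u_\edged,\alpha^v_\edged\in[0,1]$ --- these are exactly the data \emph{defining} the three operators $(\mathcal R_\edgesd^{(j,i)})^\rho$, $(\mathcal R_\edgesd^{(j,i)})^u$, $(\mathcal R_\edgesd^{(i,j)})^v$ --- whose reconstructed values multiply to $\rho_\edged\hat u_\edged v_\edged$.

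For $v_i$ I would take $\alpha^v_\edged$ to be the upwind coefficient attached to $F_{\edge,\edged}$ ($\alpha^v_\edged=1$ if $F_{\edge,\edged}\ge 0$, else $0$), so that $\alpha^v_\edged v_\edge+(1-\alpha^v_\edged)v_{\edge'}=v_\edged$; on a boundary dual--dual face the Dirichlet condition gives $v_\edge=0$, the external branch of Definition \ref{def:vel-reco} returns $v_\edged=0$, and the term drops out. The heart of the proof is the factorisation of $\rho_\edged\hat u_\edged$ itself. Reading off \eqref{eq:rhoi-ui}, in \emph{both} the $j=i$ and the $j\ne i$ cases $\rho_\edged$ is a \emph{purely geometric} convex combination of the two neighbouring upwinded face densities (weights $\tfrac12,\tfrac12$ when $j=i$; weights $|\tau|/(|\tau|+|\tau'|)$ and $|\tau'|/(|\tau|+|\tau'|)$ when $j\ne i$), so choosing $\alpha^\rho_\edged$ equal to that geometric weight makes $(\mathcal R_\edgesd^{(j,i)})^\rho\rho_{\edges^{(j)}}$ equal to $\rho_\edged$. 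The leftover factor $\hat u_\edged$ is then, again by \eqref{eq:rhoi-ui}, a \emph{density-weighted} average of the two neighbouring velocities, whose weights ($\rho_\edge/(\rho_\edge+\rho_{\edge'})$ and its complement when $j=i$; $|\tau|\rho_\tau/(|\tau|\rho_\tau+|\tau'|\rho_{\tau'})$ and its complement when $j\ne i$) sum to $1$ and lie in $[0,1]$ \emph{precisely because} $\rho_{\edges^{(j)}}$ is positive, so they define a legitimate reconstruction $(\mathcal R_\edgesd^{(j,i)})^u u_j$ equal to $\hat u_\edged$. With these choices the product of the three reconstructed values equals $\rho_\edged\hat u_\edged v_\edged$ on every cell.

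Combining the three, each summand $|D_\edged|\,\rho_\edged\hat u_\edged\,v_\edged\,(\eth_j w_i)_{D_\edged}$ equals $\int_{D_\edged}(\mathcal R_\edgesd^{(j,i)})^\rho\rho_{\edges^{(j)}}\,(\mathcal R_\edgesd^{(j,i)})^u u_j\,(\mathcal R_\edgesd^{(i,j)})^v v_i\,\eth_j w_i\dx$ (all four functions constant on $D_\edged$, and $|D_\edged|=\int_\Omega\characteristic_{D_\edged}\dx$); summing over $\edged\in\edgesdij$ and then over $j$ yields exactly \eqref{eq:new-b}. The one genuinely delicate point --- and the only place where the specific construction \eqref{eq:flux_dual}--\eqref{eq:rhoi-ui} of the dual mass flux is used in an essential way --- is the recognition that $\rho_\edged$ can be peeled off as a data-independent geometric average, which forces the velocity factor to be automatically convex. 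I expect the real care to lie in keeping the two geometric configurations $j=i$ and $j\ne i$ apart, in matching a face $\edged\in\edgesd^{(i,j)}$ with the face $\edged'=\tau|\tau'\in\edgesd^{(j)}$ carrying the $\rho$-- and $u_j$--degrees of freedom, and in invoking positivity of $\rho_{\edges^{(j)}}$ to keep $\alpha^u_\edged$ inside $[0,1]$.
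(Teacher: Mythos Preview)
Your proof is correct and follows essentially the same route as the paper: both start from the duality identity \eqref{dd-conv} to rewrite $b^{(i)}_\edges$ as a sum of $|D_\edged|\,\rho_\edged\,\hat u_\edged\,v_\edged\,(\eth_j w_i)_{D_\edged}$, and then observe that each of the three factors $\rho_\edged$, $\hat u_\edged$, $v_\edged$ is a convex combination of face values in the sense of Definition \ref{def:vel-reco}. Your write-up is in fact more explicit than the paper's: you spell out the actual coefficients ($\tfrac12$ or $|\tau|/(|\tau|+|\tau'|)$ for $\rho$, the density-weighted average for $\hat u$, the upwind $0/1$ for $v$) and correctly flag that positivity of $\rho_{\edges^{(j)}}$ is what keeps $\alpha^u_\edged\in[0,1]$ --- a point the paper leaves implicit in the phrase ``are convex combinations''.
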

\begin{proof}
  By the definition of $b^{(i)}_\edges(\rho\bfu,v_i, w_i)$ in \eqref{def:weak-conv-op-rv}
  and the duality property of the convective term \eqref{dd-conv} we have,

\begin{equation}
  \begin{array}{r l l}\nonumber
    b^{(i)}_\edges(\rho\bfu,v_i, w_i)  & = - & \ds \sum_{\edged \in  \edgesdij}  | D_\edged |
    (\rho u_j)_{D_\edged} (v_i)_{D_\edged} (\eth_j w_i)_{D_\edged} \vspace{5pt} \\
    & = - & \ds \sum_{j=1}^d \sum_{\substack{\edged \in  \edgesdij\\ \edged=\edge|\edge'}}  | D_\edged |
    \rho_{\edged} u_{\edged}v_{\edged} (\eth_j w_i)_{D_\edged}
  \end{array}
\end{equation}
From their definitions in \eqref{eq:rhoi-ui}, we have that $\rho_{\edged}$ and $u_{\edged}$ are convex
combinations of elements $\in \edges$ lying on the faces of $D_\edged$.
Therefore there exists two reconstruction operators $(\mathcal R_\edgesd^{(j,i)})^\rho$ and
$(\mathcal R_\edgesd^{(j,i)})^u$ such that
$((\mathcal R_\edgesd^{(j,i)})^\rho \rho_{\edges^{(j)}})_{D_\edged} = \rho_\edged$ and
$((\mathcal R_\edgesd^{(j,i)})^u u)_{D_\edged} = u_\edged$. Finally, $v_{\edged} $ is given either
by an interpolation or the upwind scheme $\eqref{dual:unkown-rv}$, hence we also have the
existence of an operator $(\mathcal R_\edgesd^{(i,j)})^v$ verifying
$((\mathcal R_\edgesd^{(i,j)})^v v)_{D_\edged} = v_\edged$ which concludes the reformulation of $b^{(i)}_\edges$.
\end{proof}~\\

Thanks to the reconstruction operators we may derivate the discrete equivalent of the classical
estimates on the trilinear form \cite{NS-temam} and we have,

 \begin{lemma}[\textbf{Estimates on $\bs {b_\edges}$}] \label{lem-estimb}
   There exists a constant $C_{\eta_{\mesh}} >0$, depending only on $\eta_\mesh$ -- as defined
   in \eqref{regmesh} -- such that for any
   $(\rho,\bfu, \bfv,\bfw) \in L_{\mesh}\times\boldsymbol{ E}_{\edges}\times\Hmeshzero^2$ we have,
 \begin{equation}
  \begin{array}{r l }\label{estimb-rho}
    |b_\edges(\rho\bfu, \bfv,\bfw)| & \leq  C_{\eta_{\mesh}}\, \|\rho\|_{L^{\infty}(\Omega)}\,
    \|\bfu\|_{L^4(\Omega)^d}\, \|\bfv\|_{L^4(\Omega)^d} \, \|\bfw\|_{1,\edges,0}  \\\\
    |b_\edges(\rho\bfu, \bfv,\bfw)| & \leq C_{\eta_{\mesh}}\, \|\rho\|_{L^{\infty}(\Omega)}\,
    \|\bfu\|_{1,\edges,0}\, \|\bfv\|_{1,\edges,0} \, \|\bfw\|_{1,\edges,0}.
  \end{array}
   \end{equation}
 \end{lemma}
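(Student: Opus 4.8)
The plan is to derive both inequalities from the reformulation of the trilinear form in Lemma~\ref{lem-new-b}, combined with Hölder's inequality and the $L^p$-stability of the reconstruction operators; the argument is essentially a transcription of the classical estimates on the continuous convection form \cite{NS-temam}, and the divergence-free constraint on $\bfu$ plays no role. Fix $(\rho,\bfu,\bfv,\bfw)\in L_\mesh\times\boldsymbol{E}_\edges\times\Hmeshzero^2$. First I would write, using \eqref{def:weak-conv-op-rv} and then \eqref{eq:new-b},
\[
b_\edges(\rho\bfu,\bfv,\bfw)=\sum_{i=1}^d b^{(i)}_\edges(\rho\bfu,v_i,w_i)= - \sum_{i,j=1}^d \int_\Omega (\mathcal R_\edgesd^{(j,i)})^\rho \rho_{\edges^{(j)}}\;(\mathcal R_\edgesd^{(j,i)})^u u_j\;(\mathcal R_\edgesd^{(i,j)})^v v_i\; \eth_j w_i \dx ,
\]
where, since the $(i,j)$-- and $(j,i)$--partitions coincide, all four factors under a given integral are piecewise constant on the same dual-of-dual cells, so each integral is a finite sum. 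I would then bound each of the $d^2$ summands by Hölder's inequality with exponents $(\infty,4,4,2)$, which gives for the generic term the estimate $\|(\mathcal R_\edgesd^{(j,i)})^\rho \rho_{\edges^{(j)}}\|_{L^\infty(\Omega)}\,\|(\mathcal R_\edgesd^{(j,i)})^u u_j\|_{L^4(\Omega)}\,\|(\mathcal R_\edgesd^{(i,j)})^v v_i\|_{L^4(\Omega)}\,\|\eth_j w_i\|_{L^2(\Omega)}$.

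Next I would bound the four factors. For the density: each upwind value $\rho_\edge$ equals some cell value $\rho_K$, so $\|\rho_{\edges^{(j)}}\|_{L^\infty(\Omega)}\le\|\rho\|_{L^\infty(\Omega)}$; and since a reconstruction operator in the sense of Definition~\ref{def:vel-reco} acts by convex combination on interior dual faces and by multiplication by a coefficient of $[0,1]$ on exterior ones, it is a contraction in $L^\infty$, whence $\|(\mathcal R_\edgesd^{(j,i)})^\rho \rho_{\edges^{(j)}}\|_{L^\infty(\Omega)}\le\|\rho\|_{L^\infty(\Omega)}$ (Lemma~\ref{lem-stab-reconsij} cannot be invoked directly here since it is stated only for $p<\infty$, but this bound is immediate from the definition). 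For the two $L^4$ factors I would apply Lemma~\ref{lem-stab-reconsij} with $p=4$ to get $\|(\mathcal R_\edgesd^{(j,i)})^u u_j\|_{L^4(\Omega)}\le C_{\eta_\mesh}\|\bfu\|_{L^4(\Omega)^d}$ and $\|(\mathcal R_\edgesd^{(i,j)})^v v_i\|_{L^4(\Omega)}\le C_{\eta_\mesh}\|\bfv\|_{L^4(\Omega)^d}$. For the derivative factor, \eqref{norm-rv}--\eqref{gradient-and-innerproduct-rv} give $\sum_{i,j=1}^d\|\eth_j w_i\|_{L^2(\Omega)}^2=\|\bfw\|_{1,\edges,0}^2$, so each single factor is $\le\|\bfw\|_{1,\edges,0}$ and, by Cauchy--Schwarz over the finite index set, $\sum_{i,j}\|\eth_j w_i\|_{L^2(\Omega)}\le d\,\|\bfw\|_{1,\edges,0}$. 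Summing the bounds over $i,j$ and absorbing $dC_{\eta_\mesh}^2$ into the constant yields the first inequality.

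The second inequality then follows from the first by inserting the discrete Sobolev embedding $\|\bfz\|_{L^4(\Omega)^d}\le C_{\eta_\mesh}\|\bfz\|_{1,\edges,0}$ for $\bfz\in\Hmeshzero$ — the discrete analogue of $H^1_0(\Omega)\hookrightarrow L^4(\Omega)$, available since $d\in\{2,3\}$ (see \cite{conv-mac-NS-18}) — applied to $\bfu$ and to $\bfv$. Apart from this, the proof is pure bookkeeping; the two points requiring a little care are the $L^\infty$-contractivity of the reconstruction operators (handled from their convex-combination structure) and the use of the discrete Sobolev inequality for $d\le 3$, which I regard as the main ingredient.
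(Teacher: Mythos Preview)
Your proof is correct and follows essentially the same approach as the paper: reformulate $b_\edges$ via Lemma~\ref{lem-new-b}, apply H\"older's inequality together with the $L^p$-stability of the reconstruction operators (Lemma~\ref{lem-stab-reconsij}) and the convex-combination $L^\infty$ bound on the density factor, then invoke the discrete Sobolev inequality for the second estimate. The only cosmetic difference is that the paper applies H\"older with exponents $(4,4,2)$, grouping the product $(\mathcal R_\edgesd^{(j,i)})^\rho \rho_{\edges^{(j)}}\,(\mathcal R_\edgesd^{(j,i)})^u u_j$ into a single $L^4$ factor before pulling out $\|\rho\|_{L^\infty}$, whereas you split directly with $(\infty,4,4,2)$; both are equivalent.
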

 \begin{proof}
   The result is a direct adaptation of \cite{conv-mac-NS-18}. Let us consider
   $(\rho,\bfu, \bfv,\bfw) \in L_{\mesh}\times\boldsymbol{ E}_{\edges}\times\Hmeshzero^2$.
   For any $i \in \{1,..,d\}$, we use H\"older's inequality on the reformulation of
   $b^{(i)}_\edges(\rho\bfu,v_i, w_i)$ to yield,
\[
\left| b^{(i)}_\edges(\rho\bfu,v_i, w_i)  \right| \le \sum_{j=1}^d\Vert
(\mathcal R_\edgesd^{(j,i)})^\rho \rho_{\edges^{(j)}} (\mathcal R_\edgesd^{(j,i)})^u u
\Vert_{L^4(\Omega)} \Vert (\mathcal R_\edgesd^{(i,j)})^v v \Vert_{L^4(\Omega)^d} \Vert \eth_j w_i \Vert_{L^2(\Omega)}
\]
Using Lemma \ref{lem-stab-reconsij} and the fact that $(\mathcal R_\edgesd^{(j,i)})^\rho \rho_{\edges^{(j)}}$
is obtained by convex interpolations of values of $\rho$ yields the first estimate of \eqref{estimb-rho}.
The second estimate holds thanks to the discrete Sobolev inequality \cite[Lemma 3.5]{FV-book}:
\[
\|\bfv \|_{L^q(\Omega)} \le C(\eta_\mesh,q) \|\bfv\|_{1,\edges,0}   \quad \forall \bfv \in \Hmeshzero \quad \forall q\in [2,6]
\]
which allows us to control the $L^4$ norm by the discrete $\bfH^1_0$ norm.
\end{proof}\\
\noindent Finally,  following \cite{Fortin2012}, let us define the following Fortin
operator:\begin{align} \label{eq:projdual}
& \quad \quad \begin{array}{l|l} \displaystyle
    \widetilde{\Pi}_\edges^{(i)}: \quad & \quad  H^1_0(\Omega) \longrightarrow \Hmeshizero \\ [1ex]
    & \displaystyle \quad v_i \mapsto \widetilde{\Pi}_\edges^{(i)}v_i = \sum_{\edge\in\edgesi}
    \Big(\frac{1}{|\edge|}\int_{\sigma} v_i(\bfx) d\gamma(\bfx)\Big) \characteristic_{D_\edge}
\end{array}
\end{align}
satisfying the following criterias \cite{Fortin2012}: a continuity from $\bfH^1_0(\Omega)$
to  $\Hmeshzero$ and the preservation of the divergence in a sense explained below.
Hence for $\bfv \in \bfH^1_0(\Omega)$, we have,\\
\begin{equation}\label{fortin-prop1}
  \| \widetilde{\Pi}_\edges \bfv  \|_{1,\edges,0} \leq C_{\eta_\mesh} \| \nabla \bfv \|_{L^2(\Omega)^{d\times d}}
\end{equation}\\
where $C_{\eta_\mesh}$ is a constant depending on $\Omega$ and $\eta_\mesh$ (in a non-decreasing way) only.
Additionally $\widetilde{\Pi}_\edges$ verifies a divergence preservation property:
\begin{equation}\label{fortin-prop2}
  \quad \int_\Omega q \dive \bfv \dx = \int_\Omega q \dive_\mesh (\widetilde{\Pi}_\edges \bfv) \dx \qquad \forall q \in L_\mesh
\end{equation}\\
Setting $q = \characteristic_{K}$ for any $K\in \mesh$, an important consequence is:\\
\begin{equation}\label{fortin-prop3}
  \dive_\mesh(\widetilde{\Pi}_\edges \bfv) = \Pi_\mesh( \dive \bfv)
\end{equation}

\section{Estimates and Existence results}
In the present section we aim to prove that a solution to the discrete scheme
(\ref{eq:scheme:ro}) exists at any given time $t^n$. In order to deal with the
nonlinear terms in equations (\ref{eq:mass-rv},\ref{schemevitesse}) we use the
topological degree theory for which we recall some relevant advances.
These results can be found in \cite{nonlinfunctanal} and their  applications
to numerical schemes in \cite{degtopo97,latche-saleh-17,conv-mac-NS-18,degtopo08}\\

First we demonstrate some preliminary results and estimates regarding the solutions
of (\ref{eq:scheme:ro}). The following uniform estimate is a classical consequence
of the upwind choice in the mass equation and of the fact that the velocity is
divergence-free. Then we derive bounds on the velocity and pressure which are
needed for the existence result.\\

\subsection{Estimates on the discrete solutions}

\begin{lemma}[Estimate on the density - discrete maximum principle] \label{lem:estim-density}
Let $\mathcal{D}$ be an admissible MAC discretization of $\Omega$.
For $n\in\{1,\cdots, N-1 \}$, assume $\rho^{n} \in L_\mesh$ is such that
$0<\rho_{\min}\leq\rho^{n}\leq \rho_{\max}$.
If $\rho^{n+1} \in L_\mesh$ and $\bfu^{n+1} \in \Hmesh$ satisfy the discrete
mass balance \eqref{eq:mass-rv} and the divergence constraint \eqref{schemediv},
then
\begin{equation} \label{romin}
\rho_{\min}\leq \rho^{n+1}\leq \rho_{\max}.
\end{equation}\\
Moreover, provided that the discrete viscosity $\mu^{n}\in  L_\mesh$ defined as
in Definition \ref{def:dis:visc} satisfies $0 < \mu_{\min}\leq \mu^{n}\leq \mu_{\max}$,
we have
\begin{equation}\label{mumin}
  \mu_{\min}\leq \mu^{n+1}\leq \mu_{\max}
\end{equation}
\end{lemma}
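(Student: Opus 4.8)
The plan is to exploit the monotonicity built into the upwind mass flux \eqref{eq:divflux}: I would rewrite the discrete mass balance \eqref{eq:mass-rv:dis} cell by cell so that $\rho_K^{n+1}$ appears as a convex combination of $\rho_K^{n}$ and of the neighbouring values $\rho_L^{n+1}$, and then apply the classical finite-volume maximum-principle argument on a cell where $\rho^{n+1}$ attains an extremum. The viscosity bound \eqref{mumin} will then follow immediately from \eqref{romin} and from the continuity and positivity of the constitutive law $\mu$ on a compact interval.

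Concretely, fix $K\in\mesh$ and multiply \eqref{eq:mass-rv:dis} by $|K|$, using $\Fcvedge^{n+1}=\medge\,\rho_\edge^{n+1}\,\ucvedge^{n+1}$, to get $\frac{|K|}{\deltat}(\rho_K^{n+1}-\rho_K^{n})+\sum_{\edge\in\edges(K)}\medge\,\rho_\edge^{n+1}\,\ucvedge^{n+1}=0$. Split $\edges(K)$ according to the sign of $\ucvedge^{n+1}$; the boundary faces, on which the velocity (hence the mass flux) vanishes, do not contribute. On the outflow faces $\ucvedge^{n+1}\geq0$ the upwind choice \eqref{eq:divflux} gives $\rho_\edge^{n+1}=\rho_K^{n+1}$, with total contribution $b_K\,\rho_K^{n+1}$, where $b_K:=\sum_{\edge\in\edges(K),\,\ucvedge^{n+1}\geq0}\medge\,\ucvedge^{n+1}\geq0$; on the inflow faces $\edge=K|L$ with $\ucvedge^{n+1}<0$ it gives $\rho_\edge^{n+1}=\rho_L^{n+1}$, contributing $-\sum a_{K,\edge}\,\rho_L^{n+1}$ with $a_{K,\edge}:=-\medge\,\ucvedge^{n+1}>0$. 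Since the divergence constraint \eqref{schemediv:dis} reads exactly $\sum_{\edge\in\edges(K)}\medge\,\ucvedge^{n+1}=0$, one has $\sum a_{K,\edge}=b_K$, and the mass balance becomes
\[
\Bigl(\tfrac{|K|}{\deltat}+b_K\Bigr)\rho_K^{n+1}
=\tfrac{|K|}{\deltat}\,\rho_K^{n}+\sum_{\edge=K|L,\ \ucvedge^{n+1}<0} a_{K,\edge}\,\rho_L^{n+1}.
\]
On the right the coefficients $\tfrac{|K|}{\deltat}$ and the $a_{K,\edge}$ are non-negative and sum to $\tfrac{|K|}{\deltat}+b_K$, so $\rho_K^{n+1}$ is a convex combination of $\rho_K^{n}$ and the $\rho_L^{n+1}$. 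For the upper bound, choose $K^\star$ with $\rho_{K^\star}^{n+1}=M:=\max_{K\in\mesh}\rho_K^{n+1}$; evaluating the identity at $K^\star$ and bounding $\rho_{K^\star}^{n}\leq\rho_{\max}$ and $\rho_L^{n+1}\leq M$ yields $\bigl(\tfrac{|K^\star|}{\deltat}+b_{K^\star}\bigr)M\leq\tfrac{|K^\star|}{\deltat}\rho_{\max}+b_{K^\star}M$, hence $M\leq\rho_{\max}$. The lower bound is symmetric: taking a cell realizing $\min_{K}\rho_K^{n+1}$ and using $\rho_K^{n}\geq\rho_{\min}$ gives $\rho^{n+1}\geq\rho_{\min}>0$. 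This is \eqref{romin}.

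Finally, for \eqref{mumin}: by \eqref{romin} every nodal value $\rho_K^{n+1}$ lies in the compact interval $[\rho_{\min},\rho_{\max}]$, on which the map $\mu$ from \eqref{pb:cont} is continuous and strictly positive, hence attains a minimum $\mu_{\min}>0$ and a maximum $\mu_{\max}<\infty$; by Definition \ref{def:dis:visc}, $\mu_K^{n+1}=\mu(\rho_K^{n+1})\in[\mu_{\min},\mu_{\max}]$ for all $K\in\mesh$, which is \eqref{mumin} (and, by \eqref{def:muedgedint}--\eqref{def:muedgedext}, the dual-face viscosities $\mu_\edged^{n+1}$ inherit the same bounds, being convex combinations of the $\mu_K^{n+1}$). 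I do not expect a real obstacle; the only delicate point is the bookkeeping in the reformulation step — in particular using the divergence-free constraint to absorb the term $b_K\rho_K^{n+1}$ so that the convex-combination structure, with $\rho_K^{n}$ correctly included among the convex data, becomes apparent. Once that identity is established, the rest is the standard monotone upwind argument.
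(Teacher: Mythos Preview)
Your argument is correct. For the density bound you take a different route from the paper: rather than testing the mass balance against $(\rho_K^{n+1}-\rho_{\max})^{+}$, summing over $K$, and showing via algebraic manipulations (based on $2a(a-b)=a^2+(a-b)^2-b^2$ and a discrete integration by parts over edges) that the flux contribution has the right sign, you rewrite the scheme cell by cell as an M-matrix identity, exhibiting $\rho_K^{n+1}$ explicitly as a convex combination of $\rho_K^{n}$ and the neighbouring $\rho_L^{n+1}$, and then argue at a cell realizing the extremum. Your approach is more elementary and makes the monotone structure of the upwind scheme completely transparent; the key step---using $\dive_\mesh\bfu^{n+1}=0$ to get $\sum a_{K,\edge}=b_K$ so that the coefficients sum correctly---is exactly the discrete analogue of the paper's use of the divergence constraint, just deployed locally rather than globally. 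The paper's test-function technique is slightly heavier here but is the template that recurs later (e.g.\ in the kinetic-energy and weak-$BV$ estimates), so it has pedagogical value in that context. For the viscosity bound \eqref{mumin} both proofs are essentially identical: continuity of $\mu$ on the compact interval $[\rho_{\min},\rho_{\max}]$ via the extreme value theorem.
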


\begin{proof}
Starting with the right side of the inequality \eqref{romin},
the discrete mass conservation equation \eqref{eq:mass-rv} given on $K \in \mesh$
is multiplied by $|K|(\rho_K^{n+1}-\rho_{\max})^{+}$. Summing over cells $K$ yields,
\begin{equation}\label{proof:romin:1}
  \underbrace{\displaystyle\sum_{K\in\mesh} \vphantom{\sum_{\edge\in\edges(K)}} \displaystyle\frac{\vert K\vert}{\deltat}
  (\rho^{n+1}_{K}-\rho_{K}^{n})(\rho_K^{n+1}-\rho_{\max})^{+}}_{\textstyle A}+ \underbrace{\displaystyle\sum_{K\in\mesh}
  \sum_{\edge\in\edges(K)}F_{K,\edge}(\rho_K^{n+1}-\rho_{\max})^{+}}_{\textstyle B} = 0
\end{equation}
The desired inequality is obtained by the first term in the equation. We
focus on the second term, denoted by $B$, which we reformulate using the
divergence constraint \eqref{schemediv:dis} as,

\begin{equation}\nonumber
  B = - \displaystyle\sum_{K\in\mesh}  \sum_{\edge\in\edges(K)}\vert \edge \vert u_{K,\edge}
  ( \rho_{K}^{n+1} -  \rho_{\edge}^{n+1}) (\rho_K^{n+1}-\rho_{\max})^{+}
\end{equation}
Let us denote $\sign(x) = 1$ if $x \geq 0$, and $0$ otherwise. Therefore,
\begin{equation}\nonumber
  B = - \displaystyle\sum_{K\in\mesh}  \sum_{\edge\in\edges(K)} \sign(\rho_K^{n+1}-\rho_{\max})
  \vert \edge \vert u_{K,\edge} (\rho_K^{n+1}-\rho_{\max})
  (( \rho_{K}^{n+1} -\rho_{\max}) -  (\rho_{\edge}^{n+1} -\rho_{\max}) )
\end{equation}
Using $2a(a-b) = a^2 + (a-b)^2 - b^2$ and (\ref{schemediv:dis}) we end up with,
\begin{equation}\nonumber
  B = - \dfrac{1}{2} \displaystyle\sum_{K\in\mesh}  \sum_{\edge\in\edges(K)}
  \sign(\rho_K^{n+1}-\rho_{\max})\vert \edge \vert u_{K,\edge}
  (( \rho_{K}^{n+1} -\rho_{\edge}^{n+1})^2 -  (\rho_{\edge}^{n+1} -\rho_{\max})^2 )
\end{equation}
We perform a discrete integration by parts by rearranging the sum over the edges,
\begin{align*}
  B = & - \dfrac{1}{2} \displaystyle\sum_{\substack{\edge\in\edges\\\edge = K | L}}
  \vert \edge \vert u_{K,\edge} \Big( \sign(\rho_K^{n+1}-\rho_{\max})
  (( \rho_{K}^{n+1} -\rho_{\edge}^{n+1})^2 -  (\rho_{\edge}^{n+1} -\rho_{\max})^2 ) \\ \vspace{-15pt}
& \qquad \qquad \qquad  \qquad\qquad- \sign(\rho_L^{n+1}-\rho_{\max})
  (( \rho_{L}^{n+1} -\rho_{\edge}^{n+1})^2 -  (\rho_{\edge}^{n+1} -\rho_{\max})^2 )  \Big)
\end{align*}
This term vanishes if $\rho_{K}^{n+1},\rho_{L}^{n+1} \leq\rho_{\max}$ and positive
if $\rho_{K}^{n+1},\rho_{L}^{n+1} \geq\rho_{\max}$ (owing to the assumption that
the upwind scheme is used for the density). In the remaining cases, using the
upwind scheme assumption and the order between the values
$\rho_{K}^{n+1},\rho_{L}^{n+1}$ and $\rho_{\max}$ we show that the term is positive.
Thus we conclude with the negativity of $A$ in \eqref{proof:romin:1} which in turn
yields that,

\begin{equation}\nonumber
  \displaystyle\sum_{K\in\mesh} \displaystyle\frac{\vert K\vert}{\deltat}
  (\rho^{n+1}_{K}-\rho_{\max})(\rho_K^{n+1}-\rho_{\max})^{+} \leq \displaystyle\sum_{K\in\mesh}
  \displaystyle\frac{\vert K\vert}{\deltat}
  (\rho_{K}^{n}-\rho_{\max})(\rho_K^{n+1}-\rho_{\max})^{+} \leq 0
\end{equation}
Therefore
$(\rho_{K}^{n+1}-\rho_{\max})(\rho_K^{n+1}-\rho_{\max})^{+} \leq 0 \quad \forall K \in \mesh$,
and  $\rho_K^{n+1} \leq \rho_{\max}$. \\\\ Using similar arguments, the mass
conservation equation is multiplied by $|K|(\rho_K^{n+1}-\rho_{\min})^{-}$ to
conclude with $\rho_K^{n+1} \geq \rho_{\min}$. Finally, the extreme value
theorem combined with $\rho^{n},\rho^{n+1} \in [\rho_{\min}, \rho_{\max}]$
and $\mu^{n} \in [\mu_{\min}, \mu_{\max}]$ yield the bound  $\mu_{\min}\leq \mu^{n+1}\leq \mu_{\max}$. ~\\
\textcolor{white}{?}  \end{proof}

\begin{remark}
  As a consequence, the estimate \eqref{mumin} holds for the discrete
  viscosity $\mu^{n+1}_{ij}$ by being a reconstruction of $\mu^{n+1}$ \eqref{def:muedgesd}--\eqref{def:muedgedext}.
\end{remark}

\begin{remark}
  The result still holds for non-homogeneous boundary conditions under the assumption
  that $\rho_{\edge}^{n+1}$ is given by $\rho_{in}^{n+1}$ on $\Gamma_{in} \neq \emptyset$
  and verifies $\rho_{\min} \leq \rho_{in}^{n+1} \leq \rho_{\max}$.
\end{remark}

From the discrete mass balance equation we also derive the following estimates,
including a weak $BV$ inequality. Those bounds will be required to deal with the
density variations in the convergence proof. In the case of nonlinear flux functions,
we may still be able to bound the total variation of the density using Lemma 5.5 from \cite{FV-book}.

\begin{lemma}[Weak \texorpdfstring{$\bs{BV}$}{d} estimate for the density]
  Any solution $(\rho,\bfu,p)$ to the discrete scheme \eqref{eq:scheme:dis}
  satisfies the following equality, for all $K\in\mesh$ and $0 \leq n \leq N-1$:
\begin{equation} \label{eq:roka}
\frac{\vert K\vert}{2\deltat} \bigl[(\rho_{K}^{n+1})^{2}-(\rho_{K}^{n})^{2}\bigr]
+\frac 1 2 \sum_{\edge\in\edges(K)}\vert \edge\vert\, (\rho_{\edge}^{n+1})^2\, u_{K,\edge}^{n+1}
+\mathcal{R}_K^{n+1}=0,
\end{equation}
with the remainder term $\mathcal{R}_K^{n+1}$ given by,
\begin{equation} \label{eq:rk}
\mathcal{R}_{K}^{n+1}=\frac{\vert K\vert}{2\deltat} \bigl(\rho_{K}^{n+1}-\rho_{K}^{n}\bigr)^2
- \frac 1 2 \sum_{\edge\in\edges(K)} \vert \edge\vert\, \bigl(\rho_{\edge}^{n+1}-\rho_{K}^{n+1}\bigr)^2
\ u_{K,\edge}^{n+1}.
\end{equation}
As a consequence, we get that
\begin{equation} \label{eq:rhobv}
\frac 1 2 \sum_{K\in\mesh} \vert K\vert (\rho_{K}^{n+1})^{2}
+\frac {\deltat }{2} \sum_{\substack{\edge\in\edgesint \\ \edge=K|L} }
      \vert \edge\vert\, (\rho_{L}^{n+1}-\rho_{K}^{n+1})^{2}\ \vert u_{K,\edge}^{n+1}\vert
+ \mathcal{R}_{\rho}^{n+1}=\frac 1 2 \sum_{K\in\mesh}\vert K \vert (\rho_{K}^{n})^2,
\end{equation}
where $\displaystyle \mathcal{R}_{\rho}^{n+1}=\frac{1}{2}
\sum_{K\in\mesh}\vert K \vert (\rho_{K}^{n+1}- \rho_{K}^{n})^2 \ge 0$.
Thus, the following weak $BV$ estimate holds:
\begin{equation} \label{eq:bvweak}
\sum_{n=1}^N \delta t  \sum_{\substack{\edge \in \edgesint \\ \edge=K|L}}
|\edge|\, \bigl(\rho_L^{n} -\rho_K^{n}\bigr)^2\  |u_{K,\edge}^{n}| \leq C,
\end{equation}
where  $C\ge 0$ depends only on the initial data $\|\rho_0 \|_{L^2}$.
\end{lemma}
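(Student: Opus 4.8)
The plan is to first establish the local (per-cell) identity \eqref{eq:roka} by an energy-type manipulation of the discrete mass balance, then to sum it over the mesh to obtain \eqref{eq:rhobv}, and finally to telescope in time to reach the weak $BV$ bound \eqref{eq:bvweak}.

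I would start from the discrete mass balance \eqref{eq:mass-rv:dis} on a cell $K\in\mesh$, namely $\frac{|K|}{\deltat}(\rho_K^{n+1}-\rho_K^n)+\sum_{\edge\in\edges(K)}|\edge|\,\rho_\edge^{n+1}u_{K,\edge}^{n+1}=0$, and multiply it by $\rho_K^{n+1}$. For the discrete time derivative I would use the identity $a(a-b)=\tfrac12\bigl(a^2-b^2+(a-b)^2\bigr)$ with $a=\rho_K^{n+1}$, $b=\rho_K^n$. For each flux term I would use $ab=\tfrac12\bigl(a^2+b^2-(a-b)^2\bigr)$ with $a=\rho_K^{n+1}$, $b=\rho_\edge^{n+1}$; the term $\tfrac12(\rho_K^{n+1})^2\sum_{\edge\in\edges(K)}|\edge|u_{K,\edge}^{n+1}$ it produces vanishes by the discrete divergence constraint \eqref{schemediv:dis}. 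Collecting the remaining terms gives exactly \eqref{eq:roka} with $\mathcal{R}_K^{n+1}$ as in \eqref{eq:rk}.

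Next I would multiply \eqref{eq:roka} by $\deltat$ and sum over $K\in\mesh$. The flux term $\sum_K\sum_{\edge\in\edges(K)}|\edge|(\rho_\edge^{n+1})^2u_{K,\edge}^{n+1}$ vanishes: on an interior edge $\edge=K|L$ the upwind value $\rho_\edge^{n+1}$ is the same seen from $K$ and from $L$ while $u_{K,\edge}^{n+1}=-u_{L,\edge}^{n+1}$ by local conservativity \eqref{eq:conserv}, and on exterior edges $u_{K,\edge}^{n+1}=0$ by the homogeneous Dirichlet condition. The time part of $\deltat\sum_K\mathcal{R}_K^{n+1}$ equals $\mathcal{R}_\rho^{n+1}=\tfrac12\sum_K|K|(\rho_K^{n+1}-\rho_K^n)^2\ge0$. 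The remaining part, $-\tfrac{\deltat}{2}\sum_K\sum_{\edge\in\edges(K)}|\edge|(\rho_\edge^{n+1}-\rho_K^{n+1})^2u_{K,\edge}^{n+1}$, is where the upwind choice \eqref{eq:divflux} is crucial: on $\edge=K|L$, if $u_{K,\edge}^{n+1}\ge0$ then $\rho_\edge^{n+1}=\rho_K^{n+1}$, so the $K$-contribution vanishes and only the $L$-contribution $|\edge|(\rho_L^{n+1}-\rho_K^{n+1})^2|u_{K,\edge}^{n+1}|$ survives, and symmetrically when $u_{K,\edge}^{n+1}<0$; summing over interior edges and using $u_{K,\edge}^{n+1}=0$ on the boundary turns this term into $+\tfrac{\deltat}{2}\sum_{\edge=K|L}|\edge|(\rho_L^{n+1}-\rho_K^{n+1})^2|u_{K,\edge}^{n+1}|$. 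Assembling these contributions yields \eqref{eq:rhobv}.

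Finally I would sum \eqref{eq:rhobv} over $n=0,\dots,N-1$: the energies $\tfrac12\sum_K|K|(\rho_K^n)^2$ telescope, leaving $\tfrac12\sum_K|K|(\rho_K^N)^2$ plus the accumulated dissipation equal to $\tfrac12\sum_K|K|(\rho_K^0)^2$. Dropping the nonnegative terms $\tfrac12\sum_K|K|(\rho_K^N)^2$ and $\sum_n\mathcal{R}_\rho^{n+1}$, using the $L^2$-stability of $\Pi_\mesh$ (the Remark following \eqref{eq:projprim}) to bound $\sum_K|K|(\rho_K^0)^2=\|\rho^0\|_{L^2(\Omega)}^2\le\|\rho_0\|_{L^2(\Omega)}^2$, and reindexing $n+1\mapsto n$, I obtain \eqref{eq:bvweak} with $C=\|\rho_0\|_{L^2(\Omega)}^2$. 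The only genuinely delicate point is the bookkeeping of the upwind value on each interior edge in the previous step, which is what converts the signed flux contribution into the absolute-value weak-$BV$ sum; the rest is telescoping and elementary algebraic identities.
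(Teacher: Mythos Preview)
Your proof is correct and follows essentially the same route as the paper: multiply the discrete mass balance on $K$ by $\rho_K^{n+1}$, use the two quadratic identities for the time and flux terms together with $\dive_\mesh\bfu^{n+1}=0$ to obtain \eqref{eq:roka}--\eqref{eq:rk}, then sum over $K$ (where local conservativity kills the $(\rho_\edge^{n+1})^2$ flux and the upwind choice converts the remaining flux contribution into the positive weak-$BV$ sum), and finally telescope in $n$. Your account of the upwind bookkeeping and of the final telescoping is in fact a bit more explicit than the paper's.
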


\begin{proof}
Let us multiply the discrete mass balance equation \eqref{eq:mass-rv}
by $\vert K\vert \rho_{K}^{n+1}$.
We deal with the discrete time derivative term by using the identity
$2(a^{2}-ab) = (a^{2}-b^{2})+(a-b)^{2}$ with $a = \rho_K^{n+1}$
and $b =\rho_K^{n}$. We then apply the identity $2 ab = a^{2} + b^{2} - (a-b)^{2}$
with $a =\rho_K^{n+1}$ and $b =\rho_\edge^{n+1}$ for the primal mass fluxes in
the second term. We obtain \eqref{eq:roka}--\eqref{eq:rk} by noting that,
\[
\sum_{\edge\in\edges(K)}\vert \edge\vert (\rho_{K}^{n+1})^{2}u_{K,\edge}^{n+1} = (\rho_{K}^{n+1})^{2}
(\dive_\mesh \bfu)_K = 0.
\]
We then sum \eqref{eq:roka} over the cells $K\in \mesh$. This yields all
the terms of \eqref{eq:rhobv} but the second term. The latter is obtained from
reformulating the sum in the second term from $\sum_{M\in\mesh} \mathcal{R}_{K}^{n+1}$
to obtain,
\[
- \frac 1 2 \sum_{\substack{\edge \in \edgesint \\ \edge=K|L}} \vert \edge\vert\,
\left(\bigl(\rho_{\edge}^{n+1}-\rho_{K}^{n+1}\bigr)^2 - \bigl(\rho_{\edge}^{n+1}-\rho_{L}^{n+1}\bigr)^2 \right)
\ u_{K,\edge}^{n+1}
\]
and using the definition of the upwind approximation of $\rho_{\edge}$. Thanks
to the boundary conditions on $\bfu$, the terms on $\edge \in \edgesext$ vanish.
Finally, the second term in \eqref{eq:roka} vanishes from the local conservativity.
The last estimate -- the weak $BV$ estimate  -- is obtained by summing
\eqref{eq:rhobv} over $n$.
\end{proof}\\

\begin{lemma}[Discrete \texorpdfstring{$\bs{L^2(H_0^1) / L^{\infty}(L^{2})}$}{d} velocity estimates] \label{lem:est-vit-ro}
There exists $C > 0$ depending only on $\bfuini$, $\rho_0$, $\mu_0$ and $\bff$
such that, for any function $\bfu\in \bfX_{\edges,\deltat}$ satisfying \eqref{eq:scheme:ro},
the following estimates hold:
\begin{align} \label{estiLdeux:ro} &
\|\bfu\|^2_{L^{2}(0,T;\Hmeshzero)} = \sum_{n=0}^{N-1} \deltat\ \Vert \bfu^{n+1} \Vert_{1,\edges,0}^2 \leq C,
\\ \label{estiLinfiny:ro} &
\|\bfu\|_{L^{\infty}(0,T;L^{2}(\Omega)^{d})} = \max_{0\leq n \leq N-1} \Vert \bfu^{n+1} \Vert_{L^{2}(\Omega)^{d}} \leq C.
\end{align}
\end{lemma}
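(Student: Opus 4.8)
The plan is to derive a discrete kinetic energy estimate by testing the discrete momentum balance with the velocity itself. Since any solution of \eqref{eq:scheme:ro} satisfies the divergence constraint \eqref{schemediv}, the function $\bfu^{n+1}$ is discretely divergence--free, so it is an admissible test function in the weak formulation \eqref{qdm:weak}; equivalently, one may take $\bfv=\bfu^{n+1}$ in \eqref{qdm:weakp} and note that the pressure contribution vanishes, $\int_\Omega\nabla_\edges p^{n+1}\cdot\bfu^{n+1}\dx=-\int_\Omega p^{n+1}\,\dive_\mesh\bfu^{n+1}\dx=0$, by the duality identity \eqref{eq:duality} and \eqref{schemediv}. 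This gives, for each $n\in\{0,\dots,N-1\}$,
\[
\int_\Omega\eth_t(\rho\bfu)^{n+1}\cdot\bfu^{n+1}\dx+b_\edges(\rho^{n+1}\bfu^{n+1},\bfu^{n+1},\bfu^{n+1})+\int_\Omega\mu^{n+1}_\edgesd\mathbf{D}_\edgesd(\bfu^{n+1}):\mathbf{D}_\edgesd(\bfu^{n+1})\dx=\int_\Omega\bff_\edges^{n+1}\cdot\bfu^{n+1}\dx.
\]

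The heart of the argument is the discrete kinetic energy identity for the first two terms. Setting
\[
E^n:=\frac12\sum_{i=1}^d\sum_{\edge\in\edgesi}|D_\edge|\,\rho_{D_\edge}^n\,(u_\edge^n)^2,
\]
with $\rho_{D_\edge}^n$ defined by \eqref{rhovitesse}, I claim that
\[
\int_\Omega\eth_t(\rho\bfu)^{n+1}\cdot\bfu^{n+1}\dx+b_\edges(\rho^{n+1}\bfu^{n+1},\bfu^{n+1},\bfu^{n+1})=\frac1\deltat\bigl(E^{n+1}-E^n\bigr)+\mathcal{R}^{n+1},\qquad\mathcal{R}^{n+1}\ge0.
\]
To see this one works dual cell by dual cell: on $D_\edge$ expand the time term via $2a(a-b)=a^2-b^2+(a-b)^2$, which produces a cross term $\tfrac1\deltat(r_1-r_0)u_1^2$ with $r_1=\rho_{D_\edge}^{n+1}$, $r_0=\rho_{D_\edge}^{n}$, $u_1=u_\edge^{n+1}$; this term is then rewritten with the discrete mass balance on the dual cells \eqref{mass-dual}, which holds precisely because the dual fluxes $F_{\edge,\edged}$ were built according to \eqref{eq:flux_dual} and $\rho_{D_\edge}$ according to \eqref{rhovitesse}. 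Combining with the convection flux and the upwind choice \eqref{dual:unkown-rv} for $u_\edged$, and reorganising the sum over the dual faces $\edged=\edge\vert\edge'$ via the local conservativity $F_{\edge,\edged}=-F_{\edge',\edged}$ together with the homogeneous boundary conditions, the remaining cross terms collapse into the nonnegative quantities $\tfrac{r_0}{2\deltat}(u_1-u_0)^2$ and $\tfrac12\sum_{\edged=\edge\vert\edge'}|F_{\edge,\edged}|(u_\edge-u_{\edge'})^2$, whose sum is $\mathcal{R}^{n+1}$; this is the componentwise version of the kinetic energy computations of \cite{her-10-kin,Latche14,conv-mac-NS-18,latche-saleh-17}. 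Only the bound $\mathcal{R}^{n+1}\ge0$ is needed below.

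Next I would bound the remaining two terms. By Lemma~\ref{lem:estim-density} (and the Remark following it) the discrete viscosity satisfies $\mu^{n+1}_\edgesd\ge\mu_{\min}>0$, hence the discrete Korn inequality \eqref{eq:korn} gives
\[
\int_\Omega\mu^{n+1}_\edgesd\mathbf{D}_\edgesd(\bfu^{n+1}):\mathbf{D}_\edgesd(\bfu^{n+1})\dx\ \ge\ \mu_{\min}\,\|\mathbf{D}_\edgesd(\bfu^{n+1})\|_{L^2(\Omega)}^2\ \ge\ \frac{\mu_{\min}}2\,\|\bfu^{n+1}\|_{1,\edges,0}^2,
\]
while the $L^2$--continuity of $\Pi_\edges$, the discrete Poincar\'e inequality \eqref{eq:poincare} and Young's inequality yield
\[
\int_\Omega\bff_\edges^{n+1}\cdot\bfu^{n+1}\dx\ \le\ \diam(\Omega)\,\|\bff(t^{n+1},\cdot)\|_{L^2(\Omega)^d}\,\|\bfu^{n+1}\|_{1,\edges,0}\ \le\ \frac{\diam(\Omega)^2}{\mu_{\min}}\,\|\bff(t^{n+1},\cdot)\|_{L^2(\Omega)^d}^2+\frac{\mu_{\min}}4\,\|\bfu^{n+1}\|_{1,\edges,0}^2.
\]
Inserting the last three displays into the energy balance, discarding $\mathcal{R}^{n+1}\ge0$, multiplying by $\deltat$ and summing over $n=0,\dots,m-1$ for an arbitrary $m\in\{1,\dots,N\}$ telescopes the $E$--terms and gives
\[
E^m+\frac{\mu_{\min}}4\sum_{n=0}^{m-1}\deltat\,\|\bfu^{n+1}\|_{1,\edges,0}^2\ \le\ E^0+\frac{\diam(\Omega)^2}{\mu_{\min}}\sum_{n=0}^{N-1}\deltat\,\|\bff(t^{n+1},\cdot)\|_{L^2(\Omega)^d}^2\ =:\ C_0.
\]

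To conclude, observe that $\rho^0=\Pi_\mesh\rho_0\in[\rho_{\min},\rho_{\max}]$ and $\|\bfu^0\|_{L^2(\Omega)^d}\le\|\bfu_0\|_{L^2(\Omega)^d}$ by the $L^p$--stability of the projections, so $E^0\le\tfrac{\rho_{\max}}2\|\bfu_0\|_{L^2(\Omega)^d}^2$; moreover $\sum_{n}\deltat\|\bff(t^{n+1},\cdot)\|_{L^2(\Omega)^d}^2$ is controlled by a quantity depending only on $\bff$ and $T$, so $C_0$ depends only on $\bfuini$, $\rho_0$, $\mu_0$ (through $\mu_{\min}$, by Lemma~\ref{lem:estim-density}) and $\bff$. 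On the other hand $\rho_{D_\edge}^m$ is a convex combination of values in $[\rho_{\min},\rho_{\max}]$ by \eqref{rhovitesse} and \eqref{romin}, so $E^m\ge\tfrac{\rho_{\min}}2\|\bfu^m\|_{L^2(\Omega)^d}^2$. Dropping the nonnegative $H^1_0$--sum and taking the maximum over $m$ then yields \eqref{estiLinfiny:ro}, and dropping $E^N\ge0$ with $m=N$ yields \eqref{estiLdeux:ro}. The only genuinely delicate point is the kinetic energy identity of the second step: it is the one place where the specific constructions \eqref{rhovitesse} and \eqref{eq:flux_dual} of the dual density and dual fluxes are actually exploited, and where one must keep careful track of the sign of the remainder; everything else is a routine assembly of the duality and functional--inequality lemmas established above.
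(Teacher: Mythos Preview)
Your proof is correct and follows essentially the same route as the paper's: test the discrete momentum equation with $\bfu^{n+1}$, use the dual mass balance \eqref{mass-dual} together with the algebraic identity $2a(a-b)=a^2-b^2+(a-b)^2$ to obtain a discrete kinetic energy identity with a nonnegative remainder, then apply the Korn inequality \eqref{eq:korn}, the Poincar\'e inequality \eqref{eq:poincare} and Young's inequality to absorb the source term, and finally telescope in time. The only noteworthy difference is that you handle the convection contribution via the upwind choice \eqref{dual:unkown-rv}, producing the additional nonnegative dissipation $\tfrac12\sum_{\edged}|F_{\edge,\edged}|(u_\edge-u_{\edge'})^2$ in $\mathcal{R}^{n+1}$, whereas the paper's write-up treats it as if a centered flux were used so that this contribution vanishes exactly after summation by conservativity; your treatment is in fact the one consistent with the scheme as stated, and either way only the sign of the remainder is needed.
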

\begin{proof}
  We start from \eqref{schemevitesse:dis} to derivate a discrete kinetic energy
  balance. First, we multiply \eqref{schemevitesse:dis} by $(u^{n+1}_{\edge})$,
  for $i = 1,\dots d$, $\forall \edge \in \edgesinti$ and rearrange the resulting
  relation to obtain,\\

  \begin{multline*}
  \Big(\dfrac{1}{\deltat } (\rho_{D_{\edge}}^{n+1} - \rho_{D_{\edge}}^{n} ) + \dfrac{1}{|D_\edge|}
  \sum_{\edged\in\edges(D_\sigma)} F^{n+1}_{\edge,\edged} \Big) (u_{\edge}^{n+1})^{2} \\
  + \dfrac{1}{\deltat } \rho_{D_{\edge}}^{n} u_{\edge}^{n+1} (u_{\edge}^{n+1} - u_{\edge}^{n})
  - \dfrac{1}{|D_\edge|} \sum_{\edged\in\edges(D_\sigma)}  F^{n+1}_{\edge,\edged} u_{\edge}^{n+1} (u_{\edge}^{n+1} - u_{\edged}^{n+1}) \\
  - (\dive_\edges(\mu^{n+1}_\edgesd\mathbf{D}_\edgesd(\bfu^{n+1})))_{D_\edge}u_{\edge}^{n+1}
  + (\nabla \ p^{n+1})_{D_\edge}u_{\edge}^{n+1} = f_{\edge}^{n+1}u_{\edge}^{n+1}
\end{multline*}

The first term vanishes since it can be expressed as \eqref{eq:mass-rv:dis}
averaged over $K$ and $L$ for $\sigma = K | L$, assuming a suitable choice for
$\rho_{D_{\edge}}^{k}$. For the second and third terms we use $2ab = a^2 + (a-b)^2 -b^2$.
We use \eqref{eq:mass-rv:dis} once again to deal with the term
$ - \frac{1}{2}\sum_{\edged\in\edges(D_\sigma)} F^{n+1}_{\edge,\edged}  (u_{\edge}^{n+1})^{2}$.
Thanks to the approximations
$u^{n+1}_{\edged} = (u^{n+1}_\edge + u^{n+1}_{\edge'})/2$ when $\edged = \edge | \edge'$,
the term $ (u_{\edge}^{n+1} - u_{\edged}^{n+1})^2 - (u_{\edged}^{n+1})^2$ yields
$-u_{\edge}^{n+1}u_{\edge'}^{n+1}$. We are left with the discrete kinetic energy identity,

\begin{multline}\label{discrete-kinetic-nrg}
  \dfrac{1 }{2 \deltat | D_{\edge}|} \Big( \rho_{D_{\edge}}^{n+1} (u_{\edge}^{n+1})^2
  + \rho_{D_{\edge}}^{n}(u_{\edge}^{n+1}-u_{\edge}^{n})^2 - \rho_{D_{\edge}}^{n} (u_{\edge}^{n})^2 \Big)
  - \dfrac{1}{2 |D_\edge|} \sum_{\edged\in\edges(D_\sigma)}  F^{n+1}_{\edge,\edged} u_{\edge}^{n+1} u_{\edge '}^{n+1}\\
  - (\dive_\edges(\mu^{n+1}_\edgesd\mathbf{D}_\edgesd(\bfu^{n+1})))_{D_\edge}u_{\edge}^{n+1}
  + (\nabla \ p^{n+1})_{D_\edge}u_{\edge}^{n+1} = f_{\edge}^{n+1}u_{\edge}^{n+1}
\end{multline}

Let us sum \eqref{discrete-kinetic-nrg} over the faces $\edge\in\edgesinti$ for
$i=1,\ldots,d$ and over time for $n = 0,\ldots,M$ with $M \leq N-1$. The convection
term can be rearranged as,
\[
\sum_{n=0}^{M} \sum_{i=1}^{d} \sum_{\edged\in\edgesdinti} (F^{n+1}_{\edge,\edged}+F^{n+1}_{\edge ',\edged}) u_{\edge}^{n+1} u_{\edge '}^{n+1}
\]
and thus vanishes by conservativity of the numerical flux along $\edged = \edge | \edge'$.
The duality property \eqref{eq:duality-dual} combined with the Korn inequality \eqref{eq:korn}
allows us to retrieve the $\Hmeshzero$ norm of $\bfu^{n+1}$ in the third term.
The pressure term vanishes thanks to \eqref{eq:duality} and \eqref{schemediv}.
Finally, from the positivity of the density we have $\rho_{D_{\edge}}^{n}(u_{\edge}^{n+1}-u_{\edge}^{n})^2 \geq 0$.
Hence we get,
\begin{multline*}
 \frac{1}{2} \sum_{i=1}^{d} \sum_{\edge\in\edgesinti} \vert D_{\edge}\vert \rho_{D_{\edge}}^{(M+1)}(u_{\edge}^{(M+1)})^{2}
 -  \frac{1}{2} \sum_{i=1}^{d} \sum_{\edge\in\edgesinti} \vert D_{\edge}\vert  \rho_{D_{\edge}}^{(0)}(u_{\edge}^{(0)})^{2}\\
 + \mu_{min}\frac{\deltat}{2} \sum_{n=0}^{M}\|\bfu^{n+1}\|_{1,\edges,0}^{2}
\le \sum_{n=0}^{M}\sum_{i=1}^{d} \sum_{\edge\in\edgesi}\deltat \vert D_{\edge}\vert f_{\edge}^{n+1} u_{\edge}^{n+1}.
\end{multline*}
By Lemma \ref{lem:estim-density} and thanks to the Cauchy-Schwarz, discrete
Poincar\'e and Young inequalities, we then get the existence of $C>0$ depending
only on $\Omega$ and $\mu_{min}$ such that
\[
\frac{\mu_{min}}{4} \sum_{n=0}^{M} \deltat\ \|\bfu^{n+1}\|_{1,\edges,0}^{2}+  \rho_{\min}\|\bfu^{(M+1)}\|_{L^{2}(\Omega)}^{2}
\leq  \rho_{\max}\|\bfu_0\|_{L^{2}(\Omega)}^{2}  + C \, \|\bff\|_{L^{2} (0,T;L^{2}(\Omega)^{d})}^{2}.
\]
On one hand, we can conclude with the $L^{\infty} (L^{2} )$ estimate
\eqref{estiLinfiny:ro} ; on the other hand, taking $M=N-1$  yields the
$L^{2} (\Hmeshzero)$ estimate \eqref{estiLdeux:ro}.
 \end{proof}\\

\medskip
\begin{lemma}[Non-uniform bound on the pressure]
\label{lem:est:pressure:ro}
Let $(\rho^{n},\bfu^{n},p^{n})\in L_{\mesh}\times\Hmeshzero\times L_{\mesh,0}$
be given and assume that $(\rho^{n+1},\,\bfu^{n+1},\, p^{n+1})\in L_{\mesh}\times\Hmeshzero\times L_{\mesh,0}$ satisfies
  \eqref{eq:scheme:ro}.
  Then there exists a constant $C > 0$ depending only on $\rho_{0}$, $\mu_{0}$,
  $\deltat$, $\eta_{\mesh}$, $\bff$, and $\Omega$ such that:
  \begin{equation}
    \label{es:pres}
    \|p^{n+1}\|_{L^{2}(\Omega)}\leq C
  \end{equation}
\end{lemma}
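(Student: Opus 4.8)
The plan is to use the discrete inf--sup stability of the MAC scheme, which here is realized through the Fortin operator $\widetilde{\Pi}_\edges$ introduced in \eqref{eq:projdual}--\eqref{fortin-prop3}. The starting point is the pressure weak formulation \eqref{qdm:weakp}: for any $\bfv \in \Hmeshzero$,
\[
\int_\Omega \nabla_\edges p^{n+1} \cdot \bfv \dx
= \int_\Omega \bff_{\edges}^{n+1} \cdot \bfv \dx
- \int_\Omega \eth_t(\rho\bfu)^{n+1} \cdot \bfv \dx
- b_\edges(\rho^{n+1}\bfu^{n+1}, \bfu^{n+1}, \bfv)
- \int_\Omega \mu^{n+1}_\edgesd \mathbf{D}_\edgesd(\bfu^{n+1}) : \mathbf{D}_\edgesd(\bfv) \dx.
\]
First I would fix an arbitrary continuous velocity field $\bar\bfv \in H^1_0(\Omega)^d$ with $\dive\bar\bfv = 0$ chosen by the continuous inf--sup (Nečas) lemma so that $\int_\Omega p^{n+1}\,\dive\bar\bfv\dx$... wait — since $p^{n+1}$ is only piecewise constant and divergence-free test functions would kill it, instead I take $\bar\bfv \in H^1_0(\Omega)^d$ (not divergence-free) realizing the continuous surjectivity of the divergence: there exists $\bar\bfv$ with $\dive\bar\bfv = p^{n+1}$ on $\Omega$ (recall $p^{n+1}$ has zero mean, so it lies in $L^2_0(\Omega)$) and $\|\bar\bfv\|_{H^1_0} \le C_\Omega \|p^{n+1}\|_{L^2(\Omega)}$. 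Setting $\bfv = \widetilde{\Pi}_\edges \bar\bfv \in \Hmeshzero$ and using the duality \eqref{eq:duality} together with the divergence preservation \eqref{fortin-prop3}:
\[
-\int_\Omega \nabla_\edges p^{n+1}\cdot \widetilde{\Pi}_\edges\bar\bfv \dx
= \int_\Omega p^{n+1}\,\dive_\mesh(\widetilde{\Pi}_\edges\bar\bfv)\dx
= \int_\Omega p^{n+1}\,\Pi_\mesh(\dive\bar\bfv)\dx
= \int_\Omega p^{n+1}\,\Pi_\mesh(p^{n+1})\dx = \|p^{n+1}\|^2_{L^2(\Omega)},
\]
using that $p^{n+1}\in L_\mesh$ so $\int_\Omega p^{n+1}\Pi_\mesh q = \int_\Omega p^{n+1} q$ for any $q\in L^2$, and $\Pi_\mesh p^{n+1} = p^{n+1}$.

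Next I bound the four right-hand terms of the momentum equation tested against $\bfv = \widetilde{\Pi}_\edges\bar\bfv$. The source term is controlled by Cauchy--Schwarz and the $L^2$-continuity of $\Pi_\edges$: $|\int_\Omega \bff_\edges^{n+1}\cdot\bfv\dx| \le \|\bff_\edges^{n+1}\|_{L^2}\|\bfv\|_{L^2} \le \|\bff(t^{n+1},\cdot)\|_{L^2}\,C_{\eta_\mesh}\|\bar\bfv\|_{H^1_0}$. The diffusion term is bounded by Lemma \ref{duality-dual}-type Cauchy--Schwarz and the $L^\infty$-bound \eqref{mumin} on $\mu^{n+1}_\edgesd$ together with the Korn inequality \eqref{eq:korn}: $|\int_\Omega \mu^{n+1}_\edgesd\mathbf{D}_\edgesd(\bfu^{n+1}):\mathbf{D}_\edgesd(\bfv)\dx| \le \mu_{\max}\|\mathbf{D}_\edgesd(\bfu^{n+1})\|_{L^2}\|\mathbf{D}_\edgesd(\bfv)\|_{L^2} \le C\,\mu_{\max}\|\bfu^{n+1}\|_{1,\edges,0}\|\bfv\|_{1,\edges,0}$, and $\|\bfv\|_{1,\edges,0} = \|\widetilde{\Pi}_\edges\bar\bfv\|_{1,\edges,0} \le C_{\eta_\mesh}\|\nabla\bar\bfv\|_{L^2}$ by \eqref{fortin-prop1}. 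The convection term is handled by the second estimate in Lemma \ref{lem-estimb}: $|b_\edges(\rho^{n+1}\bfu^{n+1},\bfu^{n+1},\bfv)| \le C_{\eta_\mesh}\rho_{\max}\|\bfu^{n+1}\|_{1,\edges,0}^2\|\bfv\|_{1,\edges,0}$. For the discrete time-derivative term I write $\eth_t(\rho\bfu)^{n+1} = \tfrac1{\deltat}((\rho\bfu)^{n+1} - (\rho\bfu)^n)$ and bound $|\int_\Omega \eth_t(\rho\bfu)^{n+1}\cdot\bfv\dx| \le \tfrac1{\deltat}(\|(\rho\bfu)^{n+1}\|_{L^2} + \|(\rho\bfu)^n\|_{L^2})\|\bfv\|_{L^2}$; since $(\rho u_i)_{D_\edge}$ is a convex combination of $\rho_K,\rho_L$ times $u_\edge$, we have $\|(\rho\bfu)^k\|_{L^2} \le \rho_{\max}\|\bfu^k\|_{L^2} \le \rho_{\max}C$ by \eqref{estiLinfiny:ro} (and for $k=n$ by the same estimate or the projection of the data). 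Collecting, every term is bounded by a constant (depending on $\rho_0,\mu_0,\deltat,\eta_\mesh,\bff,\Omega$, through $\rho_{\min},\rho_{\max},\mu_{\max}$ and the velocity estimate $C$) times $\|\bar\bfv\|_{H^1_0} \le C_\Omega\|p^{n+1}\|_{L^2(\Omega)}$. Dividing by $\|p^{n+1}\|_{L^2(\Omega)}$ yields \eqref{es:pres}.

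The main obstacle is the $\deltat^{-1}$ factor in the time-derivative term, which is precisely why the bound is \emph{non-uniform} (it blows up as $\deltat\to 0$); this is acceptable here since the lemma is only used for the topological-degree existence argument at a fixed discretization, not in the convergence passage. One must be careful that the "given" triple $(\rho^n,\bfu^n,p^n)$ contributes $\|\bfu^n\|_{L^2}$, which must be controlled either by induction (it satisfies the previous scheme step, hence \eqref{estiLinfiny:ro}) or, at $n=0$, by $\|\bfu^0\|_{L^2} = \|\Pi_\edges\bfu_0\|_{L^2} \le \|\bfu_0\|_{L^2}$. A second subtlety is the choice of the lifting $\bar\bfv$: one invokes the classical surjectivity of $\dive : H^1_0(\Omega)^d \to L^2_0(\Omega)$ on the bounded connected Lipschitz domain $\Omega$, with norm constant $C_\Omega$ depending only on $\Omega$; this is where the zero-mean constraint on $p^{n+1}$ (membership in $L_{\mesh,0}$) is essential.
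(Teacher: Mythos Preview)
Your proof is correct and follows essentially the same route as the paper's: construct a continuous lifting $\bar\bfv \in H^1_0(\Omega)^d$ with $\dive\bar\bfv = p^{n+1}$, push it to $\Hmeshzero$ via the Fortin operator $\widetilde{\Pi}_\edges$, use the divergence-preservation property \eqref{fortin-prop3} to recover $\|p^{n+1}\|_{L^2}^2$ from the pressure term, and bound the remaining four terms of \eqref{qdm:weakp} exactly as you describe. The paper obtains the lifting through the Neumann--Poisson problem and the Babu\v{s}ka--Aziz construction rather than citing the Ne\v{c}as surjectivity directly, and it handles the diffusion term by first expanding $\mathbf{D}_\edgesd:\mathbf{D}_\edgesd$ via the symmetry identity \eqref{eq:symm-grad} before applying Cauchy--Schwarz, but these are cosmetic differences.
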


\begin{proof}
Let $(\rho^{n+1},\bfu^{n+1},p^{n+1})$ verify $n+1-$th step of the discrete scheme
\eqref{eq:scheme:ro}. Since $p^{n+1}\in L^2_0(\Omega)$, then there exists a unique
weak solution in $\bfH^1(\Omega)$ to the Poisson problem $-\Delta \bfw = p^{n+1}$
on $\Omega$ with homogeneous neumann boundary conditions on $\partial \Omega$
\cite[Theorem III.4.3]{BoyerFabrie-book}. From this solution we may construct
$\bfvarphi \in H^1_0(\Omega)^d$ so that,
\[
\dive \bfvarphi = p^{n+1} \quad \mbox{and} \quad \|\nabla \bfvarphi\|_{L^{2}(\Omega)^{d\times d}}
\leq C \|p^{n+1}\|_{L^{2}(\Omega)}\] with $C$ depending only on $\Omega$
(see \cite[Lemma 5.4.2]{babuskaaziz} for the construction process).
We then choose $\bfv = \widetilde{\Pi}_{\edges}\bfvarphi$ as a test function in
\eqref{qdm:weakp}.\\\\ Using the preservation of the divergence property of the
Fortin operator \eqref{fortin-prop3}, we obtain $\| p^{n+1} \|_{L^{2}(\Omega)}^2  = T_1 + T_2 + T_3 +T_4$
with
\begin{align*}
T_1 &= \int_{\Omega} \eth_{t}(\rho\bfu)^{n+1} \cdot   \bfv \dx,  & T_2 &= b_\edges ((\rho\bfu)^{n+1}, \bfu^{n+1} ,\bfv) \\
T_3 &=  \int_\Omega \mu^{n+1}_\edgesd  \bs{D_\edgesd}(\bfu^{n+1}) : \bs{D_\edgesd}(\bfv) \dx & T_4 &= -
   \int_{\Omega}\bff_{\edges}^{n+1} \cdot   \bfv \dx
\end{align*}
Let us focus on $T_1$; by the Cauchy-Schwarz inequality and thanks to the
$L^\infty(0,T;L^\infty(\Omega))$ bound \eqref{romin} on $\rho$ we deal with
$(\rho\bfu)^{n+1}$ to get
\[
\vert T_1 \vert \le  \frac {\rho_{\max}}{\deltat} \sum_{i=1}^d \int_\Omega
(\vert u_i^{n+1} \vert + \vert u_i^{n} \vert) \vert v_i \vert \dx \le
\frac{2 \rho_{\max}}{\deltat} \Vert \bfu \Vert_{L^\infty(0,T,(L^2(\Omega)^d)} \Vert \bfv  \Vert_{L^2(\Omega)^d}.
\]
We combine the fact that $\widetilde{\Pi}_{\edges}$ is continuous in $H^1_0(\Omega)$
with the Poincar\'e inequality to obtain $ \Vert \bfv \Vert_{L^2(\Omega)^d} \leq C \|\nabla \bfvarphi\|_{L^{2}(\Omega)^{d\times d}}$.
By the $L^\infty(L^2)$ estimate \eqref{estiLinfiny:ro}  we find $C_1 \ge 0$ depending only on $\rho_{0}$,
$\deltat$, $\eta_{\mesh}$, $\bff$, and $\Omega$ such that $ \vert T_1 \vert \le C_1 \| p^{n+1} \|_{L^{2}(\Omega)}$.\\
From estimate \eqref{estimb-rho} on the trilinear form we obtain a bound on $T_2$:
\begin{align*}
  &\displaystyle \vert T_2 \vert \leq C_{\eta_{\mesh}} \|\rho^{n+1}\|_{L^{\infty}} \|\bfu^{n+1} \|_{1,\edges,0}^{2}
  \|  \bfv\|_{1,\edges,0} \le C_2(\rho_0,\eta_{\mesh},\bff)\| p^{n+1} \|_{L^{2}(\Omega)}.
\end{align*}

again by using the $L^\infty(L^\infty)$ bound on the density \eqref{estiLdeux:ro}
and the uniform $L^2(0,T;H^1_0(\Omega))$ bound on $\bfu$. We deal with the remaining
terms similarly: using the symmetry property \eqref{eq:symm-grad} of the discrete
operator $\bs{D_\edgesd}$ on $T_3$ yields,
\begin{align*}
\displaystyle T_3 & =  \int_\Omega \mu^{n+1}_\edgesd \bs{D_\edgesd}(\bfu^{n+1}) : \bs{D_\edgesd}(\bfv) \dx\vspace{5pt}\\
& =  \frac{1}{2} \int_\Omega \mu^{n+1}_\edgesd \nabla_\edgesd\bfu^{n+1} : \nabla_\edgesd\bfv \dx +
\frac{1}{2} \int_\Omega \mu^{n+1}_\edgesd \nabla_\edgesd\bfu^{n+1} : \nabla^T_\edgesd\bfv \dx
\end{align*}

It follows by Cauchy-Schwarz : $\vert T_3 \vert \leq \mu_{max}\|\bfu^{n+1}\|_{1,\edges,0} \|\bfv\|_{1,\edges,0}
\le C_3(\mu_0,\eta_{\mesh},\bff)\| p^{n+1} \|_{L^{2}(\Omega)}$. We have for the last bound :
\[
T_4 \le \|\bff_{\edges}^{n+1}\|_{L^{2}(\Omega)^{d}} \|\bfv\|_{L^{2}(\Omega)^{d}} \dx \leq  C_4(\eta_{\mesh},\bff) \|p^{n+1}\|_{L^{2}(\Omega)}
\]
which concludes the proof.
\end{proof}
\medskip
\medskip

\subsection{Existence result at any given time}

In order to prove the existence of a solution to the scheme
\eqref{eq:scheme:ro} at any given time, we resort to a topological
degree argument.  In a finite-dimension context, we make use of the
Brouwer topological degree $d:\mathcal{A} \rightarrow \mathbb{Z}$,
where $\mathcal{A}$ refers to the set of triplets $(F,\mathcal{O},y)$
with $\mathcal{O}$ being an open bounded set of $\R^N$,
$F:\overline{\mathcal{O}} \rightarrow \R^N$ is a continuous function
and $y \in \R^N\setminus F(\partial \mathcal{O})$. The triplet is such
that $d(F,\mathcal{O},y)$ is meant to express the existence of
solutions of the problem $F(x)=y$, and possibly their locations in
$\mathcal{O}$ as well. Let us recall its relevant properties for our
problem,

\begin{definition}
Let $\mathcal{A} = \{ (F,\mathcal{O},y)\ $ such that $\ \mathcal{O} \subset \R^N\ $
open bounded, $\ F\in \mathcal{C}(\overline{\mathcal{O}}),\ y \notin F(\partial \mathcal{O}) \}$.
The Brouwer degree $d: \mathcal{A} \rightarrow \mathbb{Z}$ is defined to have
the following properties:
\begin{itemize}
\item $d((F,\mathcal{O},y)\neq 0$ implies that  $F^{-1}(y)\neq \emptyset$
\item $d((F(\lambda,\cdot),\mathcal{O},y(\lambda))$ is independent of
$\lambda$ if $F:[0,1]\times \overline{\mathcal{O}} \rightarrow \R^N$ and
$y:[0,1]\rightarrow \R^N$ are continuous, while $y(t) \notin F(\lambda,\partial \mathcal{O})$
for every $\lambda \in [0,1]$
\end{itemize}
\end{definition}

One can find the remaining properties in \cite{nonlinfunctanal}. The last property
refers to the homotopy invariance, and will allow us to demonstrate that there exists
at least one solution to (\ref{eq:scheme:ro}) by considering a simpler problem,
close to the unsteady Stokes equations.\\\\
The fixed-point theory may be seen as a particular case of the topological degree.
Uniqueness of the solution, if it is shown to exist by the topological degree,
isn't guaranteed.
\begin{theorem}[Existence of a solution]\label{theo:existence}
For a given $n\in\{1,\cdots,N-1\},$  let us assume that the density $\rho^{n}$
is such that $0 <\rho_{\min} \leq \rho_{K}^{n} \leq\rho_{\max}$ for all $K\in\mesh.$
Then the non-linear system \eqref{eq:scheme:ro} admits at least one solution
 $(\rho^{n+1},\bfu^{n+1},p^{n+1})$ in $L_{\mesh}\times\Hmeshzero\times L_{\mesh,0},$
 and any possible solution satisfies the estimates \eqref{romin},
\eqref{estiLdeux:ro} and \eqref{es:pres}.
\end{theorem}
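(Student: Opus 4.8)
The plan is to use the homotopy invariance of the Brouwer degree. First I would fix $n$ and treat $\rho^{n}$, $\mu^{n}$, $\bfu^{n}$ as given data satisfying the hypotheses. Since the discrete divergence constraint together with the upwind mass balance already pins down $\rho^{n+1}$ and $\mu^{n+1}$ once $\bfu^{n+1}$ is known (via \eqref{eq:mass-rv:dis}), the genuinely nonlinear unknowns can be taken to be $(\bfu^{n+1},p^{n+1}) \in \Hmeshzero \times L_{\mesh,0}$, identified with a point in $\R^{N}$ with $N = N_\edges + N_\mesh - 1$. Actually it is cleaner to keep the full triple $(\rho^{n+1},\bfu^{n+1},p^{n+1})$ and write the scheme \eqref{eq:scheme:dis} as $F_1(\rho^{n+1},\bfu^{n+1},p^{n+1}) = 0$ for a continuous map $F_1$ on the corresponding finite-dimensional space; continuity of $F_1$ is clear since the upwind choices \eqref{eq:divflux}, \eqref{dual:unkown-rv} and the dual fluxes \eqref{eq:flux_dual} depend continuously (though not smoothly) on the unknowns, and the products involved are polynomial in those continuous building blocks.

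Next I would build a homotopy $F_\lambda$, $\lambda\in[0,1]$, connecting $F_1$ to a simpler map $F_0$ whose zero set is easy to analyze — the natural choice being the unsteady Stokes-type problem obtained by multiplying the convection term $\bfC_\edges$ by $\lambda$, and (to keep the density equation invertible) keeping the mass balance and divergence constraint as they are, or linearizing the density transport at $\lambda=0$ so that $\rho^{n+1}$ is simply advected in a decoupled way. Concretely: replace $b_\edges(\rho^{n+1}\bfu^{n+1},\bfu^{n+1},\bfv)$ by $\lambda\, b_\edges(\rho^{n+1}\bfu^{n+1},\bfu^{n+1},\bfv)$, and in \eqref{eq:mass-rv:dis} replace the flux $F_{K,\edge}^{n+1}=|\edge|\rho_\edge^{n+1}u_{K,\edge}^{n+1}$ by $|\edge|\rho_\edge^{n+1}u_{K,\edge}^{n+1}$ for all $\lambda$ (this term is already affine in $\rho^{n+1}$ once $\bfu^{n+1}$ is fixed, so no homotopy is needed there). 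At $\lambda=0$ the momentum equation becomes linear in $(\bfu^{n+1},p^{n+1})$ given $\rho^{n+1}$, while $\rho^{n+1}$ solves a linear (upwind) transport system; the resulting square linear system is invertible — invertibility of the velocity-pressure block follows from the inf--sup stability of the MAC scheme (Lemma \ref{duality}) together with the coercivity of the diffusion term via the discrete Korn inequality \eqref{eq:korn} and the positivity $\mu^{n+1}\geq\mu_{\min}$ from Lemma \ref{lem:estim-density}, and invertibility of the density block follows because the upwind transport operator, restricted by the divergence-free constraint on $\bfu^{n+1}$, is an M-matrix-type operator with strictly positive diagonal (using $1/\deltat > 0$). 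Hence $d(F_0,\mathcal{O},0)\neq 0$ for a suitable ball $\mathcal{O}$.

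The heart of the argument is the \emph{uniform a priori estimate along the homotopy}: I must exhibit an open bounded set $\mathcal{O}\subset\R^{N}$ such that every solution of $F_\lambda(\cdot)=0$, for every $\lambda\in[0,1]$, lies in $\mathcal{O}$ and stays away from $\partial\mathcal{O}$. This is where Lemmas \ref{lem:estim-density}, \ref{lem:est-vit-ro} and \ref{lem:est:pressure:ro} are used: the discrete maximum principle \eqref{romin}--\eqref{mumin} holds for any $\lambda$ since it only uses the upwind mass balance and the divergence constraint, which are $\lambda$-independent; the velocity estimates \eqref{estiLdeux:ro}--\eqref{estiLinfiny:ro} go through with the factor $\lambda$ in front of the convection term because the kinetic-energy argument in the proof of Lemma \ref{lem:est-vit-ro} shows the convection term contributes \emph{exactly zero} after summation (it cancels by conservativity of the dual fluxes along $\edged=\edge|\edge'$), so multiplying it by $\lambda$ still gives a zero contribution; and the non-uniform pressure bound \eqref{es:pres} then follows from Lemma \ref{lem:est:pressure:ro}, again since the $\lambda$ only weakens the convection contribution $T_2$. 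Choosing $\mathcal{O}$ to be (say) twice the corresponding balls in $\Hmeshzero\times L_{\mesh,0}$ (and in $L_\mesh$ for $\rho^{n+1}$, using the $[\rho_{\min},\rho_{\max}]$ bounds) guarantees $0\notin F_\lambda(\partial\mathcal{O})$ for all $\lambda$. By homotopy invariance $d(F_1,\mathcal{O},0)=d(F_0,\mathcal{O},0)\neq 0$, hence $F_1^{-1}(0)\neq\emptyset$, giving a solution of \eqref{eq:scheme:ro}; and the estimates \eqref{romin}, \eqref{estiLdeux:ro}, \eqref{es:pres} for \emph{any} solution are exactly the lemmas already proved. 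The main obstacle is verifying carefully that the $\lambda$-scaled convection term is compatible with \emph{all three} a priori estimates simultaneously and that the degenerate limit problem at $\lambda=0$ is genuinely uniquely solvable (in particular that the decoupled density transport matrix is invertible for the given $\deltat$), which requires checking that the divergence-free condition on $\bfu^{n+1}$ makes the transport operator coercive on $L_\mesh$ — essentially the $\lambda$-independent computation $\sum_{\edge\in\edges(K)}|\edge|(\rho_K^{n+1})^2 u_{K,\edge}^{n+1} = (\rho_K^{n+1})^2(\dive_\mesh\bfu^{n+1})_K = 0$ already used in the weak $BV$ lemma.
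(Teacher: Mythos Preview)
Your overall plan (topological degree plus homotopy invariance, with the a priori estimates of Lemmas \ref{lem:estim-density}--\ref{lem:est:pressure:ro} furnishing the open set $\mathcal{O}$) is exactly the paper's strategy. The gap is in the design of the homotopy.

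You multiply only the momentum convection term by $\lambda$ and leave the mass balance untouched. This breaks two things at once. First, the kinetic energy estimate does \emph{not} survive: the proof of Lemma \ref{lem:est-vit-ro} does not show that ``the convection term contributes exactly zero after summation'' on its own. What actually happens is that after multiplying \eqref{schemevitesse:dis} by $u_\edge^{n+1}$ and rearranging, the term
\[
\Bigl(\tfrac{1}{\deltat}(\rho_{D_\edge}^{n+1}-\rho_{D_\edge}^{n})+\tfrac{1}{|D_\edge|}\sum_{\edged}F_{\edge,\edged}^{n+1}\Bigr)(u_\edge^{n+1})^2
\]
is made to vanish via the \emph{dual mass balance} \eqref{mass-dual}, and only after this cancellation does the residual convection term telescope by conservativity. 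In your homotopy the bracket above becomes $\bigl(\tfrac{1}{\deltat}(\rho_{D_\edge}^{n+1}-\rho_{D_\edge}^{n})+\lambda\,\tfrac{1}{|D_\edge|}\sum_{\edged}F_{\edge,\edged}^{n+1}\bigr)(u_\edge^{n+1})^2$, and since the (unpenalized) dual mass balance gives $\tfrac{1}{\deltat}(\rho_{D_\edge}^{n+1}-\rho_{D_\edge}^{n})=-\tfrac{1}{|D_\edge|}\sum_\edged F_{\edge,\edged}^{n+1}$, you are left with an uncontrolled term $(\lambda-1)\tfrac{1}{|D_\edge|}\sum_\edged F_{\edge,\edged}^{n+1}(u_\edge^{n+1})^2$ of indeterminate sign. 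The $\lambda$-uniform velocity bound therefore fails, and with it the construction of $\mathcal{O}$.

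Second, your $\lambda=0$ problem is \emph{not} linear: $\rho^{n+1}$ still depends on $\bfu^{n+1}$ through the full upwind transport, and the momentum equation still carries $\rho_{D_\edge}^{n+1}u_\edge^{n+1}$ and $\mu(\rho^{n+1})$. The ``density block is an M-matrix / velocity--pressure block is inf--sup stable'' argument establishes separate conditional solvability but not invertibility (or nonzero degree) of the coupled nonlinear map $F_0$; the Jacobian is not block-triangular.

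The paper fixes both issues with a single stroke: it \emph{also} multiplies the mass flux in \eqref{eq:mass-rv:dis} by $\lambda$. Then the $\lambda$-dual mass balance reads $\tfrac{1}{\deltat}(\rho_{D_\edge}^{n+1}-\rho_{D_\edge}^{n})+\lambda\,\tfrac{1}{|D_\edge|}\sum_\edged F_{\edge,\edged}^{n+1}=0$, which matches the $\lambda$ in the momentum convection and restores the kinetic energy identity uniformly in $\lambda$; and at $\lambda=0$ the mass equation degenerates to $\rho^{n+1}=\rho^{n}$, so the density and viscosity become \emph{given data} and $F_0$ reduces to an honest linear Stokes-type system with a block-triangular Jacobian, whose invertibility is straightforward.
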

\begin{proof}
 Let $N_{\mesh}=$ card$(\mesh)$ and $N_\edges=$ card$(\edgesint)$;
 we may identify $L_{\mesh}$ with $\R^{N_{\mesh}}$ and $\Hmeshzero$ with $\R^{N_{\edges}}$.
 Let $V=\R^{N_{\mesh}}\times\R^{N_{\edges}}\times\R^{N_{\mesh}}$.
 Let us introduce the function
 $F :  [0,1] \times V\rightarrow V$ defined by:
\[
 F(\lambda, \rho,\bfu,p)= \left|
 \begin{array}{ll}
  \displaystyle\frac{1}{\deltat} (\rho_{K}-\rho_{K}^{n})+\lambda \frac{1}{\vert K\vert}
  \sum_{\edge\in\edges(K)}F_{K,\edge}, & K\in\mesh \\[2ex]
  \displaystyle\frac{1}{\deltat}(\rho_{D_{\edge}} u_{\edge}-\rho_{D_{\edge}}^{n} u_{\edge}^{n})+
  \lambda \frac{1}{\vert D_{\edge}\vert} \ \sum_{\edged\in\tilde\edges(D_{\edge})} F_{\edge,\edged} u_{\edged} & \edge\in\edgesint\\
  \qquad \qquad \qquad -  \dive_\edges(\mu_\edgesd\mathbf{D}_\edgesd(\bfu)))_{D_\edge}+(\nabla p)_{\edge}- f_{\edge},\\[2ex]
    \displaystyle-\frac{1}{\vert K \vert}\sum_{\edge\in\edges(K)}\vert \edge\vert u_{K,\edge}
    +\frac{1}{\vert K\vert}\sum_{L\in\mesh}\vert L\vert\,  p_L,  &\ K\in\mesh.
  \end{array}\right.
\]
The source term is included in the expression of $F$, hence we have $y = 0_V$.
The function $F$ is continuous from $[0, 1] \times V $ to $V$ and the problem
$F (1,\rho, \bfu, p) = 0$  is equivalent to the system \eqref{eq:scheme:ro}.
Indeed, the first and second lines remain unchanged. Multiplying the third line
by $|K|$ then summing over the cells $K\in \mesh$, we end up with
\[
\displaystyle-\sum_{\edge\in\edgesext}\vert \edge\vert u_{K,\edge}
+ \text{card}(\mesh)\sum_{L\in\mesh}\vert L\vert\,  p_L = 0
\]
as the terms over $\edgesint$ have vanished by conservativity.  Using the fact that
$\bfu_{\edge}=0$ for $\edge \in \edgesext$ -- from the choice for $N_\edges$ and $V$
-- it follows that $\sum_{L\in\mesh}\vert L\vert\,  p_L=0$, enforcing the constraint
$p\in L_{\mesh,0}$. Hence we have $(\dive \bfu)_K = 0$, $\forall K\in \mesh$ giving \eqref{eq:scheme:ro}.\\\\
Also we note that the placement of $\lambda$ in the expression of $F$ takes all
the nonlinearities into account: the penalization of the convective term deals
with the product of $\rho^{n+1}$ and $\bfu^{n+1}$, and the penalization of the mass
balance equation allows us to go back to a constant density and viscosity.\\\\
In order to define the open set $\mathcal{O}$ we make use of the estimates
(\ref{romin}), (\ref{estiLdeux:ro}) and (\ref{es:pres}). Thanks to the properties
of convection fluxes, the problem $F(\lambda, \rho, \bfu, p) = 0$, satisfies
those estimates uniformly in $\lambda$.
Hence, we may introduce $C$ so that the bounded open set $\mathcal{O}$ can be defined as,
\[
 \mathcal{O}=\{ (\rho,\bfu,p)\in V \ \mbox{s.t.} \ \frac{\rho_{\min}}{2}< \rho< 2 \ \rho_{\max}, \
 \|\bfu\|_{1,\edges,0}< C \ \mbox{and}\ \| p\|_{L^{2}(\Omega)} < C\},
\]
with $y \notin F(1,\partial \mathcal{O})$.
In order to prove the existence of at least one solution to the scheme
\eqref{eq:scheme:ro}, it remains to show that $d((F(0,\cdot),\mathcal{O},0_V) \neq 0$.
For $\lambda = 0$ the density still remains variable regarding the space variable,
which prevents us from using directly the results from \cite{blanc05}, as done in
\cite{conv-mac-NS-18}.
Let us consider the function $G : (\rho, \bfu, p)\mapsto F (0,\rho, \bfu, p)$.
In order to show the existence of a solution to $G(\rho, \bfu, p) = 0_V$
we make use of the inverse function theorem. The function $G$ is differentiable
on $\mathcal{O}$, and its Jacobian matrix is given by:\\
\begin{equation*}
\mbox{Jac}\ G(\rho,\bfu,p)=
\left(\begin{array}{c|c}
\dfrac{1}{\deltat} Id_{\R^{N_{\mesh}\times N_{\mesh}}}
& 0 \\[1ex] \hline
A & {} \\
& S(\bfu,p) \\
0 \\
\end{array}\right)
\end{equation*}\\\\

where $A$ the matrix in $\R^{N_{\edges}\times N_{\mesh}}$ associated to the
differentiation of $(\rho_{D_\edge}u_\edge/\deltat)_{\edge \in \edgesint}$ and
of the viscous term (containing $\mu_\edgesd$) with respect to $(\rho_K)_{K\in\mesh}$.
The matrix $S(\bfu, p)\in \R^{(N_{\edges}+N_{\mesh}) \times(N_{\edges}+N_{\mesh})}$ is the
Jacobian matrix associated to the MAC discretization of the following Stokes
problem with a space-variable density and viscosity:\\\\
Find $(\bfu,p)$ such that $p\in L^2_0(\Omega)$ and
\begin{align*}
  \begin{array}{l l r }
 \rho(\bfx) \partial_t\bfu- \dive(\mu(\bfx)D(\bfu))+\nabla p=\bff & \mbox{ in} & \ \Omega\\
\dive \ \bfu=0&  \mbox{ in} &\ \Omega\\
\bfu=0&  \mbox{ on} &\ \partial\Omega
\end{array}
\end{align*}
where $\rho(\bfx), \mu(\bfx) \in L^\infty(\Omega)$ are approximated by
$\rho_{D_{\edge}}$ and $(\mu_\edged)_{\edged \in \edgesd(D\edge)}$ on each dual cell $D_{\edge}$.\\\\
Since $\lambda = 0$ we have $\rho(\bfx) = \rho^{n}(\bfx)$ and  $\mu(\bfx) = \mu^{n}(\bfx)$.
Hence we are dealing with given, and strictly positive, real quantities.
Therefore $S(\bfu, p)$ is invertible \cite{blanc05} and so is Jac $G(\rho, \bfu, p)$
by being a triangular block matrix.
This implies that the topological degree of $F (0,\rho, \bfu, p)$ is non-zero
and by the homotopy invariance, there exists at least one solution $(\rho, \bfu, p)$
to the equation $F (1,\rho, \bfu, p) = 0$, ie \ to the scheme \eqref{eq:scheme:ro}.
\end{proof}

\medskip
\section{Convergence of the scheme}

From now on let us consider a sequence of
time steps $(\deltat_{m})_\mnn$ and a sequence of
MAC grids $(\mathcal{D}_m)_\mnn= (\mathcal{M}_m,\mathcal{E}_m)_\mnn$ -- in the sense of
Definition \ref{def:MACgrid}. The sequence $(\deltat_m,\mathcal{D}_m)_\mnn$ will be
assumed to satisfy $(\deltat_m, h_{\mesh_m}) \rightarrow (0,0)$ for $m\rightarrow\infty$
in order to return to a continuous formulation of the domain $[0,T]\times\Omega$.
Therefore, for every $\mnn$, we consider the corresponding discrete scheme \eqref{eq:scheme:ro}
for $\deltat =\deltat_m$ and $\mathcal{D} = \mathcal{D}_m$. \\\\
So as to establish the convergence of the weak discrete scheme as
$(\deltat_m, h_{\mesh_m}) \rightarrow (0,0)$, we follow the steps from \cite{conv-mac-NS-18}
-- for the nonlinear convection term -- and \cite{latche-saleh-17}  -- for the mass balance equation --
by passing to the limit separately in the mass balance and momentum balance equations.
This final step requires the convergence of the discrete solutions $(\rho_m,\bfu_m)_\mnn$ as $m\rightarrow \infty$. \\\\
The strong convergence of the sequence of discrete velocities is obtained
using a compactness result from Annex \ref{annex:compac}. The latter may be applied
if an estimate on the time translates of the velocity holds, which we prove in the next subsection.

\subsection{Estimate on the time translates of the velocity}

\noindent In order to apply Theorem \ref{theo-compact-space} for the compactness
in $L^2$, we need to work around the term $\partial_t (\rho \bfu)$ from
\eqref{schemevitesse} to derive a bound on the time translates of the velocity.

\begin{lemma}
\label{lem:translates}
Let $(\rho,\bfu,p)$ be a discrete solution of \eqref{eq:scheme:ro}.
Then, for a given $\tau$ verifying $0 < \tau < T$ we have the following
estimate on the time translates of $\bfu$,
 \begin{equation}
   \int_0^{T-\tau} \int_\Omega  \vert \bfu(t,\bfx+ \tau) - \bfu(t,\bfx) \vert^2 \dx \dt \le C \sqrt{\tau + \deltat}
   \label{translates}
 \end{equation}
 where $C>0 $ only depends on $T$, $\bff$, $\rho_0, \mu_0$ and $\eta_\mesh$.
\end{lemma}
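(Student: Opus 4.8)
The plan is to test the discrete momentum equation \eqref{qdm:weak} against the velocity increment itself, in the spirit of \cite{conv-mac-NS-18,latche-saleh-17}. Fix $\tau\in(0,T)$; for $t\in(0,T-\tau)$ write $\bfu(t)=\bfu^{n_0+1}$, $\bfu(t+\tau)=\bfu^{n_1+1}$ with $n_0=n_0(t)\le n_1=n_1(t)$, set $\bfv_t=\bfu(t+\tau)-\bfu(t)$, and let $W(t)=\{n_0+1,\dots,n_1\}$ be the window of intermediate time steps; note that $\{t\in(0,T-\tau):k\in W(t)\}$ has measure at most $\tau+\deltat$ for every $k$. Since each $\bfu^{n+1}$ is discrete divergence free, $\bfv_t\in\boldsymbol{E}_\edges$, hence it is an admissible test function in \eqref{qdm:weak} and the pressure term drops. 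Besides this I will only use: the bounds $\rho\in[\rho_{\min},\rho_{\max}]$, $\mu\in[\mu_{\min},\mu_{\max}]$ (Lemma~\ref{lem:estim-density}); the velocity estimates \eqref{estiLdeux:ro}--\eqref{estiLinfiny:ro}; the estimates on $b_\edges$ (Lemma~\ref{lem-estimb}) together with the discrete Sobolev inequality; the identity \eqref{eq:bound-D}; and the summed weak-$BV$ consequence of \eqref{eq:rhobv}, namely $\sum_n\sum_{i,\edge}|D_\edge|(\rho^{n+1}_{D_\edge}-\rho^n_{D_\edge})^2\le C$ (which follows from $\sum_n\mathcal{R}_{\rho}^{n+1}\le\tfrac12\|\rho_0\|_{L^2}^2$, $\rho^{n+1}_{D_\edge}$ being a convex combination of $\rho^{n+1}_K,\rho^{n+1}_L$). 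Applying $\rho^{n_1+1}_{D_\edge}\ge\rho_{\min}$ and the elementary identity $\rho_1(a-b)^2=(\rho_1a-\rho_0b)(a-b)+(\rho_0-\rho_1)\,b\,(a-b)$ componentwise on each dual cell gives, for a.e.\ $t$,
\[
\rho_{\min}\int_\Omega|\bfv_t|^2\dx \le I_1(t)+I_2(t),\qquad
I_1(t)=\int_\Omega\bigl((\rho\bfu)(t+\tau)-(\rho\bfu)(t)\bigr)\cdot\bfv_t\dx,\qquad
I_2(t)=\int_\Omega\bigl(\rho(t)-\rho(t+\tau)\bigr)\bfu(t)\cdot\bfv_t\dx,
\]
where $\rho$ on dual cells denotes the reconstruction $\rho_{D_\edge}$ from $\eth_t(\rho\bfu)$. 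I would integrate this over $(0,T-\tau)$ and bound the two terms separately.

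For $I_1$, telescoping yields $(\rho\bfu)(t+\tau)-(\rho\bfu)(t)=\sum_{k\in W(t)}\deltat\,\eth_t(\rho\bfu)^{k+1}$, so \eqref{qdm:weak} gives $I_1(t)=-\sum_{k\in W(t)}\deltat\bigl(b_\edges(\rho^{k+1}\bfu^{k+1},\bfu^{k+1},\bfv_t)+\int_\Omega\mu^{k+1}_\edgesd\mathbf{D}_\edgesd(\bfu^{k+1}):\mathbf{D}_\edgesd(\bfv_t)\dx-\int_\Omega\bff^{k+1}_\edges\cdot\bfv_t\dx\bigr)$. Bounding the convection term by the first estimate of Lemma~\ref{lem-estimb} followed by the discrete Sobolev inequality $\|\bfw\|_{L^4}\le C_{\eta_\mesh}\|\bfw\|_{1,\edges,0}$, the viscous term by Cauchy--Schwarz with \eqref{eq:bound-D} and \eqref{mumin}, and the source term by the discrete Poincaré inequality \eqref{eq:poincare}, I get $|I_1(t)|\le C\,\Phi(t)\,\|\bfv_t\|_{1,\edges,0}$ with $\Phi(t)=\sum_{k\in W(t)}\deltat\,g_k$ and $g_k=\|\bfu^{k+1}\|_{1,\edges,0}^2+\|\bfu^{k+1}\|_{1,\edges,0}+\|\bff^{k+1}_\edges\|_{L^2}$. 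Then by Hölder in $t$,
\[
\int_0^{T-\tau}|I_1(t)|\dt \le C\,\|\Phi\|_{L^2(0,T-\tau)}\Bigl(\int_0^{T-\tau}\|\bfv_t\|_{1,\edges,0}^2\dt\Bigr)^{1/2} \le 2C\,\|\Phi\|_{L^2(0,T-\tau)}\,\|\bfu\|_{L^2(0,T;\Hmeshzero)},
\]
and for $\|\Phi\|_{L^2}$ I would use $\|\Phi\|_{L^2}^2\le\|\Phi\|_{L^\infty}\|\Phi\|_{L^1}$: here $\|\Phi\|_{L^\infty}\le\sum_k\deltat\,g_k\le C$ (the quadratic part is exactly \eqref{estiLdeux:ro}, the other two follow by Cauchy--Schwarz and $\|\bff\|_{L^2(0,T;L^2)}$), while $\|\Phi\|_{L^1}=\sum_k\deltat\,g_k\,|\{t:k\in W(t)\}|\le(\tau+\deltat)\sum_k\deltat\,g_k\le C(\tau+\deltat)$. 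This yields $\int_0^{T-\tau}|I_1(t)|\dt\le C\sqrt{\tau+\deltat}$. The $L^\infty$--$L^1$ split is essential: a plain Cauchy--Schwarz on $\Phi$ would instead require an $L^4(0,T;\Hmeshzero)$ bound on $\bfu$, which is not available for $d=3$.

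For $I_2$, which carries the time variation of the density, I would write $\rho(t)-\rho(t+\tau)=-\sum_{k\in W(t)}(\rho^{k+1}_{D_\edge}-\rho^k_{D_\edge})$ on each dual cell, use Young's inequality to absorb $\tfrac{\rho_{\min}}4\|\bfv_t\|_{L^2}^2$ into the left-hand side of the displayed inequality, and bound the remainder using $\rho\le\rho_{\max}$, the $L^\infty(0,T;L^2)$ estimate \eqref{estiLinfiny:ro}, the summed weak-$BV$ bound above, and once more $|\{t:k\in W(t)\}|\le\tau+\deltat$; this gives $\int_0^{T-\tau}|I_2(t)|\dt\le\tfrac{\rho_{\min}}4\int_0^{T-\tau}\|\bfv_t\|_{L^2}^2\dt+C(\tau+\deltat)$. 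Combining with the $I_1$ bound, moving the absorbed term to the left, and using $\tau+\deltat\le2T$ (so $\tau+\deltat\le\sqrt{2T}\sqrt{\tau+\deltat}$) yields \eqref{translates}, with $C$ depending only on $T$, $\bff$, $\rho_0$, $\mu_0$ and $\eta_\mesh$ as claimed. I expect the $I_2$ term to be the real obstacle: pairing the low-regularity density increments with the velocity increments while keeping all constants independent of $h_\mesh$ and $\deltat$ is exactly what forces the weak-$BV$ estimate and the careful window bookkeeping; a secondary care point, as noted, is to avoid a $d=3$ convection estimate stronger than $L^2(0,T;\Hmeshzero)$.
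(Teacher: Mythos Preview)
Your treatment of $I_1$ is correct and in fact somewhat cleaner than the paper's: the $\|\Phi\|_{L^2}^2\le\|\Phi\|_{L^\infty}\|\Phi\|_{L^1}$ interpolation is exactly the right way to avoid any spurious $L^4(0,T;\Hmeshzero)$ requirement on $\bfu$, and it matches what the paper does for $A_{11},A_{13},A_{14}$ (there written as a swap of the $n$- and $k$-sums together with $\deltat l\le\tau+\deltat$).

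The gap is in $I_2$. After Young's inequality, the remainder you have to control is essentially
\[
R=\int_0^{T-\tau}\sum_{i}\sum_{\edge\in\edgesinti}|D_\edge|\,\bigl(\rho^{n_1+1}_{D_\edge}-\rho^{n_0+1}_{D_\edge}\bigr)^2\,|u^{n_0+1}_\edge|^2\,\mathrm{d}t,
\]
and the four ingredients you list---$\rho\le\rho_{\max}$, $\bfu\in L^\infty(0,T;L^2)$, the summed bound $\sum_n\sum_\edge|D_\edge|(\rho^{n+1}_{D_\edge}-\rho^n_{D_\edge})^2\le C$, and $|\{t:k\in W(t)\}|\le\tau+\deltat$---do not combine to give $R\le C(\tau+\deltat)$. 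Telescoping and Cauchy--Schwarz over the window introduce a factor $|W(t)|\sim(\tau+\deltat)/\deltat$; the weak-$BV$ bound controls $\sum_k\|\delta_k\rho\|_{L^2}^2$ but you then need to separate $|\delta_k\rho|^2$ from $|u_\edge(t)|^2$ in space, which forces either $\bfu\in L^\infty_x$ (not available) or $\delta_k\rho\in L^\infty_x$ (available, but using it destroys the smallness). Every H\"older split I can see leaves at least a factor $\deltat^{-1/2}$, so the bound is not uniform in the discretisation.

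The paper's remedy is to \emph{not} use the weak-$BV$ estimate here at all, but instead to invoke the dual mass balance \eqref{mass-dual} to rewrite each increment as
\[
\rho^{k+1}_{D_\edge}-\rho^{k}_{D_\edge}=-\frac{\deltat}{|D_\edge|}\sum_{\edged\in\edgesd(D_\edge)}F^{k+1}_{\edge,\edged},
\]
i.e.\ as $\deltat$ times a discrete divergence. After a discrete integration by parts this places a gradient on the product $u_i(t+\tau)\,w_i(t)$, and the term becomes $\deltat\int_\Omega(\rho^{k+1}\bfu^{k+1})_{\edgesi}\cdot\nabla_{\edgesi}(u_i^{n+l}w_i^n)\,\mathrm{d}\bfx$; one then uses H\"older $L^6\times L^{6/5}$, a discrete product rule $\|\nabla_{\edgesi}(u\,w)\|_{L^{3/2}}\le\|u\|_{L^6}\|\nabla_{\edgesi}w\|_{L^2}+\|w\|_{L^6}\|\nabla_{\edgesi}u\|_{L^2}$, the discrete Sobolev embedding into $L^6$, and finally Cauchy--Schwarz over $k\in W(t)$ to extract the $\sqrt{\tau+\deltat}$. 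In short: the density increment must be traded for a spatial derivative via the mass balance before one can close the estimate with the available $L^2(0,T;\Hmeshzero)$ control on $\bfu$. Your own remark that $I_2$ is ``the real obstacle'' is apt; the missing idea is precisely this use of \eqref{mass-dual}.
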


\begin{proof}

We proceed by analogy with the continuous case, e.g. \cite{BoyerFabrie-book}
to yield bounds on the time translates of the velocity.
Let $\tau > 0$ and $(\bfu, \rho ) \in X_{\edges,\deltat} \times Y_{\mesh,\deltat} $.
We note $\bfw(t,\bfx) =  \bfu (t + \tau,\bfx) -\bfu(t,\bfx)\in X_{\edges,\deltat} $.
In the continuous case, the estimate \eqref{translates} is obtained by bounding the term
\[
\int_0^{T-\tau} \int_\Omega \left(\rho(t,\bfx) \bfu(t,\bfx+ \tau) -\rho(t,\bfx)  \bfu(t,\bfx) \right) \cdot\bfw(t,\bfx) \dt.
\]
However in the context of the MAC scheme, the components of $\bfu$ are piecewise constant
on different meshes. Therefore we first deal with components separately.\\
Let $n_\tau$ be such that $T- \tau \in (t^{N-n_\tau -1}, t^{N-n_\tau}]$.
For any $t \in (0,T-\tau)$ we associate $(n,l)$ verifying $n =\lceil \frac{t}{\deltat}\rceil$
and $n+l =\lceil \frac{t + \tau}{\deltat}\rceil $. Thus we have
$t \in (t^{n-1}, t^{n}]$, $t+\tau \in (t^{n+l-1}, t^{n+l}]$ and we may use : $\rho(t) u(t) = \rho^{n} u^{n}$
and $\rho(t+\tau) u(t+\tau) = \rho^{n+l} u^{n+l}$ .
Writing $\rho(t,\bfx) u_i(t,\bfx+ \tau) -\rho(t,\bfx)  u_i(t,\bfx) =
\rho(t,\bfx+ \tau) u_i(t,\bfx+ \tau) -\rho(t,\bfx)  u_i(t,\bfx) - (\rho(t,\bfx+ \tau) - \rho(t,\bfx)) u_i(t,\bfx+ \tau)$,
we define $A^{(i)}_1$ and  $A^{(i)}_2$ as,\\
\begin{align*}
  & A^{(i)}_1(t) = \sum_{\edge \in \edgesinti} |D_\edge| \left( \rho^{n+l}_{D_\edge} u_{\edge}^{n+l} -\rho^{n  }_{D_\edge}  u_{\edge}^{n}\right)  w_{\edge}^{n}   \\
  & A^{(i)}_2(t) = \sum_{\edge \in \edgesinti} |D_\edge| \left( \rho^{n  }_{D_\edge} - \rho^{n+l}_{D_\edge} \right) u_{\edge}^{n+l}  w_{\edge}^{n}
\end{align*}
with $w_{\edge}^{n} = u_{\edge}^{n+l} - u_{\edge}^{n}$.\\
Therefore, we aim to bound the term,
\[ \sum_{i=1}^{d} \int_0^{T-\tau} A_1^{(i)}(t) + A_2^{(i)}(t) \dt \]
First, let us focus on $A^{(i)}_1(t)$. In order to invoke the momentum equation we need
to retrieve an expression with consecutive timesteps. Hence we reformulate $A^{(i)}_1(t)$ as,
\begin{align*}
  A^{(i)}_1(t) =\sum_{\edge \in \edgesinti} |D_\edge| w_{\edge}^{n} \sum_{k=n}^{n+l-1}
  \left( \rho^{k+1}_{D_\edge} u_{\edge}^{k+1} -\rho^{k  }_{D_\edge}  u_{\edge}^{k}\right)
\end{align*}
From the momentum equation \eqref{schemevitesse:dis}, we define
$(A^{(i)}_{1j})_j$, $j=1,\dots,4$ so that
$A^{(i)}_1(t) = A^{(i)}_{11}(t) + A^{(i)}_{12}(t) + A^{(i)}_{13}(t)+ A^{(i)}_{14}(t)$
with\\

\hspace{-30pt}\begin{minipage}{\textwidth}
\begin{flalign}\nonumber
\begin{array}{l l r l}
& A^{(i)}_{11}(t) = & & \displaystyle \sum_{k = n}^{n+l-1}\sum_{\edge \in \edgesinti} \deltat |D_\edge|f_{\edge}^{(k+1)} w_{\edge}^n  \\
& A^{(i)}_{12}(t) = &-& \displaystyle \sum_{k = n}^{n+l-1}\sum_{\edge \in \edgesinti} \deltat |D_\edge|(\partial_i p)^{k+1}_{\edge} w_{\edge}^n  \\
& A^{(i)}_{13}(t) = & & \displaystyle \sum_{k = n}^{n+l-1}\sum_{\edge \in \edgesinti} \deltat  |D_\edge| \dive_\edges
  (\mu\mathbf{D}(\bfu)))^{k+1}_{\edge} w_{\edge}^n  \\
& A^{(i)}_{14}(t) = &-& \displaystyle \sum_{k = n}^{n+l-1}\sum_{\edge \in \edgesinti} \deltat
  \sum_{\edged \in \edgesd(D_{\edge})} F^{k+1}_{\edge,\edged} u^{k+1}_{\edged} w_{\edge}^n
\end{array}
\end{flalign}
\end{minipage}\\~\\

From the definitions of $(n,l)$ we note that $\deltat l \leq \tau + \deltat$.
We deal with the term $\int_{0}^{T-\tau} A^{(i)}_{11}(t) \dt$ by using the Cauchy-Schwarz
inequality along with the inequality on $\deltat l$,
\begin{align}
  \int_{0}^{T-\tau} A^{(i)}_{11}(t) \dt & \le  \sum_{n=0}^{N-n_\tau} (\deltat)^2  \sum_{k = n}^{n+l-1}
  \Vert  f_i^{(k+1)}\Vert_{L^2(\Omega)}\Vert  w_i^n\Vert_{L^2(\Omega)}\nonumber  \\
  & \le  \sum_{n=0}^{N-n_\tau} \deltat \Vert  w_i^n\Vert_{L^2(\Omega)} \sum_{k = 0}^{N-1}
  \deltat \displaystyle \mathbb{1}_{[t^n,t^{n+l-1}]}(t^{k}) \Vert  f_i^{(k+1)}\Vert_{L^2(\Omega)}\nonumber\\
  & \le  \sum_{n=0}^{N-n_\tau} \deltat \Vert  w_i^n\Vert_{L^2(\Omega)} \sqrt{\deltat l}
  \Vert  f_i\Vert_{L^2(0,T;L^2(\Omega))} \le C(T) \sqrt{\tau + \deltat} \label{a11}
\end{align}
%
\noindent Owing to the fact that $u_i \in L^1(0,T; L^2(\Omega))$
(by Lemma \ref{lem:est-vit-ro}). The term $\sum_{i=1}^d A^{(i)}_{12}(t)$ vanishes thanks
to the discrete duality property \eqref{eq:duality} and to $\bfu$ being discrete-divergence-free
\eqref{schemediv:dis}. For the diffusive term we apply \eqref{eq:duality-dual}
followed by the Cauchy-Schwarz inequality. We then use \eqref{eq:symm-grad}
to retrieve an expression using $\nabla_\edgesd$ only,
\[
 \int_\Omega\dive_\edges(\mu\mathbf{D}(\bfu)))^{k+1} w^n \dx \le \mu_{\max}\Vert  u^{(k+1)} \Vert_{1,\edges, 0} \Vert \ \Vert w^n \Vert_{1,\edges, 0},
\]
Hence in order to deal with $\int_{0}^{T-\tau} A^{(i)}_{13}(t) \dt$ we proceed as in $A^{(i)}_{11}$ to obtain,
\begin{align}
  \sum_{i=1}^d\int_{0}^{T-\tau}  A^{(i)}_{13}(t) \dt
  & \le \mu_{\max} \sum_{n=0}^{N-n_\tau} \sum_{k=n}^{n+l-1}2 \deltat^2
  \Vert u^{(k+1)} \Vert_{1,\edges, 0}  \Vert w^n \Vert_{1,\edges, 0} \nonumber\\
  & \le \mu_{\max} \sum_{n=0}^{N-n_\tau}\deltat \Vert w^{n} \Vert_{1,\edges, 0}
  \sum_{k = 0}^{N-1} \deltat \displaystyle \mathbb{1}_{[t^n,t^{n+l-1}]}(t^{k}) \Vert u^{(k+1)} \Vert_{1,\edges, 0} \nonumber \\
  & \le C(T,\mu_0) \sqrt{\tau +\deltat} \Vert  \bfu \Vert_{L^2(0,T;\Hmeshzero)} \le C(T) \sqrt{\tau +\deltat} \label{a13}
\end{align}\\~
using $\bfu \in L^1(0,T; \Hmeshzero)$. The term $A_{14}(t)$ being associated to
$b_\edges((\rho\bfu)^{k+1}, \bfu^{k+1}, \bfw^n)$, we resort to Lemma \ref{lem-estimb} and \eqref{romin} to yield,
\[
A_{14}(t) = \sum_{i=1}^d A^{(i)}_{14}(t) \le \rho_{\max} \sum_{k=n}^{n+l-1} \deltat
\Vert \bfu^{(k+1)}\Vert_{1,\edges,0}^2 \Vert \bfw^n \Vert_{1,\edges,0}
\]
Therefore, by the Cauchy-Schwarz and  $\deltat l \leq \tau + \deltat$ inequalities
combined with $\mathbb{1}_{[t^n,t^{n+l-1}]}(t^{k+1}) = \mathbb{1}_{[t^{k-l+1},t^{k+1}]}(t^n)$,
we bound $\int_0^{T-\tau}  A_{14}(t) \dt$ as below,
\begin{align}
  \int_0^{T-\tau} A_{14}(t) \dt & \le \rho_{\max} \sum_{n=0}^{N-n_\tau}
  \sum_{k=n}^{n+l-1} \deltat^2 \Vert \bfu^{(k+1)}
    \Vert_{1,\edges,0}^2 \Vert \bfw^n \Vert_{1,\edges,0}\nonumber\\
 & \le \rho_{\max} \Big(\deltat  \sum_{k=0}^{N-1}   \Vert \bfu^{(k+1)}\nonumber
    \Vert_{1,\edges,0}^2\Big)\Big( \deltat  \sum_{n=0}^{N-n_\tau}
    \mathbb{1}_{[t^n,t^{n+l-1}]}(t^{k}) \Vert \bfw^n \Vert_{1,\edges,0}\Big)\nonumber\\
 & \label{a14} \le C(T,\rho_0) \sqrt{\tau + \deltat} 
\end{align}
\\

\noindent Similarly to $A_1^{(i)}$, we may write the term $A_2^{(i)}$ as,
\begin{align*}
  A^{(i)}_2(t) = \sum_{\edge \in \edgesinti} |D_\edge| u_{\edge}^{n+l}  w_{\edge}^{n}
  \sum_{k=n}^{n+l-1}\left( \rho^{n  }_{D_\edge} - \rho^{n+1}_{D_\edge} \right)
\end{align*}
Thanks to the discrete mass equation on the dual cells \eqref{mass-dual}
and to the discrete duality formula \eqref{eq:duality} we have:
\begin{align*}
  \begin{array}{r l r  l}
    A_2^{(i)}(t) & = & & \displaystyle\sum_{k=n}^{n+l-1} \deltat \sum_{\edge \in \edgesinti}
    \vert D_\edge \vert \dive_{D_\edge} ( \rho^{k+1}  \bfu^{k+1} ) u_{\edge}^{n+l}  w_{\edge}^n \dx\\
    & = & - & \displaystyle\sum_{k=n}^{n+l-1} \deltat \int_\Omega
    (\rho^{k+1}  \bfu^{k+1})_\edgesi \cdot \nabla_\edgesi  (u_i^{n+l}  w_i^n) \dx
  \end{array}
\end{align*}
 Now thanks to H\"older's inequality, we have :
\[
A_2^{(i)}(t) \le \rho_{\max} \deltat \sum_{k=n}^{n+l-1} \Vert \bfu^{(k+1)} \Vert_{L^6(\Omega)}
\Vert \nabla_\edgesi  (u_i^{n+l}  w_i^n) \Vert_{(L^{\frac 6 5}(\Omega))^d}
\]
We apply the Cauchy-Schwarz inequality then H\"older's inequality with $p=5$ for the gradient term.
This yields,
\begin{align*}
  A_2^{(i)}(t) & \le \displaystyle \rho_{\max} \Vert  \bfu  \Vert^{\frac 1 2}_{L^2(0,T;L^6(\Omega))}
  \sqrt{\deltat+\tau}   \Vert \nabla_\edgesi   (u_i^{n+l}  w_i^n)  \Vert_{(L^{\frac 6 5}(\Omega))^d}  \\
  & \le  \displaystyle \rho_{\max} | \Omega |^{\frac 1 6} \Vert  \bfu  \Vert^{\frac 1 2}_{L^2(0,T;L^6(\Omega))}
  \sqrt{\deltat+\tau}   \Vert \nabla_\edgesi   (u_i^{n+l}  w_i^n)  \Vert_{(L^{\frac 3 2}(\Omega))^d}
\end{align*}\\
Let us focus on the gradient term norm,\\
\[
\Vert \nabla_\edgesi (u_i^{n+l}  w_i^n)  \Vert_{(L^{\frac 3 2}(\Omega))^d}  \le \Vert  \nabla_\edgesi
( u_i^{n+l})  w_i^n \Vert_{(L^{\frac 3 2}(\Omega))^d} + \Vert  u_i^{n+l} \nabla_\edgesi  (w_i^n) \Vert_{(L^{\frac 3 2}(\Omega))^d}
\]\\
By H\"older's inequality with $p=4$ we have for both terms, 

\begin{align*}
  \Vert  \nabla_\edgesi  ( u_i^{n+l})  w_i^n \Vert_{(L^{\frac 3 2}(\Omega))^d}
   & \le  \Vert w_i^n \Vert_{L^{6}(\Omega)}  \Vert  \nabla_\edgesi u_i^{n+l} \Vert_{(L^{2}(\Omega))^d}\\
   & \le \Vert  \nabla_\edgesi u_i^{n+l} \Vert_{(L^{2}(\Omega))^d}^2 + \Vert w_i^n \Vert_{L^{6}(\Omega)}^2
\end{align*}

\begin{align*}
  \Vert  u_i^{n+l} \nabla_\edgesi  (w_i^n) \Vert_{(L^{\frac 3 2}(\Omega))^d}
   & \le  \Vert  u_i^{n+l} \Vert_{L^{6}(\Omega)} \Vert  \nabla_\edgesi  w_i^n \Vert_{(L^{2}(\Omega))^d}\\
   & \le \Vert  \nabla_\edgesi  w_i^n \Vert_{(L^{2}(\Omega))^d}^2 + \Vert  u_i^{n+l} \Vert_{L^{6}(\Omega)}^2
\end{align*}\\

\noindent Hence we were able to retrieve the norms in $\Hmeshizero$ of $u_i^{n+l}$
and $w^n$. Owing to the injection of $H^1_0(\Omega)$ into $L^6(\Omega)$ the estimates are sufficient.
We still require to perform the integration over $(0,T-\tau)$ on $A_2^{(i)}(t)$,\\
\begin{align*}
  \int_0^{T-\tau} \! \!A_2^{(i)}(t) \dt \le & C(\eta_\mesh, \rho_0)\sqrt{\deltat+\tau}
  \Big( \Vert u_i \Vert^2_{L^2(0,T;L^6(\Omega))}  +\Vert   u_i  \Vert_{L^2(0,T;  \Hmeshizero)}^2   \Big)
\end{align*}\\
Summing over $i= 1, \ldots, d$ yields the bound on $A_2(t)$:
\begin{align}
  \int_0^{T-\tau} A_2(t) \dt \le C(\eta_\mesh, \rho_0)\sqrt{\deltat+\tau} \label{a2}
\end{align}\\
%
We may now conclude with the bound on $\sum_{i=1}^{d} \int_0^{T-\tau} (A_1^{(i)}(t) + A_2^{(i)}(t)) \dt$ by summing
(\ref{a11}-i), (\ref{a13}-i) over $i= 1,\ldots, d$ with \eqref{a14} and \eqref{a2}. \end{proof}
%

\subsection{Convergence of the discrete scheme as \texorpdfstring{$\bs{\deltat, h_{\mathcal M} \rightarrow 0}$}{dt}.}

Before introducing the main theorem for the convergence of the
discrete scheme, we state a few results required for the passage to
the limit in the weak discrete equations : namely the consistency of
the discrete derivatives and the convergence of the
reconstructions. The former will assure the accuracy of the
approximation of the discrete derivatives, and using Taylor expansions
we will be able to recover the order of the truncation errors. The
latter will allow us to control the convergence of convex combinations
of the quantities $(\rho_m,\bfu_m)$ defined on $\mathcal{D}_m$.

\begin{lemma}[\textbf{Consistency of the discrete derivatives}] \label{lem-consistance}
Let $\disc=(\mesh, \edges)$ be an admissible MAC mesh and $\varphi \in C^\infty_c([0,T)\times \Omega)$.
For some $n \in \{0,N\}$ we note
$\varphi^n_\mesh = \Pi_\mesh \varphi(t^n,\cdot)$ and $\varphi^n_\edgesi = \Pi_\edgesi \varphi(t^n,\cdot)$
the projections of $\varphi$ given at $t^n$ on $\mesh$ and $\edgesi$ respectively.
Let
\[
\varphi_\mesh  = \sum_{n=0}^{N-1} \varphi^{n+1}_\mesh \characteristic_{(t^n,t^{n+1}]}
\quad \mbox{and} \quad \varphi_\edgesi = \sum_{n=0}^{N-1} \varphi^{n+1}_\edgesi \characteristic_{(t^n,t^{n+1}]}
\]
Then, for all $p\in [1,\infty]$,
\begin{flalign}\nonumber
\begin{array}{ l l  l l }
  & \|\eth_t \varphi_\mesh - \partial_t \varphi\|_{L^p([0,T]\times\Omega)}
  & + \quad \|\nabla_\edges \varphi_\mesh - \nabla \varphi\|_{L^p([0,T]\times\Omega)^d}
  & \leq C_1 (\deltat +  h_{\mesh})\vspace{10pt}\\
  & \|\eth_t \varphi_\edgesi - \partial_t \varphi\|_{L^p([0,T]\times\Omega)}
  & + \quad \|\nabla_\edgesd \varphi_\edgesi - \nabla \varphi\|_{L^p([0,T]\times\Omega)^d}
  & \leq C_1 (\deltat +  h_{\mesh}) \vspace{10pt}\\
  & \|\varphi^0_\mesh - \varphi(0,\cdot)\|_{L^p(\Omega)}
  & + \quad \|\varphi^0_\edges - \varphi(0,\cdot)\|_{L^p(\Omega)^d}
  & \leq C_2  h_{\mesh}
\end{array}
\end{flalign}
with $C_1$ and $C_2$ depending only on $\varphi,\Omega,T,p$.
\end{lemma}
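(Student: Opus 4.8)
The plan is to reduce every estimate to a pointwise bound in $(t,\bfx)$ — of order $\deltat+h_\mesh$ for the first two rows, of order $h_\mesh$ for the initial terms — and then read off the $L^p([0,T]\times\Omega)$ bounds for free: the discrete objects are piecewise constant and the exact ones are bounded, so a uniform pointwise bound $\leq A$ gives an $L^p$ bound $\leq A\,(T\lvert\Omega\rvert)^{1/p}$, the exponent $p$ entering the constants only through this volume factor (and for $p=\infty$ the pointwise bound \emph{is} the assertion). Throughout, $C$ will stand for a constant depending only on $\Omega$, $T$ and finitely many sup-norms of derivatives of $\varphi$; compact support of $\varphi$ will be used to kill boundary contributions, since once $h_\mesh<\mathrm{dist}(\mathrm{supp}\,\varphi,\partial\Omega)$ every cell touching an exterior face carries $\varphi\equiv 0$, so every exterior discrete derivative vanishes and coincides with the exact one.

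First I would dispatch the time derivatives and the initial projections, which are routine. On a slab $(t^n,t^{n+1}]$ and a primal cell $K$, $(\eth_t\varphi_\mesh)_K=\frac1{|K|}\int_K\frac{\varphi(t^{n+1},\bfy)-\varphi(t^n,\bfy)}{\deltat}\,\mathrm{d}\bfy$; writing the time difference quotient as $\partial_t\varphi(\eta_\bfy,\bfy)$ for some intermediate $\eta_\bfy$ and comparing with $\partial_t\varphi(t,\bfx)$ at $(t,\bfx)\in(t^n,t^{n+1}]\times K$, the time error is $\leq\deltat\,\|\partial_{tt}\varphi\|_\infty$ and the spatial averaging error is $\leq\diam(K)\,\|\nabla\partial_t\varphi\|_\infty\leq h_\mesh\|\nabla\partial_t\varphi\|_\infty$, giving the cellwise bound $C(\deltat+h_\mesh)$. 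The same computation applies to $\eth_t\varphi_\edgesi$ (averaging over $D_\edge$, whose diameter is still $O(h_\mesh)$) and, with the time step dropped, to $\varphi^0_\mesh-\varphi(0,\cdot)$ and $\varphi^0_\edges-\varphi(0,\cdot)$, which are bounded by $Ch_\mesh$.

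The crux is the consistency of the discrete gradients, and the plan hinges on one structural feature of conforming structured (tensor-product) Cartesian grids: the two cells entering any discrete partial derivative $(\eth_j\cdot)$ — two primal cells $K,L$ for $\nabla_\edges$, two dual cells $D_\edge,D_{\edge'}$ for $\nabla_\edgesd$ — are aligned along $\bfe_j$ and share a common transverse extent $S$ (a $(d-1)$-rectangle orthogonal to $\bfe_j$). Averaging $\varphi(t^{n+1},\cdot)$ over $S$ then produces a one-variable smooth function $g$, the \emph{same} for both cells, with $\|g\|_{C^k}\leq\|\varphi\|_{C^k}$, and the discrete derivative collapses to a one-dimensional difference quotient $\frac{\varphi_L-\varphi_K}{d}$ in which $\varphi_K,\varphi_L$ are averages of $g$ over the two stacked intervals of widths $h_-,h_+\leq h_\mesh$. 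A one-dimensional Taylor expansion with integral remainder, around the separating abscissa $b$, gives $\big|\frac1{h_+}\int_b^{b+h_+}g-g(b)-g'(b)\tfrac{h_+}2\big|\leq C\|g''\|_\infty h_+^2$ and its mirror image for $h_-$; the $g(b)$ terms cancel, the numerator remainder is $\leq C\|g''\|_\infty(h_+^2+h_-^2)\leq C\|g''\|_\infty(h_++h_-)^2$, and since in the primal case and in the $j\neq i$ dual case $d=\tfrac12(h_++h_-)$, division leaves a deviation from $g'(b)$ of order $\|g''\|_\infty(h_++h_-)\leq Ch_\mesh$ (the third sub-case $j=i$, where $d$ is instead the primal width, needs a centred-difference variant and yields the same order). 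Finally $g'(b)=\frac1{|S|}\int_S\partial_j\varphi(t^{n+1},\cdot)$ differs from $\partial_j\varphi(t^{n+1},\bfx)$, for $\bfx$ in the corresponding cell ($D_\edge$ for $\nabla_\edges$, $D_\edged$ for $\nabla_\edgesd$), by $\leq\diam(S)\|D^2\varphi\|_\infty\leq Ch_\mesh$; adding the time-projection error of the previous paragraph yields the cellwise bound $C(\deltat+h_\mesh)$, hence the $L^p$ estimate after summing over cells.

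The hard part is precisely this gradient estimate, for two reasons. First, the constant must not involve the mesh-regularity number $\eta_\mesh$: a naive multivariate Taylor expansion on each cell produces transverse error terms which, divided by a possibly tiny cell width in the $\bfe_j$-direction, are not uniformly $O(h_\mesh)$; the reduction to a one-dimensional difference quotient via the shared cross-section $S$ removes them, because those transverse contributions are literally identical on the two cells and cancel exactly. Second, in the sub-case $j=i$ of $\nabla_\edgesd$ one must be careful that the point at which the discrete value is located is the barycentre of the set over which $\varphi$ is projected, so that $g$ gets evaluated exactly where $g'$ is taken, since on a non-uniform grid that point need not be the barycentre of the full dual cell $D_\edge$ and the denominator $d(\bfx_\edge,\bfx_{\edge'})$ there is the primal width rather than a half-sum — it is again the tensor-product structure (all faces of a cell orthogonal to $\bfe_i$ having the same projection) that lets one reduce to a plain centred 1D difference and avoid an uncontrolled $O(1)$ error. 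Once these points are in place, the remaining work — bookkeeping the orientation signs $\bfn_{\edge,\edged}\cdot\bfe_j=\pm1$, treating the exterior faces, and summing — is routine.
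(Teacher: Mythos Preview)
Your approach is the paper's approach — pointwise Taylor bounds on each cell, then $L^p$ for free — but you carry it much further: the paper writes out only the time–derivative case and declares the rest ``straightforward''. Your cross-section reduction (averaging over the shared transverse face $S$ to collapse the difference quotient to one dimension) is the right way to see why transverse errors cancel and why $\eta_\mesh$ does not enter for $\nabla_\edges\varphi_\mesh$ and for the off-diagonal pieces of $\nabla_\edgesd\varphi_\edgesi$.

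The gap is in the diagonal sub-case $j=i$ of $\nabla_\edgesd\varphi_\edgesi$. You flag it as delicate and then assert that ``the tensor-product structure (all faces of a cell orthogonal to $\bfe_i$ having the same projection) lets one reduce to a plain centred 1D difference''. That would be true if $\varphi_\edge$ were the \emph{face} average $\widetilde\Pi_\edges^{(i)}\varphi$ (then $\varphi_\edge=g(a)$, $\varphi_{\edge'}=g(b)$, and $(g(b)-g(a))/(b-a)$ is centred). But the lemma is stated for $\Pi_\edgesi$, the \emph{dual-cell} average over $D_\edge$, and there the claim fails. Take $g(x)=x$ and three consecutive cells of widths $h_-,h_K,h_+$ with faces $\edge$ at $x=0$, $\edge'$ at $x=h_K$; then $\varphi_\edge=(h_K-h_-)/4$, $\varphi_{\edge'}=(3h_K+h_+)/4$, and
\[
(\eth_i\varphi_\edgesi)_K \;=\; \frac{\varphi_{\edge'}-\varphi_\edge}{h_K}\;=\;\frac{2h_K+h_-+h_+}{4h_K},
\]
which misses $g'\equiv1$ by $(h_-+h_+-2h_K)/(4h_K)$ — an $O(1)$ error once $h_\pm/h_K$ is large. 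So with the projection as written the constant does depend on the mesh regularity; the tensor-product structure does not rescue this case. The paper's own proof never confronts this (it only treats $\eth_t$), and in Step~8, where $\nabla_\edgesd$ of a test function is actually used, the interpolant is the Fortin operator $\widetilde\Pi_\edges$, for which your argument \emph{is} valid. Since the convergence theorem assumes $\eta_{\mesh_m}\le\eta$, nothing downstream is harmed, but you should either state the bound for $\widetilde\Pi_\edges$ or allow the constant to depend on $\eta_\mesh$.
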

\begin{proof}
Thanks to the regularity of $\varphi$, we may use Taylor's inequalities retrieve
 the desired bounds. Indeed,
\[
\|\eth_t \varphi_\mesh - \partial_t \varphi\|^p_{L^p} =
\sum_{n=0}^{N-1} \sum_{K\in\mesh} \int_{t^n}^{t^{n+1}}\int_K
\left|\frac{\varphi^{n+1}_K-\varphi^{n}_K}{\deltat} - \partial_t \varphi(t,\bfx)\right|^p \dx \dt
\]
Then there exists $C_1$ depending only on $\partial_{tt} \varphi$ such that,
\[
\|\eth_t \varphi_\mesh - \partial_t \varphi\|^p_{L^p} =
\sum_{n=0}^{N-1} \sum_{K\in\mesh} \int_{t^n}^{t^{n+1}}\int_K
\left| \partial_t \varphi(t^n,\bfx_K) - \partial_t \varphi(t,\bfx) + C_1\deltat \right|^p \dx \dt
\]
Using a Taylor inequality once again yields the constant $C_2$ depending only
on $\partial_{tt} \varphi$ and $\partial_{x}\partial_{t} \varphi$ and it follows
that,
\[
\|\eth_t \varphi_\mesh - \partial_t \varphi\|^p_{L^p} =
\sum_{n=0}^{N-1} \sum_{K\in\mesh} \int_{t^n}^{t^{n+1}}\int_K \left| C_2(\deltat + h_\mesh) + C_1\deltat \right|^p \dx \dt
\]
which concludes the bound on $\|\eth_t \varphi_\mesh - \partial_t \varphi\|_{L^p([0,T]\times\Omega)}$.
By a similar process and thanks to the regularity of $\varphi$, the derivation
of the remaining bounds is straightforward.
\end{proof}

\begin{lemma}[\textbf{Convergence of reconstructions in \texorpdfstring{$\bs{L^p, p\in [1,\infty)}$}{dt}}]
\label{lem-conv-recons}
Let $p\in [1,\infty)$ and a sequence $(q_m)_\mnn$ be such that for all $\mnn$, $q_m \in L_{\mesh_m}$.
If $q_m \longrightarrow \bar q$  as $m\rightarrow\infty$ in $L^p(\Omega)$ then,
\[
\mathcal R_{\mesh_m}^{\edges_m^{(i)}} q_m \rightarrow  \bar q\quad \mbox{as}\quad  m\rightarrow\infty \quad \mbox{in} \quad L^p(\Omega)
\]
Similarly, for a sequence of velocities $(v_m)_\mnn$ verifying $v_m \in H_{\edges_m^{(i)}}$ for all
$\mnn$ and $v_m$ strongly converging to $\bar v$ as $m\rightarrow\infty$ in $L^p(\Omega)$ then,
\[
\mathcal R^{\mesh_m}_{\edges_m^{(i)}} v_m \rightarrow  \bar v\quad \mbox{as}\quad  m\rightarrow\infty \quad \mbox{in} \quad L^p(\Omega)
\]
\[
\mathcal R^{(i,j)}_{\edgesd_m} v_m \rightarrow  \bar v\quad \mbox{as}\quad  m\rightarrow\infty \quad \mbox{in} \quad L^p(\Omega)
\]
\end{lemma}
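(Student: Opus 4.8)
The plan is to combine the linearity and uniform $L^p$-stability of the three reconstruction operators with a density argument, using the projection of a smooth function as a bridge between the (mesh-dependent) spaces $L_{\mesh_m}$, resp.\ $H_{\edges_m^{(i)}}$. First I would record the uniform stability: by Lemma~\ref{lem-stab-recons} the operators $\mathcal R_{\mesh_m}^{\edges_m^{(i)}}$ and $\mathcal R^{\mesh_m}_{\edges_m^{(i)}}$ are contractions on $L^p(\Omega)$, and by Lemma~\ref{lem-stab-reconsij} one has $\|\mathcal R^{(i,j)}_{\edgesd_m} v\|_{L^p(\Omega)}\le C_{\eta_{\mesh_m}}\|v\|_{L^p(\Omega)}$; under the regularity hypothesis on the sequence of meshes the constant $C:=\sup_m\max(1,C_{\eta_{\mesh_m}})$ is finite. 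Write generically $\mathcal R_m$ for one of these operators and $\Pi_m$ for the corresponding projection onto its source space ($\Pi_{\mesh_m}$ for the first, $\Pi_{\edges_m^{(i)}}$ for the other two), which is itself $L^p$-stable with norm at most $1$ (Remark following the definition of $\Pi_\mesh$).

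The crux is a consistency statement for $\mathcal R_m\circ\Pi_m$ on smooth functions: for $\varphi\in C^\infty_c(\Omega)$ I would show $\mathcal R_m(\Pi_m\varphi)\to\varphi$ in $L^p(\Omega)$. Indeed, on each cell $D_\edged$ (resp.\ $D_\edge$, $K$) the value of $\mathcal R_m(\Pi_m\varphi)$ is a convex combination of means of $\varphi$ over primal or dual cells contained in a neighbourhood of $D_\edged$ of diameter at most $C'h_{\mesh_m}$, where $C'$ depends only on $\eta_{\mesh_m}$ and so stays bounded along the sequence. Hence, with $\omega_\varphi$ the modulus of continuity of $\varphi$,
\[
\|\mathcal R_m(\Pi_m\varphi)-\varphi\|_{L^\infty(\Omega)}\le \omega_\varphi(C'h_{\mesh_m})\longrightarrow 0\quad\text{as }m\to\infty,
\]
and since $\Omega$ is bounded this gives $L^p$-convergence. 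The same argument (or the bound of Lemma~\ref{lem-consistance} applied to a time-independent $\varphi$) also yields $\|\Pi_m\varphi-\varphi\|_{L^p(\Omega)}\to 0$.

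With these two facts the conclusion follows by a three-$\varepsilon$ argument. Given $\varepsilon>0$, choose $\varphi\in C^\infty_c(\Omega)$ with $\|\bar q-\varphi\|_{L^p(\Omega)}\le\varepsilon$ (possible since $p<\infty$) and estimate
\[
\|\mathcal R_m q_m-\bar q\|_{L^p(\Omega)}\le \|\mathcal R_m(q_m-\Pi_m\varphi)\|_{L^p(\Omega)} + \|\mathcal R_m(\Pi_m\varphi)-\varphi\|_{L^p(\Omega)} + \|\varphi-\bar q\|_{L^p(\Omega)}.
\]
The first term is $\le C\|q_m-\Pi_m\varphi\|_{L^p(\Omega)}\le C\big(\|q_m-\bar q\|_{L^p(\Omega)}+\|\bar q-\varphi\|_{L^p(\Omega)}+\|\varphi-\Pi_m\varphi\|_{L^p(\Omega)}\big)$. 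Passing to the limit $m\to\infty$ and using the hypothesis $q_m\to\bar q$ together with the two consistency limits above, one gets $\limsup_m\|\mathcal R_m q_m-\bar q\|_{L^p(\Omega)}\le(C+1)\varepsilon$; as $\varepsilon$ is arbitrary, the left-hand side vanishes. The statements for $\mathcal R^{\mesh_m}_{\edges_m^{(i)}} v_m$ and $\mathcal R^{(i,j)}_{\edgesd_m} v_m$ are proved by exactly the same chain of inequalities, with $\Pi_{\edges_m^{(i)}}$ playing the role of $\Pi_{\mesh_m}$.

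I expect the only non-routine point to be the consistency estimate $\mathcal R_m(\Pi_m\varphi)\to\varphi$, and more precisely the bookkeeping showing that both the geometric constant $C'$ bounding the stencil diameter of a dual(-dual) cell and the stability constant $C_{\eta_{\mesh_m}}$ of Lemma~\ref{lem-stab-reconsij} stay bounded along the sequence $(\mathcal D_m)_\mnn$; granting that, the remainder is the standard density/stability bootstrap and carries no difficulty.
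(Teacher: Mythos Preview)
Your proof is correct and follows essentially the same approach as the paper: both combine the uniform $L^p$-stability of the reconstruction operators (Lemmas~\ref{lem-stab-recons} and~\ref{lem-stab-reconsij}) with a density argument and a consistency bound $\|\mathcal R_m(\Pi_m\varphi)-\varphi\|_{L^p}\to 0$ for smooth $\varphi$. The only cosmetic differences are that the paper splits the triangle inequality into four pieces (inserting $\mathcal R_m\Pi_m\bar q$ as an extra intermediate) whereas you use three, and the paper invokes a Taylor inequality for the consistency step where you use the modulus of continuity; both are equivalent here.
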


\begin{proof}
The proofs being similar, let us show the first convergence result only.
Let $\varepsilon > 0$. Thanks to the density of $C^\infty_c(\Omega)$ in $L^p(\Omega)$,
let $\varphi \in C^\infty_c(\Omega)$ be the function verifying $\|\varphi  - \bar q\|_{L^p} < \varepsilon$. \\
\\
Thus using $\Pi_{\mesh_m}\bar q $ and $\Pi_{\mesh_m} \varphi$, respectively the
projection of $\bar q$ and $\varphi$ on $L_{\mesh_m}$ and noting that
$\mathcal R_{\mesh_m}^{\edges_m^{(i)}} q_m = (\mathcal R_{\mesh_m}^{\edges_m^{(i)}} \circ \Pi_{\mesh_m}) q_m$ yields,
\begin{flalign}\nonumber
 \begin{array}{r  l }
\| \mathcal R_{\mesh_m}^{\edges_m^{(i)}} q_m  - \bar q\|_{L^p}  \leq &
\|(\mathcal R_{\mesh_m}^{\edges_m^{(i)}} \circ \Pi_{\mesh_m}) q_m-(\mathcal R_{\mesh_m}^{\edges_m^{(i)}} \circ \Pi_{\mesh_m})\bar q \|_{L^p}\vspace{10pt}\\
& \quad + \|(\mathcal R_{\mesh_m}^{\edges_m^{(i)}} \circ \Pi_{\mesh_m}) \bar q -
(\mathcal R_{\mesh_m}^{\edges_m^{(i)}} \circ \Pi_{\mesh_m}) \varphi\|_{L^p} \vspace{10pt}\\
& \quad  \quad + \|(\mathcal R_{\mesh_m}^{\edges_m^{(i)}} \circ \Pi_{\mesh_m})\varphi - \varphi \|_{L^p} + \|\varphi - \bar q \|_{L^p}
 \end{array}
\end{flalign}
Using \eqref{fortin-prop1} and Lemma \ref{lem-stab-recons}, we conclude that the
 second and last terms in the right handside are bounded by $\varepsilon$. Moreover,
 from the strong convergence of $(q_m)_\mnn$ we may find $M_1$ such that for all
 $m\geq M_1$ we have $\|q_m  - \bar q \|_{L^p} < \varepsilon$.\\\\ From the
 regularity of $\varphi$ we resort to a Taylor's inequality to obtain a constant
 $C$ depending only on $p,\varphi, \Omega$ and $T$ such that
\[
\|(\mathcal R_{\mesh_m}^{\edges_m^{(i)}} \circ \Pi_{\mesh_m})\varphi - \varphi \|_{L^p} \leq C h_{\mesh_m}.
\]
Then there exists $M_2$ verifying $\|(\mathcal R_{\mesh_m}^{\edges_m^{(i)}} \circ \Pi_{\mesh_m})\varphi - \varphi \|_{L^p} < \varepsilon$
for all  $m\geq M_2$. Taking $M = \max{(M_1,M_2)}$, we can then conclude with
the strong convergence of $(R_{\mesh_m}^{\edges_m^{(i)}} q_m)_\mnn$ towards
$\bar q$ in $L^p([0,T]\times\Omega)$.
\end{proof}

\begin{remark}
Therefore, given the strong convergence of a discrete solution to a limit in
$L^p([0,T]\times\Omega)$, with $1\leq p < \infty$, we can conclude with the
strong convergence of its reconstruction towards this same limit.
\end{remark}

\begin{theorem}[Convergence of the discrete scheme]
\label{conv:ins}
Let $(\deltat_{m})_{m\in \xN}$ be a sequence of
time steps and $(\mathcal{D}_m)_\mnn= (\mathcal{M}_m,\mathcal{E}_m)_\mnn$ a sequence of
MAC grids (in the sense of Definition \ref{def:MACgrid}) such that  $\deltat_{m}\rightarrow 0$
and $h_{\mesh_m} \to 0$  as $m \to +\infty$~. We assume that there exists $\eta >0$
controlling the regularity of every MAC mesh in the sequence :  $\eta_{\mesh_m} \le
\eta$ for any $\mnn$ -- with $\eta_{\mesh_m}$ defined by \eqref{regmesh}.
Let $(\rho_m,\bfu_m)$ be a solution to  \eqref{eq:scheme:ro} for $\deltat=\deltat_{m}$ and $\mathcal{D}=\mathcal{D}_m$.
Then there exists $\bar\rho$ with $\rho_{\min}\leq\bar \rho\leq\rho_{\max}$
and $\bar \bfu \in L^{2}(0,T; \boldsymbol{E}(\Omega))$  verifying up to a subsequence:
\begin{list}{-}{\itemsep=0.ex \topsep=0.5ex \leftmargin=1.cm \labelwidth=0.7cm \labelsep=0.3cm \itemindent=0.cm}
\item the sequence $(\bfu_m)_\mnn$ converges to $\bar \bfu$ in $L^{2}(0,T; L^{2}(\Omega)^{d})$,
\item the sequence $(\rho_m)_\mnn$ converges to $\bar \rho$ in $ \in L^{2}(0,T;L^2(\Omega))$,
\item $(\bar \rho,\bar \bfu)$ is a solution to the weak formulation \eqref{massweak} and \eqref{qdmweak}.
\end{list}
\end{theorem}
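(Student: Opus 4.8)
The plan is to establish the three claims in sequence, obtaining first the required compactness, then passing to the limit in each of the two weak equations. First I would collect the \emph{a priori} bounds already proven: from Lemma~\ref{lem:estim-density} the sequence $(\rho_m)$ is bounded in $L^\infty(\Omega\times(0,T))$ with $\rho_{\min}\le\rho_m\le\rho_{\max}$, and from Lemma~\ref{lem:est-vit-ro} the sequence $(\bfu_m)$ is bounded in $L^2(0,T;\Hmeshzero)$ and in $L^\infty(0,T;L^2(\Omega)^d)$; the discrete Korn and Poincar\'e inequalities (Lemmas~\ref{korn}, \ref{poincare}) together with the discrete Sobolev embedding make these bounds uniform in $m$. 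For the velocity I would invoke the discrete Aubin--Lions--type compactness theorem from the Annex (Theorem~\ref{theo-compact-space}): the space bound is the $L^2(0,T;\Hmeshzero)$ estimate, and the time-translate bound is exactly Lemma~\ref{lem:translates}, which gives $\int_0^{T-\tau}\|\bfu_m(\cdot+\tau)-\bfu_m\|_{L^2}^2\le C\sqrt{\tau+\deltat_m}$. Hence, up to a subsequence, $\bfu_m\to\bar\bfu$ strongly in $L^2(0,T;L^2(\Omega)^d)$, and by the $L^2(0,T;\Hmeshzero)$ bound together with consistency of $\nabla_\edgesd$ (Lemma~\ref{lem-consistance}), $\bar\bfu\in L^2(0,T;H^1_0(\Omega)^d)$; passing to the limit in the discrete divergence constraint \eqref{schemediv:dis} against $\Pi_\mesh\psi$ for $\psi\in C_c^\infty$ yields $\dive\bar\bfu=0$, so $\bar\bfu\in L^2(0,T;\boldsymbol E(\Omega))$.

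For the density, the bound $\rho_{\min}\le\rho_m\le\rho_{\max}$ gives weak-$*$ convergence in $L^\infty$ to some $\bar\rho$ with the same bounds; to upgrade to strong $L^2$ convergence I would use the discrete mass equation \eqref{eq:mass-rv} to obtain a bound on time translates of $\rho_m$ (testing against $\rho_m$ and using the weak $BV$ estimate \eqref{eq:bvweak}), combined with the weak $BV$ estimate itself to control space translates, so that a Kolmogorov-type argument (again via the Annex compactness result, or directly) yields $\rho_m\to\bar\rho$ in $L^2(0,T;L^2(\Omega))$ up to a further subsequence. This is essentially the argument of \cite{latche-saleh-17} transposed to the MAC setting.

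Next I would pass to the limit in the weak mass balance \eqref{massweak}: fix $\varphi\in C_c^\infty(\Omega\times[0,T))$, test the discrete mass equation against $\varphi_\mesh=\Pi_\mesh\varphi$, perform discrete integration by parts so that the flux term becomes $-\sum\medge\,\rho_\edge^{n+1}\ucvedge^{n+1}(\varphi_L-\varphi_K)$, and recognize this as $\int\rho_m\bfu_m\cdot\nabla_\edges\varphi_\mesh$ up to an error controlled by the weak $BV$ estimate (which absorbs the upwind/centered discrepancy in $\rho_\edge$) and by $h_{\mesh_m}$. Using the strong convergence $\rho_m\to\bar\rho$, $\bfu_m\to\bar\bfu$ in $L^2\times L^2$, the consistency of $\nabla_\edges\varphi_\mesh\to\nabla\varphi$ and $\eth_t\varphi_\mesh\to\partial_t\varphi$ (Lemma~\ref{lem-consistance}), and the convergence of the reconstructions $\mathcal R_{\mesh_m}^{\edges_m^{(i)}}\rho_m\to\bar\rho$ (Lemma~\ref{lem-conv-recons}) for the face-averaged densities, one recovers \eqref{massweak}, the initial term coming from $\rho^0=\Pi_\mesh\rho_0$.

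Finally I would pass to the limit in \eqref{qdmweak}: fix a divergence-free $\bfv\in C_c^\infty(\Omega\times[0,T))^d$ and test \eqref{qdm:weak} against the Fortin interpolate $\widetilde\Pi_\edges\bfv$ (so the pressure term disappears by \eqref{fortin-prop2}--\eqref{fortin-prop3} and the discrete divergence-freeness of $\bfv$'s interpolate). The diffusion term handled by Lemma~\ref{duality-dual} becomes $\int\mu_\edgesd^m\mathbf D_\edgesd(\bfu_m):\mathbf D_\edgesd(\widetilde\Pi_\edges\bfv)$; since $\mu_m=\mu(\rho_m)$ with $\mu$ continuous and $\rho_m\to\bar\rho$ strongly, $\mu_m\to\mu(\bar\rho)$ strongly in every $L^q$, $q<\infty$ (dominated convergence), $\mathbf D_\edgesd(\widetilde\Pi_\edges\bfv)\to D(\bfv)$ strongly and $\mathbf D_\edgesd(\bfu_m)\rightharpoonup D(\bar\bfu)$ weakly in $L^2$, so the product converges to $\int\mu(\bar\rho)D(\bar\bfu):D(\bfv)$. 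The time-derivative term is rewritten by discrete summation by parts as $-\int\rho_m\bfu_m\cdot\eth_t(\widetilde\Pi_\edges\bfv)$ plus an initial contribution $\int\rho^0\bfu^0\cdot\bfv(0,\cdot)$, and converges using $\rho_m\bfu_m\to\bar\rho\bar\bfu$ (product of strong-$L^2$ and bounded-$L^\infty$, or strong times strong) against the consistent $\eth_t$ of the interpolate. The convection term, via the reformulation $b_\edges$ of Lemma~\ref{lem-new-b}, is $-\sum_j\int(\mathcal R^\rho)(\mathcal R^u u_j)(\mathcal R^v v_i)\,\eth_j w_i$; here I would use Lemma~\ref{lem-conv-recons} to pass each reconstruction to its limit ($\bar\rho$, $\bar u_j$, $v_i$ respectively) strongly in the appropriate $L^p$, the $L^4$ control from the discrete Sobolev embedding to make the triple product converge in $L^{4/3}$, and $\eth_j(\widetilde\Pi_\edges\bfv)\to\partial_j\bfv$ strongly in $L^4$, yielding $\int\bar\rho\bar\bfu\otimes\bar\bfu:\nabla\bfv$; the source term converges since $\bff_\edges^{n+1}=\Pi_\edges\bff(t^{n+1},\cdot)\to\bff$ in $L^2(0,T;L^2)$. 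Assembling these gives \eqref{qdmweak}.

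\textbf{Main obstacle.}
I expect the genuinely hard part to be the convection term in the momentum equation --- specifically, controlling the \emph{consistency error} between the discrete upwind/staggered dual flux $F_{\edge,\edged}$ (with its convex-combination densities $\rho_\edged$ and upwinded velocities $u_\edged$) and the continuous flux $\bar\rho\bar\bfu\otimes\bar\bfu$, in the absence of any regularity on $\bar\rho$. The upwinding introduces $O(h)$-type perturbations that must be absorbed using the weak $BV$ estimate \eqref{eq:bvweak} and the uniform $L^4$ bound on $\bfu_m$, and one must be careful that the reconstruction operators $(\mathcal R_\edgesd^{(i,j)})^\rho$, $(\mathcal R_\edgesd^{(i,j)})^u$, $(\mathcal R_\edgesd^{(i,j)})^v$ all converge to the \emph{same} limits $\bar\rho,\bar u_j,v_i$ despite being different convex combinations on the staggered sub-meshes --- this is where Lemma~\ref{lem-conv-recons} and the mesh-regularity bound $\eta_{\mesh_m}\le\eta$ are essential. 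A secondary difficulty is the strong $L^2$ convergence of $\rho_m$: the weak $BV$ estimate is only a \emph{weak} bound (it degenerates where $u_{K,\edge}$ vanishes), so obtaining genuine space-translate control requires the refined argument of \cite[Lemma~5.5]{FV-book} rather than a naive BV estimate, and this strong convergence is in turn what makes $\mu(\rho_m)\to\mu(\bar\rho)$ and the limit of $\rho_m\bfu_m\otimes\bfu_m$ work.
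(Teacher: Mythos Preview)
Your overall architecture is right and most steps (velocity compactness via Theorem~\ref{theo-compact-space} and Lemma~\ref{lem:translates}, the use of the Fortin interpolate $\widetilde\Pi_\edges\bfv$ as test function to kill the pressure, the handling of the diffusion and convection terms via Lemmas~\ref{lem-new-b}, \ref{lem-conv-recons}, \ref{lem-consistance}) match the paper's approach closely. But there is a genuine gap in your treatment of the density, and it propagates through the ordering of the argument.

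You propose to obtain strong $L^2$ convergence of $\rho_m$ by a Kolmogorov argument, using the discrete mass equation for time translates and the weak $BV$ estimate \eqref{eq:bvweak} for space translates. This does not work: as you yourself note, \eqref{eq:bvweak} is weighted by $|u_{K,\edge}|$ and gives \emph{no} control on density oscillations where the velocity vanishes, so it cannot be upgraded to a uniform space-translate bound (the cited \cite[Lemma~5.5]{FV-book} is for scalar conservation laws with strictly monotone flux and does not apply here). The paper takes a completely different route, which is in fact what \cite{latche-saleh-17} actually does: one first passes to the limit in the mass equation using only the \emph{weak-$*$} convergence of $\rho_m$ (this requires rewriting the convection term so that $\rho_m$ is tested against a strongly convergent product $\mathcal R^{\mesh}_{\edgesi}(u_{i,m}\,\eth_i\psi_m)$, with the upwind remainder absorbed by the weak $BV$ estimate), then establishes $\bar\bfu\in L^2(0,T;H^1_0)$ with $\dive\bar\bfu=0$. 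Only then can one invoke DiPerna--Lions renormalization (Theorems~\ref{renormalisation} and~\ref{unicite-transport}): since $\bar\rho$ solves the transport equation with a divergence-free $H^1$ velocity, taking $\beta(\zeta)=\zeta^2$ gives $\|\bar\rho(\cdot,t)\|_{L^2(\Omega)}=\|\rho_0\|_{L^2(\Omega)}$ for all $t$, while \eqref{eq:rhobv} gives $\|\rho_m(\cdot,t)\|_{L^2(\Omega)}\le\|\rho_0\|_{L^2(\Omega)}$; hence $\limsup_m\|\rho_m\|_{L^2}\le\|\bar\rho\|_{L^2}$, and together with weak convergence this yields strong $L^2$ convergence. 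Your plan has the dependency reversed --- you want strong $\rho_m$ convergence \emph{before} the mass limit --- but the only available mechanism for it requires the mass limit and the regularity of $\bar\bfu$ as inputs. Once this is fixed, your momentum-equation step goes through essentially as written; the convection term you flag as the main obstacle is in fact routine once strong convergence of $\rho_m$ and $\bfu_m$ is in hand.
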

\medskip
\paragraph{Step 1- Weak convergence of \texorpdfstring{$\bs{(\rho_m)_\mnn\ \text{in}\ L^{\infty} ( (0,T) \times \Omega )}$}{dt} and maximum principle.}
Let us consider the sequence $(\rho_{m} )_{\mnn}$ verifying \eqref{eq:scheme:ro} for $\deltat=\deltat_{m}$ and $\mathcal{D}=\mathcal{D}_m$.
\noindent Thanks to the bound \eqref{romin}, there exists a subsequence of
$(\rho_{m} )_{\mnn}$, denoted once again $(\rho_{m} )_{\mnn}$,
which converges $\star$-weakly to some function $\bar \rho$ in $L^{\infty} ((0, T )\times\Omega)$, i.e.:
\begin{equation}
 \lim_{m\rightarrow\infty}\int_{0}^{T}\int_{\Omega}\rho_{m}(t,\bfx)\varphi(t,\bfx)\dx\dt=\int_{0}^{T}\int_{\Omega}
 \bar \rho(t,\bfx)\varphi(t,\bfx)\dx\dt,\quad \forall\varphi\in L^{1}( (0,T) \times \Omega )
 \label{convergro}
 \end{equation}
Let $B$ be a borelian set of $(0, T )\times\Omega$ so that we can take
$\varphi(t,\bfx) = \characteristic_{B}(t,\bfx)$ in \eqref{convergro}. From the
bounds on $\rho_{m}(t,\bfx)$ given by \eqref{romin}, we have the non-negativity
of the integrals:
\begin{flalign}\nonumber
\begin{array}{l r l }
\ds \int_{0}^{T}\int_{\Omega}(\rho_m(t,\bfx)-\rho_{\min}) \characteristic_{B}(t,\bfx)\dx\dt \vspace{5pt}  \\ \vspace{2pt}
\quad \quad \quad \mbox{and} \quad \ds \int_{0}^{T}\int_{\Omega}(\rho_{\max}- \rho_m(t,\bfx)) \characteristic_{B}(t,\bfx)\dx\dt,  \quad \forall \mnn
\end{array}
\end{flalign}
\noindent Passing to the limit as $m \rightarrow \infty$ yields,
\begin{equation*}
 \int_{0}^{T}\int_{\Omega}(\bar \rho(t,\bfx)-\rho_{\min}) \characteristic_{B}(t,\bfx)\dx\dt  \ge 0\quad \mbox{ and } \quad
 \int_{0}^{T}\int_{\Omega}(\rho_{\max}-\bar \rho(t,\bfx)) \characteristic_{B}(t,\bfx)\dx\dt \ge 0
\end{equation*}
which is equivalent to $\rho_{\min}\leq\bar \rho(t,\bfx)
\leq\rho_{\max}$ a.e. in $ (0, T )\times \Omega$.\\
\paragraph{Step 2- Compactness of \texorpdfstring{$\bs{(\bfu^m)_\mnn}$ in $\bs{L^2(0,T;(L^2(\Omega))^d)}$}{dt}}
Let us apply the time compactness Theorem \ref{theo-compact-space} for $p = 2$,
with the Banach space $B = (L^{2}(\Omega))^{d}$ and the sequence of Banach spaces
$(X_m)_\mnn $ where $ X_m = \Hmeshzerom$, $m \in \xN $ is endowed with the norm
defined in \eqref{norm-rv}. By Lemma \ref{compac:astuce} and Theorem \ref{compac:space}
we have that any bounded sequence in $\Hmeshzeromi$ is relatively compact in
$L^{2}(\Omega)$ for all $i=1,..,d$. Hence $\Hmeshzeromi$ endowed with the $i$--th
component norm \eqref{norm-rv} is compactly embedded in $L^2(\Omega)$ in the
sense of Definition \ref{compact-embedded-sequence}. Finally  $(X_m)_\mnn $ is
compactly embedded in $(L^{2}(\Omega))^{d}$.\\

 Let us consider the sequence $(\bfu_m)_\mnn$ verifying \eqref{eq:scheme:ro} for
 $\deltat=\deltat_{m}$ and $\mathcal{D}=\mathcal{D}_m$. Thanks to the first bound
 from Lemma \ref{lem:est-vit-ro} we have that $(\|\bfu_m\|_{L^{1}(0,T;\Hmeshzerom)})_\mnn$ is bounded.
 Furthermore, using the discrete Poincaré inequality on the first bound yields
 that $(\bfu_m)_\mnn$ is bounded in $L^{2}(0,T;(L^{2}(\Omega)^{d}))$. Let us show
 that the remaining condition is satisfied: that is, demonstrating the existence
 of $\zeta$ with $\zeta(\tau)\rightarrow 0$ as $\tau \rightarrow 0$ for all $\mnn$
 such that,

 \begin{equation}\nonumber
    \int_0^{T-\tau} \| \bfu_m(t+\tau,\cdot) -\bfu_m(t,\cdot) \|^2_{L^2} \dt \le \zeta(\tau) \qquad \forall \tau \in (0,T) \quad \forall \mnn
 \end{equation}
First, we use the following upper bound for the expression in \eqref{translates}: $C\sqrt{\tau + \deltat} \leq C(\tau^{\frac{1}{2}} + \deltat^{\frac{1}{2}})$,
with $C$ independent of $m$ thanks to the assumption $\eta_{\mesh_m} \le
\eta $.\\Let $\varepsilon >0$.  Then we may find $\tau_1$ and $M \in \xN$ so
that we have $\tau_1^{\frac{1}{2}} \leq \varepsilon$ and $\deltat_m^{\frac{1}{2}} \leq\varepsilon$ for all $ m \geq M$. Thus, by Lemma \ref{lem:translates}, we have for all $m \geq M$,
\begin{equation}\nonumber
   \int_0^{T-\tau} \| \bfu_m(t+\tau,\cdot) -\bfu_m(t,\cdot) \|^2_{\bs L^2} \dt \le C_1\varepsilon \qquad \forall \tau \in (0,\tau_1)
\end{equation}
For $m < M$, we introduce ${\varphi}^{m} \in C([0,T]\times\Omega)$ verifying $\|\bfu_m - \bs\varphi^{m}\|_{L^2((0,T),(L^{2}(\Omega))^{d})} < \varepsilon$. Therefore
%
%
\begin{flalign}\nonumber
\begin{array}{l r l }
  & \ds \scaleobj{.9}{\int_0^{T-\tau}} \| \bfu_m(t+\tau,\cdot) -\bfu_m(t,\cdot) \|^2_{\bs L^2} \dt \le   3\Big[
    & \ds \scaleobj{.9}{\int_0^{T-\tau}}\| \bfu_m(t+\tau,\cdot) - \bs\varphi^{m}(t+\tau,\cdot) \|^2_{\bs L^2} \dt \vspace{2pt}  \\ \vspace{2pt}
    & + & \ds \scaleobj{.9}{\int_0^{T-\tau}} \| \bs\varphi^{m}(t+\tau,\cdot)  - \bs\varphi^{m}(t,\cdot)  \|^2_{\bs L^2} \dt \\
    & + & \ds \scaleobj{.9}{\int_0^{T-\tau}} \| \bs\varphi^{m}(t,\cdot)  - \bfu_m(t,\cdot)  \|^2_{\bs L^2} \dt \Big]
\end{array}
\end{flalign} which yields\\
\begin{equation}\nonumber
  \ds \int_0^{T-\tau} \| \bfu_m(t+\tau,\cdot) -\bfu_m(t,\cdot) \|^2_{L^2} \dt \le 6 \varepsilon
  + \ds  3 \int_0^{T-\tau} \| \bs\varphi^{m}(t+\tau,\cdot)  - \bs\varphi^{m}(t,\cdot)  \|^2_{L^2}
\end{equation}\\
with $ \int_0^{T-\tau} \| \bs\varphi^{m}(t+\tau,\cdot)  - \bs\varphi^{m}(t,\cdot)  \|^2_{L^2} \rightarrow 0$ as $\tau \rightarrow 0$.
Thus, we  may find $\tau^{m}$ such that $\int_0^{T-\tau} \| \bfu_m(t+\tau,\cdot) -\bfu_m(t,\cdot) \|^2_{L^2} <  9 \varepsilon$ for $\tau \in (0,\tau^m)$.\\\\
Let us define $\overline{\tau} = \min(\tau_1, \min_{m\leq M} \tau^m)$ and $C = \max(C_1, 9)$. It follows that $\forall \tau \in (0,\overline{\tau})$,
\begin{equation}\nonumber
   \ds \int_0^{T-\tau} \| \bfu_m(t+\tau,\cdot) -\bfu_m(t,\cdot) \|^2_{L^2} \dt \le C\varepsilon \quad \mbox{ for all  } \quad \mnn
\end{equation}\\
Hence, the third condition of Theorem \ref{theo-compact-space} is satisfied and
we can conclude with the existence of $\bar \bfu\in L^{2}(0,T;L^{2}(\Omega)^{d})$
such that, up to a subsequence,
\[
\bfu_m\rightarrow \bar \bfu \text{ in }  L^{2}\left(0,T;L^{2}(\Omega)^{d}\right)\mbox{ as } m \to +\infty.
\]
%
%
\paragraph{Step 3- Passing to the limit as \texorpdfstring{$\bs{\deltat,h_\mesh\rightarrow 0}$}{dt} in the mass balance equation\\ }
For this step, we prove a weak Lax-Wendroff consistency for the discrete mass
balance equation : by passing to the limit as  $\deltat,h_\mesh\rightarrow 0$ in
\eqref{mass:weak}, we aim to show that the limit $(\bar \rho,\bar \bfu)$
obtained in the previous steps satisfies the weak mass balance equation
\eqref{massweak}.\\\\
Let  $\psi \in C_c^\infty( [0,T) \times \Omega )$ so that
we take $\psi_{m}^{n}= \Pi_{\mesh_m}  \psi(t_n,\cdot) \in L_{\mesh}$ as a test
function in \eqref{mass:weak}. Multiplying by $\deltat_m$ and summing over
$n= \{0, \ldots, N_m-1\}$ (with $N_m \deltat_m = T$) yields:
\[
 \sum_{n=0}^{N_m-1} \deltat_m \int_{\Omega} \eth_{t} \rho_{\mesh_m}^{n+1} \ \psi_m^{n} \dx
 + \sum_{n=0}^{N_m-1} \deltat_m \int_{\Omega}\dive_{\mesh_m}(\rho \bfu)_m^{n+1}\ \psi_m^{n}\dx
 =T_{1,m}+T_{2,m} = 0
\]
by defining the first and second terms as $T_{1,m}$ and $T_{2,m}$ respectively.
Let us use the definition of each operator and drop the subscript $m$ to reduce
the clutter of notations. It comes down to studying the convergence of each of
the following terms:
\begin{flalign}\nonumber
\begin{array}{l l }
 T_{1,m}= & \ds \sum_{n=0}^{N-1}\sum_{K\in\mesh}\vert K\vert (\rho_{K}^{n+1}-\rho_{K}^{n})
\psi_{K}^{n} \vspace{10pt} \\
T_{2,m}= & \ds \sum_{n=0}^{N-1}\deltat\sum_{K\in\mesh}\psi_{K}^{n}\sum_{\edge\in\edges(K)}
F_{K,\edge}^{n+1}
\end{array}
\end{flalign}\\
We deal with $T_{1,m}$ by rearranging the sum so as to perform a discrete
integration by parts for the time variable:
\[
 T_{1,m}=-\sum_{n=0}^{N-1}\sum_{K\in\mesh}\vert K\vert \rho_{K}^{n+1}(\psi_{K}^{n+1}-
 \psi_{K}^{n})-\sum_{K\in\mesh}\vert K\vert \rho_{K}^{(0)} \psi_{K}^{(0)}
\]
since $\psi_{K}^{N}=0$ thanks to the compact support of $\psi$. Let us note
$\psi_m = \sum_{n=0}^{N-1} \psi^{n+1}_m \characteristic_{]t^n,t^{n+1}]}$. This
is equivalent to,
\[
 T_{1,m}=-\int_{0}^{T}\int_{\Omega} \rho_{m}(t,\bfx) \eth_{t} \psi_{m}(t,\bfx)\dx\dt-\int_{\Omega}
 \rho_{m}^{(0)}(\bfx) \psi_m(0,\bfx)\dx,
\]
The weak-$\star$ convergence of $(\rho_{m} )_{\mnn}$ in $L^\infty((0,T)\times\Omega)$
and the strong convergence of $(\eth_{t} \psi_{m})_\mnn$ towards $\partial_{t} \psi$
in $L^1((0,T)\times\Omega)$ yields the convergence of the first term in $T_{1,m}$.
The second term is dealt with in the same way : from the initialization of the
scheme and the assumed regularity of the initial data  $\rho_{ 0} \in L^{\infty} (\Omega)$
and the strong convergence of $(\psi_m(0,\cdot))_\mnn$ in $L^1(\Omega)$. Hence we
have,

\[
\lim_{m\rightarrow\infty} T_{1,m}=-\int_{0}^{T}\int_{\Omega} \bar \rho(t,\bfx) \partial_{t} \psi(t,\bfx)
\dx\dt-\int_{\Omega}
\rho_{0}(\bfx) \psi(0,\bfx)\dx.
\]
Let us now focus on $T_{2,m}$. Using the expression of the mass flux $F_{K,\edge}$
and the boundary conditions on the velocity, we reorder the sum in $T_{2,m}$ so as
to perform a discrete integration by parts for the space variable:
\[
T_{2,m}=-\sum_{i=1}^{d}\sum_{n=0}^{N-1}\deltat \sum_{\substack{\edge \in \edgesint^{(i)} \\ \edge=K|L}}\vert D_{\edge}\vert
\rho_{\edge}^{n+1} u_{K,\edge}^{n+1}
 \frac{\vert \edge\vert}{\vert D_{\edge}\vert} (\psi_{L}^{n}-\psi_{K}^{n}),
\]
The terms $\rho^{n+1}_\edge$ being computed using the upwind scheme, we decompose
$T_{2,m}$ into two sums : $T_{2,m}=\mathcal{T}_{2,m}+\mathcal{R}_{2,m}$ with

\begin{flalign}\nonumber
\begin{array}{r l }
  \mathcal{T}_{2,m} = & - \ds \sum_{i=1}^{d}\sum_{n=0}^{N-1}\deltat \sum_{\substack{\edge \in \edgesint^{(i)} \\ \edge=K|L}}
  \vert D_{\edge}\vert \rho^{n+1}_{D_\edge} u_\edge^{n+1}\frac{\vert \edge\vert}{\vert D_{\edge}\vert}
   (\psi_{L}^{n}-\psi_{K}^{n})(\bfn_{K,\edge}\cdot \bfe_i) \vspace{10pt} \\
\mathcal{R}_{2,m} = &- \ds \sum_{i=1}^{d}\sum_{n=0}^{N-1}\deltat \sum_{\substack{\edge \in \edgesint^{(i)} \\ \edge=K|L}}
\vert D_{\edge}\vert (\rho_{\edge}^{n+1}-  \rho^{n+1}_{D_\edge} ) u_{K,\edge}^{n+1}
\frac{\vert \edge\vert}{\vert D_{\edge}\vert} (\psi_{L}^{n}-\psi_{K}^{n})
\end{array}
\end{flalign}\\

by recalling that $\vert D_{\edge}\vert \rho^{n+1}_{D_\edge}= \vert D_{K,\edge}\vert \rho_{K}^{n+1}+
\vert D_{L,\edge}\vert \rho_{L}^{n+1}$. In this form however, the term
$\mathcal{T}_{2,m}$ reveals the convex combinations of the density
$\mathcal R_{\mesh}^{\edgesi}\rho_m$ for $i=1,..,d$ and we have,

\[
\mathcal{T}_{2,m} =  - \ds \sum_{i=1}^d\ds \int_{0}^{T}\int_{\Omega}
\mathcal R_{\mesh}^{\edgesi}(\rho_m(t,\bfx))u_{i,m}(t,\bfx)\eth_i \psi_m(t,\bfx)\dx\dt.
\]
Since the sequence $(\rho_m)_m$ is only $\star$--weakly convergent, we can
neither conclude about the convergence of $(R_{\mesh}^{\edgesi}\rho_m)_m$ yet
nor pass to the limit in $\mathcal{T}_{2,m}$. Hence, we must reformulate the sum
 so as to iterate over elements of $\mesh$. For every $i=1,..,d$, let us note
 $K = [\edge \edge']$ for $(\edge,\edge ')\in (\edges(K)\cap\edgesi)^2$, with
 $\edge,\edge'$ thus being the two faces of the cell $K$ normal to $\bfe_i$.
 Notice that we may include the elements of $\edgesexti$ owing to the homogeneous
 boundary conditions for the velocity. Therefore, we have,
\begin{flalign}\nonumber
\begin{array}{r l }
  T_{2,m} & =- \ds \sum_{i=1}^{d}\sum_{n=0}^{N-1}\deltat \sum_{\substack{\edge \in \edgesint^{(i)} \\ \edge=K|L}}
  \vert D_{\edge}\vert \rho_{D_\edge} u_\edge^{n+1}(\eth \psi^n)_\edge \vspace{5pt}\\
  & = - \ds \sum_{i=1}^{d}\sum_{n=0}^{N-1}\deltat \sum_{\substack{K \in \mesh \\ K = [\edge \edge']}}
  \Big(\vert D_{K,\edge}\vert \rho_{K} u_\edge^{n+1}(\eth \psi^n)_\edge
  + \vert D_{K,\edge'}\vert \rho_{K} u_{\edge'}^{n+1}(\eth \psi^n)_{\edge'}\Big)\vspace{5pt} \\
  & = - \ds \sum_{i=1}^{d}\sum_{n=0}^{N-1}\deltat \sum_{\substack{K \in \mesh \\ K = [\edge \edge']}}
  | K | \rho_{K} \Big( \frac{\vert D_{K,\edge}\vert}{|K|} u_\edge^{n+1}(\eth \psi^n)_\edge
  +  \frac{\vert D_{K,\edge'}\vert}{|K|} u_{\edge'}^{n+1}(\eth \psi^n)_{\edge'}\Big) \vspace{5pt} \\
  & = - \ds \sum_{i=1}^{d}\sum_{n=0}^{N-1}\deltat \sum_{\substack{K \in \mesh \\ K = [\edge \edge']}}
  | K | \rho_{K} \frac{1}{2}\Big(  (u_\edge^{n+1}(\eth \psi^n)_\edge + u_{\edge'}^{n+1}(\eth \psi^n)_{\edge'}\Big) \vspace{5pt} \\
  & = - \ds \sum_{i=1}^{d}\sum_{n=0}^{N-1}\deltat \sum_{\substack{K \in \mesh \\ K = [\edge \edge']}}
  | K | \rho_{K} \mathcal  R^{\mesh}_{\edgesi}(u^{n+1}_i \eth_i \psi^n)_\edge.
\end{array}
\end{flalign}\\
Finally we have,
\begin{flalign}\nonumber
\begin{array}{r l }
   T_{2,m} & = - \ds \sum_{i=1}^d\ds \int_{0}^{T}\int_{\Omega} \rho_m(t,\bfx)\mathcal R^{\mesh}_{\edgesi} (u_{i,m}(t,\bfx)\eth_i \psi_m(t,\bfx))\dx\dt
\end{array}
\end{flalign}
The strong convergence of $(u_{i,m})_\mnn$ and $(\eth_i \psi_m)_\mnn$ in $L^2((0,T)\times\Omega)$
yields the strong convergence of $(u_{i,m} \eth_i \psi_m)_\mnn$ in $L^2((0,T)\times\Omega)$.
By Lemma \ref{lem-conv-recons}, we have:\\
\[
R^{\mesh}_{\edgesi} \big(u_{i,m}\eth_i \psi_m\big) \ds \longrightarrow
\bar \bfu\eth_i \psi  \quad \mbox{in} \quad  L^2((0,T)\times\Omega) \quad \mbox{as} \quad m \rightarrow \infty
\]~\\
Hence, combined with the weak convergence of $\rho_m \rightharpoonup \bar \rho \mbox{ in } L^2((0,T)\times\Omega)$
we may pass to the limit in $\mathcal{T}_{2,m}$ to get,
\[
  \lim_{m\rightarrow \infty} \mathcal{T}_{2,m}=-\int_{0}^{T}\int_{\Omega}\bar \rho(t,\bfx)\bar \bfu(t,\bfx)\cdot \nabla
  \psi(t,\bfx)\dx\dt
\]
\bop
We are left to show that the remainder $\mathcal{R}_{2,m}$ tends to $0$ as
$m\rightarrow\infty$. Owing to the upwind choice \eqref{eq:divflux} for $\rho_\edge$
we have: \\
\begin{equation}\nonumber
  \vert D_{\edge}\vert (\rho_{\edge}^{n+1}-  \rho^{n+1}_{D_\edge} ) \leq  \max{(\vert D_{K\edge}\vert, \vert D_{L,\edge}\vert)} | \rho_K^{n+1}-  \rho_L^{n+1} |
\end{equation}\\
Therefore, $\mathcal{R}_{2,m}$ may be bounded with,
\[
\vert\mathcal{R}_{2,m} \vert \le
  \sum_{n=0}^{N-1}\deltat \sum_{i=1}^{d}\sum_{\substack{\edge \in \edgesint^{(i)} \\ \edge=K|L}} |\edge|
  \vert\rho_{L}^{n+1}-\rho_{K}^{n+1}\vert \vert u_{\edge}^{n+1} \vert  \vert\psi_{L}^{n}-\psi_{K}^{n}\vert
  \]
We then use the Cauchy-Schwarz inequality to highlight the left handside of the
weak $BV$ estimate \eqref{eq:bvweak},
\begin{flalign}\nonumber
\begin{array}{l l }
  \vert \mathcal{R}_{2,m} \vert
    & \leq  \ \bigl(\ds \sum_{n=0}^{N-1}\deltat \ds \sum_{i=1}^{d}\sum_{\substack{\edge \in \edgesint^{(i)} \\ \edge=K|L}}
  |\edge|\vert\rho_{L}^{n+1}-\rho_{K}^{n+1}\vert^2 \vert u_{\edge}^{n+1} \vert \bigr)^{\frac 1 2}
  \bigl(\ds \sum_{n=0}^{N-1}\deltat \ds \sum_{i=1}^{d}\sum_{\substack{\edge \in \edgesint^{(i)} \\ \edge=K|L}} |\edge|
    \vert\psi_{L}^{n}-\psi_{K}^{n}\vert^2 \vert u_{\edge}^{n+1} \vert\bigr)^{\frac 1 2} \\
    & \leq  \sqrt{C} \bigl(\ds \sum_{n=0}^{N-1}\deltat \ds \sum_{i=1}^{d}\sum_{\substack{\edge \in \edgesint^{(i)} \\ \edge=K|L}}
    |\edge| \vert\psi_{L}^{n}-\psi_{K}^{n}\vert^2 \vert u_{\edge}^{n+1} \vert\bigr)^{\frac 1 2}
\end{array}
\end{flalign}\\
By Lemma \ref{lem-consistance}, there exists $C_\psi$ depending only $\partial_i \psi$,
$\Omega$ and $T$ such that $\vert\psi_{L}^{n}-\psi_{K}^{n}\vert \leq C_\psi d(\bfx_K,\bfx_L)$.
As $|D_\edge| = |[\bfx_K,\bfx_L] \times \edge|$ it yields that,
\begin{align*}
  \vert\mathcal{R}_{2,m} \vert
    &\le  \sqrt C C_\psi  \sqrt{h_m} \bigl( \sum_{n=0}^{N-1}\deltat\sum_{i=1}^{d}\sum_{\substack{\edge \in
    \edgesint^{(i)} \\ \edge=K|L}} |D_\edge|\vert u_{\edge}^{n+1} \vert\bigr)^{\frac 1 2} \\
    &\le C(\psi,T,\Omega,\Vert \bfu_m \Vert_{L^2(L^2)}) \sqrt{h_m}.
\end{align*}
Thanks to the bound \eqref{estiLdeux:ro} on $\bfu_m$ in $L^2( (0,T) \times \Omega )$,
we have $\vert\mathcal{R}_{2,m} \vert \longrightarrow 0$ as $h_m \to 0$ which
allows us to conclude that $(\bar \rho, \bar \bfu)$ satisfies the weak mass
balance equation.

\paragraph{Step 4- Regularity of the limit $ \boldsymbol{\bar{u}}$ }
In order to demonstrate the strong convergence of $(\rho_m)_m$ in the next section,
we require $\boldsymbol{\bar{u}}$ to be divergence free. First, let us show that
$\boldsymbol{\bar{u}}\in L^2((0,T),\bfH^1_0(\Omega))$. A characterization of
$\boldsymbol{\bar{u}}\in L^2((0,T),\bfH^1(\Omega))$ is given by the existence of
a constant $C$ such that,
\begin{equation}\label{eq:h10}
  \left| \int_0^T\int_\Omega \bar u_i \partial_j \varphi \dx \right| \leq
  C \|\varphi\|_{L^2((0,T)\times\Omega)} \quad \forall \varphi \in C^\infty_c((0,T)\times\Omega)\quad \forall i,j=1,..,d
\end{equation}

We adapt the proof of Theorem \ref{compac:space} from \cite{FV-book} to our case.
Let $\varphi \in C^\infty_c((0,T)\times\Omega)$ and $(i,j)\in \llbracket1,d\rrbracket$.
From now on we drop the subscript $i$ in the velocity notation. Let us define the
continuation of $(u_m)_\mnn$ on $(\R \times \R^d)\backslash((0,T)\times\Omega)$,
denoted $(\mathbb P u_m)_\mnn$, by
\[
\mathbb P {u}_m= u_m \quad \mbox{a.e. on} \quad (0,T)\times\Omega \quad \mbox{and}
\quad \mathbb P {u}_m= 0 \quad \mbox{a.e. on} \quad (\R \times \R^d)\backslash((0,T)\times\Omega).
\]
Then $(\mathbb P {u}_m)_\mnn$ converges strongly to $\mathbb P \bar u$ in $L^2(\R\times\R^d)$ with $\mathbb P \bar u$ being the continuation of $\bar u$ on $(\R \times \R^d)\backslash((0,T)\times\Omega)$.\\\\
Let $\eta \in \R^d, \eta \neq 0$ and $\mnn$. Using Cauchy-Schwarz, we have:
\[
\int_\R\int_{\R^d}  \frac{\mathbb P u_m(t,\bfx+\eta) - \mathbb P u_m(t,\bfx)}{|\eta|}\varphi \dx \dt
\leq \frac{1}{|\eta|} \left[ \int_\R \|\mathbb P u_m(t,\cdot+\eta) -\mathbb P u_m(t,\cdot) \|_{L^2}^2 \dt \right]^{\frac{1}{2}} \|\varphi\|_{L^2}
\]~\\
Again, using the space compactness estimate from Lemma \ref{compac:astuce} and
the bound on the $\Hmeshizero$ norms of $(u_m)_\mnn$ \eqref{estiLdeux:ro} yields
the existence of $C$ depending only on initial data $\rho_0, \bfu_0$ and $\bff$
such that,
\[
\int_\R\int_{\R^d}  \frac{\mathbb P u_m(t,\bfx+\eta) - \mathbb P u_m(t,\bfx)}{|\eta|} \varphi \dx \dt
\leq \frac{1}{|\eta|} \left( C |\eta|(|\eta| + C h_{\mesh_m}) \right)^{\frac{1}{2}} \|\varphi\|_{L^2(L^2)}
\]
Passing to the limit as $m \rightarrow \infty$ yields
\[
\int_\R\int_{\R^d}  \frac{\mathbb P \bar u(t,\bfx+\eta) -\mathbb P \bar u(t,\bfx)}{|\eta|} \varphi(t,\bfx) \dx \dt
\leq \sqrt{C} \|\varphi\|_{L^2(\R\times\R^d)}
\]
We perform a change of variable to obtain,
\[
\int_\R\int_{\R^d}  \frac{\varphi(t,\bfx) -\varphi(t,\bfx-\eta)}{|\eta|} \mathbb P \bar u(t,\bfx) \dx \dt
\leq \sqrt{C} \|\varphi\|_{L^2(\R\times\R^d)}
\]
We may retrieve an estimate with $\partial_j \varphi$ by setting $\eta = h \bfe_j$
with $h>0$. Using a Taylor's inequality and passing to the limit as $h \rightarrow 0$
yields \eqref{eq:h10} on $\R \times \R^d$. Hence $P \bar u \in L^2(\R, H^1(\R^d))$.
Finally, since $\bar u = P \bar u$ a.e. on $(0,T)\times\Omega$ and $P \bar u = 0$
outside $(0,T)\times\Omega$, we conclude that $\bar u \in L^2((0,T), H_0^1(\Omega))$.\\\\
Let us show that $\bar \bfu$ is divergence free. For any $\varphi \in C^\infty_c((0,T)\times\Omega)$
we define the sequence $(\varphi_m)_\mnn$ as,
\[
\varphi_{m}  = \sum_{n=0}^{N-1} \varphi^{n+1}_{m} \characteristic_{(t^n,t^{n+1}]} \quad
\mbox{with} \quad \varphi^n_{_m} = \Pi_{\mesh_m} \varphi(t^n,\cdot) \quad \forall n\in\{1,N\}
\]
The sequence $(u_m)_\mnn$ being discrete divergence free a.e. in $(0,T)\times\Omega$
 combined with the discrete integration by parts \ref{eq:duality} yields,
\begin{flalign}\nonumber
\begin{array}{r l }
  0 = &  \ds \int_0^T \int_\Omega  \varphi_m  \dive_{\mesh_m} \bfu_m \dx \dt \vspace{10pt}\\
  = &  - \ds \int_0^T \int_\Omega  \nabla_{\edges_m}\varphi_m  \cdot \bfu_m \dx \dt
\end{array}
\end{flalign}\\
Using the strong convergences of $\nabla_{\edges_m}\varphi_m$ (by Lemma \ref{lem-consistance})
and $(\bfu_m)_m$ towards $\nabla \varphi$ and $\bar \bfu$ respectively in $L^2((0,T),L^2(\Omega)^d)$,
we conclude with:
\begin{flalign}\nonumber
\begin{array}{r l }
  0 =& \ds \lim_{m\rightarrow \infty} - \ds \int_0^T \int_\Omega  \nabla_{\edges_m}\varphi_m  \cdot \bfu_m \dx \dt
  =  - \ds \int_0^T \int_\Omega  \nabla\varphi \cdot \bar \bfu \dx \dt = \ds \int_0^T \int_\Omega  \varphi  \dive \bar \bfu \dx \dt
\end{array}
\end{flalign}\\
Finally, the limit $\bar \bfu$ belongs to $L^2((0,T),\bfH^1_0(\Omega))$ and
$\dive \bar \bfu = 0$ a.e. on $(0,T)\times\Omega$.

\paragraph{Step 5- Weak convergence of the discrete derivatives in $\boldsymbol{L^{2}(\Omega)}$}
First, owing to the discrete "$L^2(\bfH^1_0)$" bound $\eqref{estiLdeux:ro}$ we
have that for all $i,j = 1,..,d$ the sequence of derivatives $(\eth_j u_{i,m})_\mnn$
is bounded in ${L^{2}((0,T) \times \Omega)}$, thus is weakly convergent to some
limit in $L^{2}((0,T) \times \Omega)$.
For $i\in\{1,..,d\}$ and $n\in \{1,..,N\}$ -- although we drop the superscript
$n$ -- we show that $\eth_i u_{i,m} \rightharpoonup \partial_i \bar u_{i}$ as
$m\rightarrow \infty$. Let $\varphi \in C^\infty_c(\Omega)$ so that we may take
$\varphi_m = \Pi_\mesh \varphi$. Then,
\[
\int_\Omega  \eth_i u_{i,m} \varphi \dx = \int_\Omega  \eth_i u_{i,m} \varphi_m \dx
+ \int_\Omega  \eth_i u_{i,m} (\varphi - \varphi_m) \dx
\]
Thanks to the regularity of $\varphi$ and the bound on $\eth_i u_{i,m}$, the
second term vanishes as $m\rightarrow \infty$ and we may work on the first term
only. An integration by part $\eqref{eq:duality}$ yields,
\[
\int_\Omega  \eth_i u_{i,m} \varphi_m \dx  = - \int_\Omega u_{i,m} \eth_i \varphi_m \dx
\]
Using the strong convergence of $u_{i,m}$ with the consistency of the derivatives
(Lemma \ref{lem-consistance}) we pass to the limit to get,
\[
\lim_{m\rightarrow \infty}\int_\Omega  \eth_i u_{i,m} \varphi \dx  =
- \int_\Omega \bar u_{i} \partial_i \varphi \dx
=  \int_\Omega \partial_i \bar u_{i}  \varphi \dx
\]
and we can conclude by density. The result with $\eth_j u_{i,m}$ follows, by
considering the reconstructions $\mathcal  R^{\mesh}_{\edgesi}u_{i,m}$ and
$\mathcal  R_{\mesh}^{\edgesj}\varphi_m$ and their convergence properties.

\paragraph{Step 6- Strong convergence of \texorpdfstring{$\bs{(\rho_m)_\mnn}$ in $\bs{L^2( (0,T) \times \Omega )}$}{dt} }
We follow the proof in \cite{latche-saleh-17} to obtain the strong convergence
of $(\rho_m)_m$ in ${L^2( (0,T) \times \Omega )}$. Obviously, we cannot resort
to the previous space compactness results from the lack of estimates on the space
translates of the discrete density. Instead, using results from \cite{DiPL} and
\cite{BoyerFabrie-book} and recalled in Annex \ref{annex:transport}, we turn to
the regularity of $\bar \rho$ to conclude with the result. \\\\ The boundness of
 $(\rho_{m} )_{\mnn}$ in $L^{\infty}((0,T) \times \Omega)$ yields that $(\rho_{m} )_{\mnn}$
 converges weakly towards $\bar \rho$ in $L^2( (0,T) \times \Omega )$. Hence,
 thanks to the convexity and the continuity of the $L^2$ norm for the weak topology,
 we use the following classical result \cite[Corollary II.2.8]{BoyerFabrie-book}:
\[
 \|\bar\rho\|_{L^{2} ( (0,T) \times \Omega )}\leq\ds \liminf_{ m \to \infty} \|\rho_{m}\|_{ L^{ 2} ( (0,T) \times \Omega )}.
\]
Let us show that $\limsup_{ m \to \infty} \|\rho_{m}\|_{L^2} \leq \|\bar\rho\|_{L^{2}}$
which in turn, will yield the strong convergence of $(\rho_{m} )_{\mnn}$ in $L^2( (0,T) \times \Omega )$.
\cite[Proposition II.2.11]{BoyerFabrie-book}.\\\\
By \eqref{eq:rhobv}, we have for all $n$ in $\{0,\cdots, N-1 \}$:
\begin{equation*}
 \sum_{K\in\mesh_m}\vert K\vert (\rho_{K}^{n+1}  )^{2}\leq\sum_{K\in\mesh_m}\vert K\vert (\rho_{K}^{(0)})^{2}\leq
 \|\rho_{0}\|_{L^{2}(\Omega)}^{2},
\end{equation*}
which yields $\|\rho_m(., t)\|_{L^{2}(\Omega)}^{2}\le \|\rho_0(., t)\|_{L^{2}(\Omega)}^{2}$
for all $t\in(0,T)$ and $\mnn$.\\\\
Moreover, since $\bar \rho$ is a solution to the weak transport equation \eqref{massweak},
with $\bar\bfu \in L^2([0,T],\bfH^1_0(\Omega))$ being divergence free, we apply
Theorem \ref{unicite-transport} to conclude with the unicity and time continuity
of the limit $\bar \rho$ and we have $\bar \rho \in C^{0}(0,T;L^{2}(\Omega))$.\\
Taking $\beta(\zeta) = \zeta^2$ in Theorem \ref{renormalisation} along with the
test function $\varphi = 1$, we can establish that for any
$t \in [0,T)$ then $\|\bar\rho(.,t)\|_{L^{2}(\Omega)}= \|\rho_{0}\|_{L^{2}(\Omega)}$.\\\\
Therefore, we have
$\|\rho_{m}(.,t)\|_{L^{2}(\Omega)}^{2}\leq\|\bar \rho(.,t)\|_{L^{2}(\Omega)}^{2}$
for all $t\in[0,T)$ and $\mnn$. Integrating this last inequality for $t \in [0, T )$,
we obtain $\|\rho_{m}\|_{L^{2}( (0,T) \times \Omega )}^{2}\leq\|\bar\rho\|_{L^{2}( (0,T) \times \Omega )}^{2}$
for all $\mnn$, and passing to the limit as $m$ goes to infinity yields:
\begin{equation*}
 \limsup_{n\rightarrow\infty} \|\rho_{m}\|_{L^{2}( (0,T) \times \Omega )}\leq\|\bar\rho\|_{L^{2}( (0,T) \times \Omega )}.
\end{equation*}
This proves that $\lim_{m\rightarrow\infty}\|\rho_{m}\|_{L^{2}( (0,T) \times \Omega )}=
\|\bar\rho\|_{L^{2}( (0,T) \times \Omega )}$. Combined with the weak convergence
of $(\rho_{m})_\mnn$ towards $\bar \rho$ in $L^{2}((0,T) \times \Omega)$ we conclude
that,
\[\rho_{m} \longrightarrow \bar \rho \quad \mbox{as}\quad  m \to \infty \quad \mbox{in}\quad L^{2}((0,T) \times \Omega)\]

\paragraph{Step 7- Strong convergence of the viscosity tensor in $\boldsymbol{L^{2}((0,T) \times \Omega)}$}
Thanks to the continuity of $\mu : \R \rightarrow \R_+ $, the strong convergence
 of $(\mu_m)_\mnn$ is rather straightforward. Indeed, owing to the strong convergence
 of $(\rho_{m})_\mnn$ to $\bar \rho$  in $L^{2}((0,T) \times \Omega)$ we may use
 the reciprocal of the dominated convergence theorem. Hence there exists a subsequence
 still denoted  $(\rho_{m})_\mnn$ such that,
\[\rho_{m}(t,\bfx) \rightarrow \bar \rho(t,\bfx) \quad \mbox{a.e. in} \quad [0,T]\times\Omega\]
Therefore, $\mu_{m}(t,\bfx) = \mu(\rho_{m}(t,\bfx))$ converges pointwise to
$\bar \mu(t,\bfx) = \mu(\bar \rho(t,\bfx)) $ a.e. in  $[0,T]\times\Omega$. Using \eqref{mumin} --
the $L^\infty([0,T]\times\Omega)$ bound on $(\mu_m)_\mnn$ -- and the dominated convergence theorem yields the strong convergence
of $(\mu_m)_\mnn$ to $\bar \mu$ in $L^{2}((0,T) \times \Omega)$.\\\\
For any $(i,j) \in \llbracket1,d\rrbracket^2$ we show that the sequence of viscosities
associated to $\edgesd_m^{(i,j)}$ converges strongly to $\bar \mu$ in $L^{2}((0,T)  \times \Omega)$ as $m\rightarrow\infty$.
Nothing is to be done for $i=j$ as we have $\{D_\edged, \edged \in \edgesd_m^{(i,i)}\} = \{K \in \mesh_m\}$.
Let $i \neq j$ and $ n \in \{0,..,N\}$. Then $\mu^{n}_{{ij}_m}$
can be defined as a reconstruction of $\mu^n_m$. Using the same arguments as the
proof in Lemma \ref{lem-stab-recons} yields that $\|\mu_{{ij}_m}\|_{L^p((0,T)\times\Omega)} \leq \|\mu_m\|_{L^p((0,T)\times\Omega)}$.
Hence, by Lemma \ref{lem-conv-recons} we are able to conclude that
\[
\mu_{{ij}_m} \rightarrow \bar \mu \quad \mbox{as} \quad m
\rightarrow \infty \quad \mbox{in} \quad L^{2}((0,T) \times \Omega)   \quad \forall i,j = 1,..,d\]

\paragraph{Step 8- Passing to the limit as \texorpdfstring{$\bs{\deltat,h_\mesh\rightarrow 0}$}{dt} in the momentum balance equation }

Similarly to Step 3, we prove a weak Lax-Wendroff consistency for the
discrete momentum balance equation : by passing to the limit as
$\deltat,h_\mesh\rightarrow 0$ in \eqref{qdm:weak}, we aim to show
that the limit $(\bar \rho,\bar \bfu)$ obtained in the previous steps
satisfies the weak momentum equation \eqref{qdmweak}.\\\\
Let $\bfvarphi \in C_c^\infty( [0,T) \times \Omega )^{d}$, such that
$\dive \bfvarphi =0$, so that we take $ \bfvarphi_{m}^{n}=\widetilde{\Pi}_{\edges_m} \bfvarphi(\cdot,t_n)$.
The preservation of the divergence by the Fortin operator \eqref{fortin-prop3} yields
that $\dive_\mesh \bfvarphi_{m}^{n} = 0$.
Therefore $\bfvarphi_{m}^{n}$ belongs to $\boldsymbol{E}_{\edges}$ and may be
used as a test function in \eqref{qdm:weak}.\\\\
Multiplying by $\deltat_m$ and summing over $n= \{0, \ldots, N_m-1\}$ we get,

\begin{multline*}
\sum_{n=0}^{N_m-1} \deltat_m \left[ \int_{\Omega} \eth_{t} (\rho\bfu)^{n+1}_{m} \cdot \bfvarphi_m^{n} \dx
+ b_\edges((\rho\bfu)_{m}^{n+1},\bfu_{m}^{n+1},
\bfvarphi_m^{n}) \right.
\\ \left. +\int_\Omega \mu^{n+1}_\edgesd \bs{D_\edgesd}(\bfu_{m}^{n+1}) : \bs{D_\edgesd}(\bfvarphi_m^{n}) \dx -
\int_\Omega \bff_{\edges_m}^{n+1} \cdot \bfvarphi_m^{n} \dx\right] =0,
\end{multline*}~\\
which we decompose according to,
\begin{align*}
 &
 T_{1,i}^{(m)} =\sum_{n=0}^{N-1}\sum_{\edge\in\edgesi} \int_{\Omega} \eth_{t} (\rho u_i)^{n+1}_{m}
 \varphi_{m,i}^{n}  \dx, &
 &
\quad T_{2,i}^{(m)} = \sum_{n=0}^{N-1}\deltat \ b_\edges^{(i)}((\rho\bfu)_{m}^{n+1},u_{i,m}^{n+1},  \varphi_{i,m}^{n}),\\
 &
 T_{3}^{(m)}=\sum_{n=0}^{N-1}\deltat \int_\Omega \mu^{n+1}_\edgesd \bs{D_\edgesd}(\bfu_{m}^{n+1}) : \bs{D_\edgesd}(\bfvarphi_m^{n}) \dx,
  & &
\quad T_{4,i}^{(m)}=\sum_{n=0}^{N-1} \deltat\int_{\Omega} f_{i,m}^{n+1}\varphi_{i,m}^{n}\dx.
\end{align*}
to give $\sum_{i=1}^{d}\bigl[ T_{1,i}^{(m)}+T_{2,i}^{(m)}+T_{4,i}^{(m)}\bigr] + T_{3}^{(m)} =0$. \\\\
Let us treat the convergence of each term individually. We deal with $T_{1,i}^{(m)}$
by rearranging the sum so as to perform a discrete integration by parts for the
time variable
\begin{align*}
  T_{1,i}^{(m)} &=\sum_{n=0}^{N-1}\deltat \sum_{\edge\in\edgesi} \vert D_{\edge} \vert \;
  \frac{\rho_{D_{\edge}}^{n+1} u^{n+1}_{\edge}-\rho_{D_{\edge}}^{n} u^{n}_{\edge}} {\deltat}
  \; \varphi_{\edge}^{n}\\
  & =-\sum_{n=0}^{N-1} \deltat \sum_{\edge\in\edgesi} \vert D_{\edge} \vert \rho^{n+1}_{D_{\edge}}
  u_{\edge}^{n+1} \frac{(\varphi_{\edge}^{n+1} -\varphi_{\edge}^{n})}{\deltat}
  -\sum_{\edge\in\edgesi} \vert D_{\edge} \vert \rho^{(0)}_{D_{\edge}}  u_{\edge}^{(0)} \ \varphi_{\edge}^{(0)}.
\end{align*}
since $\varphi_\edge^{(T)}=0$ thanks to the compact support of $\varphi$.
We recall that
$\vert D_{\edge}\vert \rho^{n+1}_{D_\edge}= \vert D_{K,\edge}\vert \rho_{K}^{n+1}+ \vert D_{L,\edge}\vert \rho_{L}^{n+1}$:
as in Step 3 we are dealing with a reconstruction of $\rho_m$ and we have,
\begin{align*}
 T_{1,i}^{(m)}&  =-\int_{0}^{T} \int_{\Omega} \mathcal R_{\mesh}^{\edgesi}(\rho_m(t,\bfx)) u_{i,m}(t,\bfx) \eth_{t}
 \varphi_{i,m}(t,\bfx) \dx \dt \\
 & \quad  \quad  \quad  \quad  \quad  \quad  \quad -\int_{\Omega}\mathcal R_{\mesh}^{\edgesi}(\rho^{(0)}_{m}(\bfx)) u_{i,m}^{(0)}(\bfx)
\varphi_{i,m}^{(0)}(\bfx)\dx.
\end{align*}

Therefore passing to the limit in $T_{1,i}^{(m)}$ is straightforward :  Lemma \ref{lem-conv-recons} yields
that $(\mathcal R_{\mesh}^{\edgesi}\rho_m)_m$ converges strongly to $\bar \rho$ in $L^{2} ( (0,T) \times \Omega )$
as $m \to +\infty$ . Combined with the strong convergences of  $(u_{i,m})_\mnn$
and $\eth_t \varphi_{i,m}$ (by Lemma \ref{lem-consistance}) towards $\bar u_{i}$
 and $\partial_{t} \varphi_{i}$ respectively in $L^{2} ( (0,T) \times \Omega )$
 we can conclude for the limit in the first term.\\\\ From the initialization
 of the scheme and the assumed regularity of the initial data -- i.e. $\rho_{ 0} \in L^{\infty} (\Omega)$
 and $u_{0}\in L^{2} (\Omega)$ -- we get that $\mathcal R_{\mesh}^{\edgesi}(\rho_{m}^{(0)})$ and $u_{i,0}$ converge
to $\rho_{0}$  and  $u_{i,0}$ respectively in $L^{2} (\Omega)$. Finally,
Lemma \ref{lem-consistance} yields the convergence of $\varphi_{i,m}^{(0)}$ to
$\varphi_i(.,0)$ in $L^{2} (\Omega)$ and we have,

\[
\lim_{m\rightarrow\infty}\sum_{i=1}^{d} T_{1,i}^{(m)} =  - \int_{0}^{T}\int_{\Omega} \bar\rho(t,\bfx) \bar \bfu(t,\bfx) \
\cdot \partial_{t}  \bfvarphi(t,\bfx)\dx\dt -\int_{\Omega}\rho_{0}(\bfx) \bfu_{0}(\bfx) \cdot \bfvarphi(0,\bfx) \dx
\]

\noindent
Thanks to the reformulation of $b^{(i)}_\edges$ to \eqref{eq:new-b} we have for
the convection term $T_{2,i}^{(m)}$ ,

\begin{align*}
T_{2,i}^{(m)} & = \sum_{n=0}^{N-1}\deltat \ b_\edges^{(i)}((\rho\bfu)_{m}^{n+1},u_{i,m}^{n+1},  \varphi_{i,m}^{n}) \vspace{5pt}\\
& = - \ds \sum_{n=0}^{N-1}\deltat \ds \sum_{j=1}^d
\int_\Omega (\mathcal R_\edgesd^{(j,i)})^\rho \rho^{n+1}_{\edges^{(j)},m} (\mathcal R_\edgesd^{(j,i)})^u u_{j,m}^{n+1}
(\mathcal R_\edgesd^{(i,j)})^v u_{i,m}^{n+1} \eth_j \varphi_{i,m}^{n} \dx
 \end{align*}

Moreover, we recall that $\rho^{n+1}_{\edges^{(j)},m}$ is defined
using an upwind scheme (see Lemma \ref{lem-new-b} and \eqref{eq:divflux}).
Therefore it can be seen as a reconstruction of
$\rho^{n+1}_{m}$ in the sense of definition \ref{def:R-mesh-to-edgesi}
where we take $\alpha_\edge = 0$ or $1$ according to the upwind scheme
and a bound similar to that of Lemma \ref{lem-stab-reconsij}
holds. Then, given the strong convergence of $(\rho_m)_\mnn$ in
$L^2((0,T)\times \Omega)$ we have that $((\mathcal
R_\edgesd^{(j,i)})^\rho \rho_{\edges^{(j)},m})_\mnn \rightarrow \bar
\rho $ in $L^2((0,T)\times \Omega)$ as $m\rightarrow \infty$. Finally,
we decompose $T_{2,i}^{(m)}$ as,
\begin{align*}
  T_{2,i}^{(m)} & =  - \ds \sum_{n=0}^{N-1}\deltat \ds \sum_{j=1}^d
  \int_\Omega (\mathcal R_\edgesd^{(j,i)})^\rho \rho^{n+1}_{\edges^{(j)},m}
  (\mathcal R_\edgesd^{(j,i)})^u u_{j,m}^{n+1} (\mathcal R_\edgesd^{(i,j)})^v u_{i,m}^{n+1} \eth_j \varphi_{i,m}^{n+1} \dx \\
  & \quad \quad  \quad +  \ds \sum_{n=0}^{N-1}\deltat \ds \sum_{j=1}^d
  \int_\Omega (\mathcal R_\edgesd^{(j,i)})^\rho \rho^{n+1}_{\edges^{(j)},m}
  (\mathcal R_\edgesd^{(j,i)})^u u_{j,m}^{n+1} (\mathcal R_\edgesd^{(i,j)})^v u_{i,m}^{n+1} \eth_j (\varphi_{i,m}^{n+1} - \varphi_{i,m}^{n+1}) \dx
 \end{align*}

using the estimate on the trilinear form \eqref{estimb-rho} combined
with the bounds on the discrete solutions \eqref{romin},
\eqref{estiLdeux:ro} and the regularity of $\bfvarphi$, we conclude
that the second term tends to $0$ as $m\rightarrow \infty$.  From the
consistence properties of the reconstruction operators and the strong
convergences of $(\rho_m)_m$ and $(\bfu_m)_m$ in $L^2((0,T)\times \Omega)$.
Thus, up to a subsequence,
$(\mathcal R_\edgesd^{(j,i)})^\rho \rho^{n+1}_{\edges^{(j)},m} (\mathcal R_\edgesd^{(j,i)})^u u_{j,m}^{n+1} (\mathcal R_\edgesd^{(i,j)})^v
u_{i,m}^{n+1} \eth_j \varphi_{i,m}^{n+1}$ tends to $\bar \rho \bar u_j \bar u_i \partial_j \varphi_i $ in $L^1((0,T)\times \Omega)$ as $m\to
+\infty$, and we get,
\[
\lim_{m\to +\infty} T_{2,i}^{(m)} =  \int_0^T \int_\Omega \bar \rho \bar u_j  \bar u_i \partial_j \varphi_i \dx \dt,
\]

\smallskip

\noindent Thanks to the symmetry property \eqref{eq:symm-grad}, the diffusion term $T_{3}^{(m)}$ reads,
\[
T_{3}^{(m)}= \frac{1}{2}\sum_{n=0}^{N-1}\deltat \sum_{i,j=1}^d
\left[\quad  \int_\Omega \mu_{ij,m}^{n+1} \eth_j u^{n+1}_{i,m} \eth_j \varphi^n_{i,m}\dx
  + \int_\Omega \mu_{ij,m}^{n+1} \eth_j u^{n+1}_{i,m} \eth_i \varphi^n_{j,m}\dx \right]
\]
Let us pass to the limit in each term. First, we note that:
\[
\int_\Omega \mu_{ij,m}^{n+1} \eth_j u^{n+1}_{i,m} \eth_j \varphi^n_{i,m}\dx =
\int_\Omega \mu_{ij,m}^{n+1} \eth_j u^{n+1}_{i,m} \eth_j \varphi^{n+1}_{i,m}\dx
+ \int_\Omega \mu_{ij,m}^{n+1} \eth_j u^{n+1}_{i,m} \eth_j (\varphi^{n}_{i,m}-\varphi^{n+1}_{i,m})\dx
\]

Thanks to the regularity of $\varphi$ and the bounds on
$\mu_{ij,m}^{n+1}$ and $\eth_j u^{n+1}_{i,m}$ the second term vanishes
as $m\rightarrow \infty$ . The desired convergence result follows
using the weak convergence of the discrete derivatives (Step 5)
combined with the strong convergences as $m \rightarrow \infty$ of
$\mu_{ij,m}$, $\eth_j \varphi_{i,m}$ and $\eth_i\varphi_{j,m}$ to
$\bar \mu$, $\partial_j \varphi_{i}$ and $\partial_i \varphi_{j}$
respectively in $L^2((0,T)\times\Omega)$ (up to a subsequence, by the
dominated convergence theorem). Therefore,
\[
\lim_{m\rightarrow \infty}T_{3}^{(m)} = \int_{0}^{T} \int_\Omega \bar \mu \bs{D}(\bar \bfu) : \bs{D}(\bs{\varphi})\dx \dt
\]
\noindent
Finally, we have for the last term $T_{4,i}^{(m)}$:
\[
T_{4,i}^{(m)}=\sum_{n=0}^{N-1} \deltat\int_{\Omega} f_{i,m}^{n+1}\varphi_{i,m}^{n+1}\dx
+ \sum_{n=0}^{N-1} \deltat\int_{\Omega} f_{i,m}^{n+1}(\varphi_{i,m}^{n}-\varphi_{i,m}^{n+1})\dx
\]
Thanks to the continuity of the Fortin operator \eqref{fortin-prop1} we get the
strong convergence of $(f_{i,m})_m$ towards $f_{i}$ in $L^p((0,T)\times\Omega)$
for $p\in[1,\infty)$. Moreover, by a Taylor inequality (Lemma \ref{lem-consistance})
the second  term tends to $0$ as $m \rightarrow \infty$. Then,
\[
\lim_{m\rightarrow\infty} \sum_{i=1}^d T_{4,i}^{(m)} = \int_{0}^{T} \int_\Omega   \bff   \cdot \bfvarphi  \dx \dt
\]
which concludes the convergence proof.

\bibliographystyle{plain}
\bibliography{conv2}

\clearpage

\appendix
\thispagestyle{empty}

\section{Transport equation theory}\label{annex:transport}
Let $\rho_0 \in L^\infty(\Omega)$ and $\bfu \in L^2((0,T),\bfH^1_0(\Omega))$ with $\dive \bfu = 0$.
We assume that $\rho \in L^\infty(]0,T[\times \Omega)$ is a solution to the following problem,
\begin{equation}\label{solu-faible-transport}
  \int_0^T\int_\Omega \rho \left(\partial_t \varphi + \bfu\cdot \nabla \varphi \right)\dx\dt
  + \int_\Omega \rho_0\varphi(0,.) = 0  \qquad \forall \varphi \in C^1_c(]0,T[\times \Omega)
\end{equation}

\begin{theorem}[Renormalisation property \cite{BoyerFabrie-book}]\label{renormalisation}
  Let $\Omega$ be an open bounded domain $\subset \R^d$ with Lipschitz border,
  $\rho_0 \in L^\infty(\Omega)$ and $\bfu \in L^2((0,T),\bfH^1_0(\Omega))$ with $\dive \bfu = 0$.
  For any $\beta \in C^1(\R)$ and any solution of \eqref{solu-faible-transport}
  $\rho  \in L^\infty(]0,T[\times\Omega)$ for the initial data $\rho_0$ we have:
\begin{equation}
  \partial_t \beta(\rho) + \dive(\beta(\rho)\bfu) = 0
\end{equation}
in a weak sense with $\beta(\rho)_{|_{t=0}} = \beta(\rho_0)$, and as a consequence
$\beta(\rho)$ is solution of \eqref{solu-faible-transport} for the initial data $\beta(\rho_0)$.
\end{theorem}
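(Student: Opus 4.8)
This is the classical DiPerna--Lions renormalisation lemma, and the plan is to recall its proof by a space‑mollification argument. Fix a standard mollifier $(\eta_\varepsilon)_{\varepsilon>0}$ on $\R^d$ and set $\rho_\varepsilon = \rho \ast_{\bfx}\eta_\varepsilon$, which is smooth in $\bfx$ on any compact subset of $\Omega$ once $\varepsilon$ is small. Since $\dive\bfu=0$, the weak formulation \eqref{solu-faible-transport} means $\partial_t\rho + \bfu\cdot\nabla\rho = 0$ in $\mathcal D'((0,T)\times\Omega)$; convolving in space locally yields
\[
\partial_t\rho_\varepsilon + \bfu\cdot\nabla\rho_\varepsilon = r_\varepsilon,\qquad
r_\varepsilon := \bfu\cdot\nabla\rho_\varepsilon - (\bfu\cdot\nabla\rho)\ast_{\bfx}\eta_\varepsilon .
\]
The first key step is the Friedrichs / DiPerna--Lions commutator lemma: because $\rho\in L^\infty$ and $\bfu\in L^2(0,T;H^1_0(\Omega)^d)\subset L^1_{\mathrm{loc}}(0,T;W^{1,1}_{\mathrm{loc}}(\Omega)^d)$, one has the uniform bound $\|r_\varepsilon\|_{L^1}\le C\|\rho\|_{L^\infty}\|\nabla\bfu\|_{L^1}$ on compact sets and $r_\varepsilon\to 0$ in $L^1_{\mathrm{loc}}((0,T)\times\Omega)$ as $\varepsilon\to 0$.

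Next, since $\rho_\varepsilon$ is smooth in $\bfx$ and, by the displayed equation, absolutely continuous in $t$ with values in $L^p_{\mathrm{loc}}$, the ordinary chain rule applies and gives, for $\beta\in C^1(\R)$,
\[
\partial_t\beta(\rho_\varepsilon) + \bfu\cdot\nabla\beta(\rho_\varepsilon) = \beta'(\rho_\varepsilon)\,r_\varepsilon
\]
in $\mathcal D'$. I would then pass to the limit $\varepsilon\to 0$: up to a subsequence $\rho_\varepsilon\to\rho$ a.e. and in $L^p_{\mathrm{loc}}$ for every $p<\infty$, so by continuity of $\beta$ and the $L^\infty$ bound $\beta(\rho_\varepsilon)\to\beta(\rho)$ in $L^p_{\mathrm{loc}}$; since $\beta'(\rho_\varepsilon)$ is bounded and $r_\varepsilon\to0$ in $L^1_{\mathrm{loc}}$, the right‑hand side tends to $0$; and, writing $\bfu\cdot\nabla\beta(\rho_\varepsilon)=\dive(\beta(\rho_\varepsilon)\bfu)$ thanks to $\dive\bfu=0$, the convection term converges to $\dive(\beta(\rho)\bfu)$ in $\mathcal D'$ because $\beta(\rho_\varepsilon)\bfu\to\beta(\rho)\bfu$ in $L^1_{\mathrm{loc}}$. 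Hence $\partial_t\beta(\rho)+\dive(\beta(\rho)\bfu)=0$ in $\mathcal D'((0,T)\times\Omega)$; a standard localisation/density argument, using that $\bfu\in H^1_0(\Omega)^d$ (so it extends by zero with its $W^{1,1}$ regularity preserved), lets one take test functions up to $\partial\Omega$.

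For the initial condition I would keep the time variable free: testing the $\varepsilon$‑level identity against $\varphi$ not vanishing at $t=0$ produces
\[
-\int_0^T\!\!\int_\Omega \beta(\rho_\varepsilon)\bigl(\partial_t\varphi+\bfu\cdot\nabla\varphi\bigr)\dx\dt
-\int_\Omega \beta(\rho_0\ast\eta_\varepsilon)\,\varphi(0,\cdot)\dx
=\int_0^T\!\!\int_\Omega \beta'(\rho_\varepsilon)\,r_\varepsilon\,\varphi\dx\dt,
\]
and, letting $\varepsilon\to0$ with $\beta(\rho_0\ast\eta_\varepsilon)\to\beta(\rho_0)$ in every $L^p(\Omega)$, one recovers \eqref{solu-faible-transport} for $\beta(\rho)$ with initial datum $\beta(\rho_0)$; this identifies the trace $\beta(\rho)|_{t=0}=\beta(\rho_0)$ once one notes, from the equation just obtained, that $\beta(\rho)$ admits a representative in $C^0([0,T];H^{-1}(\Omega))$. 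I expect the commutator lemma to be the main obstacle: controlling $r_\varepsilon$ requires exploiting the $W^{1,1}$ regularity of $\bfu$ together with the elementary fact $\|\tau_h f-f\|_{L^1}\to0$, and one must be slightly careful near $\partial\Omega$, which is precisely why it is convenient to argue first in the interior and only afterwards invoke the homogeneous boundary condition on $\bfu$.
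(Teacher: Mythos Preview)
The paper does not actually prove this theorem: it is stated in Appendix~\ref{annex:transport} with a citation to \cite{BoyerFabrie-book} in the theorem heading, and no proof is given --- the result is simply recalled from the literature as a tool to be used in Step~6 of the convergence proof. So there is no ``paper's own proof'' to compare against.

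That said, your sketch is the standard DiPerna--Lions argument and is essentially correct: mollify in space, use the Friedrichs commutator lemma to show the remainder $r_\varepsilon\to 0$ in $L^1_{\mathrm{loc}}$ thanks to $\bfu\in L^1(0,T;W^{1,1}_{\mathrm{loc}})$ and $\rho\in L^\infty$, apply the ordinary chain rule at the regularised level, and pass to the limit. The treatment of the boundary via the $H^1_0$ extension of $\bfu$ by zero, and of the initial trace via test functions not vanishing at $t=0$, are also the right moves. This is in fact precisely the line of argument followed in \cite{BoyerFabrie-book} (and in the original \cite{DiPL}), so your proposal is faithful to the reference the paper cites. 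One minor point: to justify that $\rho_\varepsilon$ has an absolutely continuous representative in $t$ (so that the classical chain rule applies), you should invoke that $\partial_t\rho_\varepsilon\in L^1_{\mathrm{loc}}$ in the sense of distributions with an $L^1$ density, which follows from the mollified equation itself once $r_\varepsilon$ and $\bfu\cdot\nabla\rho_\varepsilon$ are known to be $L^1_{\mathrm{loc}}$.
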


\begin{theorem}[Unicity and regularity of the solution]\label{unicite-transport}
  Let $\Omega$ be an open bounded domain $\subset \R^d$ with Lipschitz border,
  $\rho_0 \in L^\infty(\Omega)$ and $\bfu \in L^2((0,T),\bfH^1_0(\Omega))$ with
  $\dive \bfu = 0$. If $\rho  \in L^\infty(]0,T[\times\Omega)$ is solution to
  \eqref{solu-faible-transport}  for the initial data $\rho_0$, then such a solution is unique and:
  \[
  \rho \in C^0([0,T],L^q(\Omega)) \quad \forall q < + \infty.
  \]
\end{theorem}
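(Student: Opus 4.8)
I would rely entirely on the renormalisation property of Theorem~\ref{renormalisation}: for every $\beta\in C^1(\R)$ and every weak solution $\rho\in L^\infty(]0,T[\times\Omega)$ of \eqref{solu-faible-transport} with datum $\rho_0$, the function $\beta(\rho)$ is again a weak solution, with datum $\beta(\rho_0)$. The one point requiring care is that, because $\bfu\in\bfH^1_0(\Omega)$, the flux $\beta(\rho)\bfu$ lies in $L^2((0,T)\times\Omega)^d$ and has vanishing normal trace on $\partial\Omega$; hence the weak formulation \eqref{solu-faible-transport} extends to test functions that are merely constant near $\partial\Omega$, and inserting $\varphi(t,\bfx)=\theta(t)$ with $\theta\in C^1_c([0,T))$ and using $\dive\bfu=0$ yields, for every such $\beta$,
\begin{equation}\nonumber
  \int_\Omega \beta\bigl(\rho(t,\bfx)\bigr)\dx = \int_\Omega \beta\bigl(\rho_0(\bfx)\bigr)\dx, \qquad \text{for a.e. } t\in(0,T).
\end{equation}
Making this rigorous amounts to building cutoffs $\xi_k\uparrow 1$ with $\bfu\cdot\nabla\xi_k\to 0$ in $L^2((0,T)\times\Omega)$, which is exactly where the hypothesis $\bfu\in\bfH^1_0(\Omega)$ enters; I expect this (essentially, the absence of a boundary contribution when integrating the renormalised equation over $\Omega$) to be the main and almost only technical obstacle.

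\textbf{Uniqueness.} Since the problem is linear in the unknown, if $\rho_1,\rho_2$ are two solutions of \eqref{solu-faible-transport} with the same datum $\rho_0$, then $\rho:=\rho_1-\rho_2$ solves \eqref{solu-faible-transport} with datum $0$. I would apply Theorem~\ref{renormalisation} with $\beta(\zeta)=\zeta^2$ and then the conservation identity above to get $\int_\Omega\rho(t,\bfx)^2\dx = 0$ for a.e.\ $t\in(0,T)$, hence $\rho_1=\rho_2$ a.e.

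\textbf{Regularity.} From $\rho\in L^\infty$ and $\bfu\in L^2((0,T),\bfH^1_0(\Omega))\subset L^2((0,T)\times\Omega)^d$ one gets $\rho\bfu\in L^2((0,T)\times\Omega)^d$, so the equation $\partial_t\rho = -\dive(\rho\bfu)$ shows $\partial_t\rho\in L^2(0,T;H^{-1}(\Omega))$; combined with $\rho\in L^\infty(0,T;L^2(\Omega))$ this gives a representative in $C^0([0,T];H^{-1}(\Omega))$ which, by a classical interpolation lemma (see \cite{NS-temam}), is weakly continuous, $\rho\in C^0_{\mathrm w}([0,T];L^2(\Omega))$. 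Applying Theorem~\ref{renormalisation} once more with $\beta(\zeta)=\zeta^2$, together with the same cutoff argument, I would show that $t\mapsto\|\rho(t)\|_{L^2(\Omega)}^2=\int_\Omega\rho^2(t,\bfx)\dx$ is continuous and identically equal to $\|\rho_0\|_{L^2(\Omega)}^2$; weak continuity in $L^2(\Omega)$ plus continuity of the norm and the Hilbert (Radon--Riesz) property then force $\rho\in C^0([0,T];L^2(\Omega))$.

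\textbf{From $L^2$ to $L^q$.} Finally, for an arbitrary $q<+\infty$ I would deduce strong $L^q$-continuity from the uniform bound $\|\rho(t)\|_{L^\infty(\Omega)}\le\|\rho\|_{L^\infty((0,T)\times\Omega)}$ (preserved for the weakly continuous representative) and Hölder's inequality on the bounded set $\Omega$: for $q\ge 2$, $\|\rho(t)-\rho(s)\|_{L^q(\Omega)}^q\le (2\|\rho\|_{L^\infty})^{q-2}\,\|\rho(t)-\rho(s)\|_{L^2(\Omega)}^2$, while for $q<2$, $\|\cdot\|_{L^q(\Omega)}\le|\Omega|^{1/q-1/2}\|\cdot\|_{L^2(\Omega)}$. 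Either way $\rho\in C^0([0,T];L^q(\Omega))$ for every $q<+\infty$, which completes the proof.
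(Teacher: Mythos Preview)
The paper does not actually prove Theorem~\ref{unicite-transport}: it is stated without proof in Appendix~\ref{annex:transport} as a known result from the DiPerna--Lions theory (references \cite{DiPL}, \cite{BoyerFabrie-book}), and is then merely \emph{invoked} in Step~6 of the convergence proof. So there is no ``paper's own proof'' to compare against.

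That said, your proposal is the standard argument and is correct. The very ideas you use --- renormalisation with $\beta(\zeta)=\zeta^2$, testing against $\varphi\equiv 1$ (after a cutoff justification relying on $\bfu\in\bfH^1_0$), conservation of the $L^2$ norm, and then the Radon--Riesz step from weak continuity plus norm continuity to strong continuity --- are precisely what the paper sketches when it \emph{applies} Theorems~\ref{renormalisation} and~\ref{unicite-transport} in Step~6 (``Taking $\beta(\zeta) = \zeta^2$ in Theorem~\ref{renormalisation} along with the test function $\varphi = 1$\dots''). Your interpolation to $L^q$ via the uniform $L^\infty$ bound is also the standard closing step. In short: nothing is missing, and your write-up could serve as the proof the appendix omits.
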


\section[Discrete functional analysis]{Discrete functional analysis}\label{annex:compac}


\begin{lemma}[{\cite[Lemme 3.3]{FV-book}}]\label{compac:astuce}
  Let $\Omega$ be an open bounded domain of $R^d$, $d=2,3$. Let $\mathcal{D} = (\mesh, \edges)$
  be an admissible MAC grid according to Definition \ref{def:MACgrid} and an element
  $u \in \Hmeshizero$ with $i \in \llbracket1,d\rrbracket$. Thus we define $\tilde{u}$ in the
  following manner : $\tilde{u} = u \quad \mbox{a.e. in } \Omega  $
  and $\tilde{u} = 0 \quad \mbox{a.e. in } \R^d\setminus\Omega $. Then, there exists $C>0$
  dependant only  on $\Omega$ such that,
  \[
  \| \tilde{u}(\cdot + \eta) - \tilde{u}\|^2_{L^2(\R^d)}
  \le \| u \|^2_{1,\edgesi,0} |\eta|(|\eta| + C h_\mesh), \quad \forall \eta \in \R^d.
  \]
\end{lemma}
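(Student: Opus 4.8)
The plan is to adapt the classical discrete Nirenberg/Poincaré-type translation estimate for piecewise constant functions, working directly with a general shift $\eta\in\R^d$, $\eta\neq 0$. For almost every $\bfx\in\R^d$ the segment $[\bfx,\bfx+\eta]$ meets the dual faces $\edged\in\edgesdi$ transversally and in finitely many points; let $\chi_\edged(\bfx,\eta)\in\{0,1\}$ be the indicator that $[\bfx,\bfx+\eta]$ crosses $\edged$, let $\bfn_\edged$ denote a unit normal to $\edged$, and set $|D_\edged u|=|u_\edge-u_{\edge'}|$ if $\edged=\edge|\edge'\in\edgesdinti$ and $|D_\edged u|=|u_\edge|$ if $\edged\in\edgesdexti$. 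Since $\tilde u$ is piecewise constant along the segment and jumps only when the segment passes from one dual cell into the next, and since $\tilde u$ vanishes outside $\Omega$ while $u$ vanishes on the boundary dual cells, the value $\tilde u(\bfx+\eta)-\tilde u(\bfx)$ is a telescoping sum of signed jumps, each of which is one of the quantities $\pm|D_\edged u|$ appearing in \eqref{normi-rv}. Hence, for a.e. $\bfx$,
\[
|\tilde u(\bfx+\eta)-\tilde u(\bfx)|\le\sum_{\edged\in\edgesdi}\chi_\edged(\bfx,\eta)\,|D_\edged u|,
\]
where only faces with $\bfn_\edged\cdot\eta\neq 0$ contribute (the others are crossed only on a null set of $\bfx$).

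Next I would square this and apply Cauchy--Schwarz with the weights $d_\edged\,|\bfn_\edged\cdot\eta|$:
\[
|\tilde u(\bfx+\eta)-\tilde u(\bfx)|^2\le\Big(\sum_{\edged}\chi_\edged(\bfx,\eta)\,\frac{|D_\edged u|^2}{d_\edged\,|\bfn_\edged\cdot\eta|}\Big)\Big(\sum_{\edged}\chi_\edged(\bfx,\eta)\,d_\edged\,|\bfn_\edged\cdot\eta|\Big).
\]
For the second factor I would exploit the Cartesian structure: grouping the crossed faces according to the coordinate direction $\bfe_j$ they are orthogonal to, one has $|\bfn_\edged\cdot\eta|=|\eta_j|$ there, while the normal spacings $d_\edged$ of the faces met along the segment telescope to at most $|\eta_j|+C h_\mesh$. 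Summing over $j$ gives, uniformly in $\bfx$,
\[
\sum_{\edged}\chi_\edged(\bfx,\eta)\,d_\edged\,|\bfn_\edged\cdot\eta|\le\sum_{j=1}^d|\eta_j|\,(|\eta_j|+Ch_\mesh)\le|\eta|^2+C'h_\mesh|\eta|=|\eta|\,(|\eta|+C h_\mesh).
\]

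Finally I would integrate over $\bfx\in\R^d$ and use Fubini on the first factor. The single geometric fact needed is that, for a fixed face $\edged$, the set $\{\bfx:[\bfx,\bfx+\eta]\cap\edged\neq\emptyset\}$ is the prism $\bigcup_{t\in[0,1]}(\edged-t\eta)$, which (when $\bfn_\edged\cdot\eta\neq 0$) has Lebesgue measure $|\edged|\,|\bfn_\edged\cdot\eta|$ by the change of variables $(\bfy,t)\mapsto\bfy-t\eta$; thus $\int_{\R^d}\chi_\edged(\bfx,\eta)\dx=|\edged|\,|\bfn_\edged\cdot\eta|$. Combining the two bounds yields
\[
\int_{\R^d}|\tilde u(\bfx+\eta)-\tilde u(\bfx)|^2\dx\le|\eta|\,(|\eta|+C h_\mesh)\sum_{\edged\in\edgesdi}\frac{|D_\edged u|^2}{d_\edged\,|\bfn_\edged\cdot\eta|}\,|\edged|\,|\bfn_\edged\cdot\eta|=|\eta|\,(|\eta|+C h_\mesh)\,\|u\|_{1,\edgesi,0}^2,
\]
the last equality being the definition \eqref{normi-rv} of the norm. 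The main obstacle is the geometric bookkeeping in the two estimates — the telescoping of the spacings $d_\edged$ along a segment and the computation of the prism volume — together with verifying that the jumps of $\tilde u$ along the segment are exactly the interior and boundary terms of $\|u\|_{1,\edgesi,0}$ carrying the correct weights $d_\edged$; once these are settled, the remainder is Cauchy--Schwarz and Fubini. (For the detailed verification of these geometric facts we refer to \cite{FV-book}.)
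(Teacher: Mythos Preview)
The paper does not give its own proof of this lemma: it is stated in the appendix with a direct citation to \cite[Lemme 3.3]{FV-book} and no argument is provided. Your proposal is precisely the standard translation estimate from that reference, adapted to the dual MAC mesh $\edgesdi$ (telescoping the jumps of $\tilde u$ along $[\bfx,\bfx+\eta]$, Cauchy--Schwarz with weights $d_\edged|\bfn_\edged\cdot\eta|$, the telescoping bound $\sum d_\edged|\bfn_\edged\cdot\eta|\le |\eta|(|\eta|+Ch_\mesh)$, and the prism-volume identity $\int\chi_\edged\dx=|\edged||\bfn_\edged\cdot\eta|$), so it is correct and coincides with the cited argument.
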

\bop
\medskip
\begin{theorem}[{\cite[Theorem 3.10]{FV-book}}]\label{compac:space}
  Let $\Omega$ be an open bounded domain of $\R^N$ with Lipschitz boundary, $N \geq 1$,
  and $\{u_n, n \in \mathbb{N}\}$ a bounded sequence of $L^2(\Omega)$.
  Let us define the sequence $\{\tilde{u}_n, n \in \mathbb{N}\}$ as
  $\tilde{u}_n = u_n  \quad \mbox{a.e. on } \Omega$ and $\tilde{u}_n = 0  \quad \mbox{a.e. on }\mathbb{R}^N\setminus \Omega$.
  Assuming the existence of a constant $C \in \mathbb{R}$ et $\{h_n, n \in \mathbb{N}\}\subset \mathbb{R_+}$
  with $h_n \rightarrow 0$ and $n \rightarrow \infty$ with,
\begin{equation}\label{theo-compact:translates}
  \| \tilde{u}_n(\cdot + \eta) - \tilde{u}_n\|^2_{L^2(\mathbb{R}^N)}\leq
  C |\eta |(|\eta| + h_n), \quad \forall n \in \mathbb{N}, \forall \eta \in \mathbb{R}^N.
\end{equation}
Then, $\{u_n, n \in \mathbb{N}\}$ is relatively compact in $L^2(\Omega)$.
\end{theorem}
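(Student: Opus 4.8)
The plan is to reduce the statement to the classical Kolmogorov--Riesz compactness criterion in $L^2(\R^N)$, which characterises the relatively compact subsets of $L^2(\R^N)$ as exactly the families that are (i) bounded, (ii) uniformly tight, that is with uniformly small $L^2$ mass outside large balls, and (iii) uniformly equicontinuous under translations, that is the supremum over the family of $\|v(\cdot+\eta)-v\|_{L^2}$ tends to $0$ as $\eta\to 0$. Since $\{u_n\}$ is bounded in $L^2(\Omega)$ and $\|\tilde u_n\|_{L^2(\R^N)}=\|u_n\|_{L^2(\Omega)}$, condition (i) holds immediately with the same bound.

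For condition (ii), $\Omega$ being bounded I would fix $R_0$ with $\Omega\subset B(0,R_0)$; then $\tilde u_n\equiv 0$ on $\R^N\setminus B(0,R_0)$ for every $n$, so the mass outside $B(0,R)$ is exactly $0$ for all $R\ge R_0$, uniformly in $n$. Condition (iii) is the one point deserving a remark, since the right-hand side of \eqref{theo-compact:translates} depends on $n$ through $h_n$. Here I would use that a convergent sequence is bounded: because $h_n\to 0$ there is $H\ge 0$ with $0\le h_n\le H$ for all $n$, and because the constant $C$ in the hypothesis is the same for all $n$, \eqref{theo-compact:translates} gives
\[
\|\tilde u_n(\cdot+\eta)-\tilde u_n\|_{L^2(\R^N)}^2 \le C\,|\eta|\,(|\eta|+H) \qquad \text{for all } n\in\xN,\ \eta\in\R^N,
\]
whose right-hand side tends to $0$ as $|\eta|\to 0$ independently of $n$; this is precisely the uniform equicontinuity required.

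With (i)--(iii) in hand, the Kolmogorov--Riesz theorem yields that $\{\tilde u_n\}$ is relatively compact in $L^2(\R^N)$; since the restriction map $L^2(\R^N)\to L^2(\Omega)$ is linear and continuous and sends $\tilde u_n$ to $u_n$, the sequence $\{u_n\}$ is relatively compact in $L^2(\Omega)$, which is the claim. If a self-contained argument is preferred, the invocation of Kolmogorov--Riesz can be replaced by a mollification argument: with a standard mollifier $\rho_\delta$, the uniform translate bound above gives $\sup_n\|\tilde u_n*\rho_\delta-\tilde u_n\|_{L^2}\le (C\delta(\delta+H))^{1/2}\to 0$ as $\delta\to 0$, while for each fixed $\delta$ the family $\{\tilde u_n*\rho_\delta\}_n$ is bounded and equi-Lipschitz with support in a fixed compact set, hence relatively compact in $L^2$ by Ascoli's theorem; a family that is, uniformly in $n$, approximable by relatively compact families is totally bounded, hence relatively compact.

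The only, very mild, obstacle is the bookkeeping in the second paragraph: observing that the $n$-dependent term $h_n$ in \eqref{theo-compact:translates} does not obstruct the compactness criterion, because $h_n\to 0$ forces $(h_n)_n$ to be bounded and the constant $C$ is uniform in $n$. Beyond that no genuine analytic difficulty arises, the substance of the statement being entirely contained in the classical Kolmogorov--Riesz theorem (equivalently, in the mollification estimate together with Ascoli's theorem).
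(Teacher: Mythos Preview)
The paper does not supply its own proof of this theorem: it is stated in the appendix as a reference result, attributed to \cite[Theorem~3.10]{FV-book}. Your argument via the Kolmogorov--Riesz criterion is correct and is essentially the standard proof of that cited result; in particular, your observation that the $n$-dependence in the translate estimate is harmless because $(h_n)$ is bounded is the right way to recover uniform equicontinuity.
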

\medskip

\begin{definition}[Compactly embedded sequence]\label{compact-embedded-sequence}
  Let $B$ be a Banach space and $(X_n)_{n\in\xN}$ be a sequence of Banach spaces included in $B$.
  The sequence $(X_n)_{n\in\xN}$ is said to be compactly embedded in $B$ if any sequence satisfying:
\begin{itemize}
  \item $u_n\in X_n$ for all $n\in\xN$
  \item the sequence $(\|u_n\|_{X_n})_{n\in\xN}$ is bounded
\end{itemize}
is relatively compact in B.
\end{definition}

\begin{theorem}[Time compactness with a sequence of subspaces, \cite{book-gallouet-14} Proposition 4.48]\label{theo-compact-space}
  \qquad .\\
  We set $1 \le p < \infty$ and $T>0$. Let $B$ be a Banach space and $(X_n)_{n\in \xN}$
  be a sequence of Banach spaces compactly embedded in $B$. Let $(f_n)_{n\in \xN}$ be a sequence
  of $L^p((0,T);B)$ satisfying the following conditions:
\begin{itemize}
\item The sequence $(f_n)_{n\in \xN}$ in bounded in $L^p((0,T);B)$.
\item The sequence $(\|f_n\|_{L^1(0,T;X_n)})_{n\in \xN}$ is bounded.
\item There exists a non-decreasing function $\eta$ from $(0,T)$ to $\R_{+}$ such that $\lim_{h\rightarrow0} \eta(h) = 0$ and, $\forall h \in (0,T)$ and $n\in \xN$, we have:
\begin{equation}
  \int_0^{T-h} \| f_n(t+h) -f_n(t) \|^p_B \dt \le \eta(h)
\end{equation}
\end{itemize}
Then, $(f_n)_{n\in \xN}$ is relatively compact in $L^p((0,T);B)$.
\end{theorem}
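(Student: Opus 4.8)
The plan is to reduce the statement to the vector-valued Kolmogorov--Riesz--Fr\'echet criterion (in the form due to Simon) for relative compactness in $L^p((0,T);B)$, and then to verify its two hypotheses using, respectively, the time-translate bound and the $L^1(0,T;X_n)$ bound together with the compact-embedding assumption. The abstract criterion I would invoke reads: a family $\mathcal F\subset L^p((0,T);B)$ which is bounded in $L^p((0,T);B)$ is relatively compact in $L^p((0,T);B)$ provided (a) its time-translates are uniformly small, i.e. $\sup_{f\in\mathcal F}\int_0^{T-h}\|f(t+h)-f(t)\|_B^p\,\dt\to 0$ as $h\to 0^+$, and (b) for every $0\le t_1<t_2\le T$ the set $\{\int_{t_1}^{t_2} f(t)\,\dt : f\in\mathcal F\}$ is relatively compact in $B$. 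Here the integral condition (b) plays the role that finite-dimensionality of the target plays in the scalar Kolmogorov theorem. First I would state this criterion and reduce the proof to checking (a) and (b) for $\mathcal F=\{f_n : n\in\xN\}$.

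Condition (a) is immediate. The first hypothesis gives boundedness in $L^p((0,T);B)$, and the third hypothesis furnishes precisely a non-decreasing modulus $\eta$ with $\int_0^{T-h}\|f_n(t+h)-f_n(t)\|_B^p\,\dt\le \eta(h)$ and $\eta(h)\to 0$, uniformly in $n$. Hence only (b) requires genuine work.

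For (b), fix $0\le t_1<t_2\le T$ and set $g_n=\int_{t_1}^{t_2} f_n(t)\,\dt$. The second hypothesis says $f_n\in L^1(0,T;X_n)$, so $f_n(t)\in X_n$ for a.e.\ $t$ and the Bochner integral $g_n$ is a well-defined element of $X_n$ (coinciding with the $B$-valued integral since $X_n\hookrightarrow B$ continuously), with $\|g_n\|_{X_n}\le \int_{t_1}^{t_2}\|f_n(t)\|_{X_n}\,\dt\le \|f_n\|_{L^1(0,T;X_n)}\le C$. Thus $(g_n)_n$ satisfies $g_n\in X_n$ with $(\|g_n\|_{X_n})_n$ bounded, so by the assumption that $(X_n)_n$ is compactly embedded in $B$ (Definition \ref{compact-embedded-sequence}) it is relatively compact in $B$. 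This is exactly (b), and the criterion then yields relative compactness of $(f_n)_n$ in $L^p((0,T);B)$.

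The main obstacle is the abstract criterion itself in the Bochner setting, and in particular the recognition that (b) must be phrased through integrals over subintervals rather than through pointwise-in-time compactness of $\{f_n(t)\}_n$: the latter is \emph{not} available here, since the second hypothesis controls $\|f_n(t)\|_{X_n}$ only in $L^1(\dt)$ and not uniformly in $t$. If one prefers a self-contained argument rather than citing the criterion, the core technical step is a time-regularization: introduce the Steklov average $f_n^h(t)=\tfrac1h\int_t^{t+h} f_n(s)\,ds$ and show, by Jensen's inequality and the monotonicity of $\eta$, that $\|f_n^h-f_n\|_{L^p((0,T-h);B)}\le \eta(h)^{1/p}$ uniformly in $n$; one then establishes relative compactness of the regularized family $(f_n^h)_n$ for each fixed $h$ by partitioning $(0,T-h)$ into subintervals, approximating $f_n^h$ by step functions whose values are averages of $f_n$ over intervals --- which lie in $X_n$ with uniformly bounded norm and are hence relatively compact in $B$ --- and combining with the uniform time-equicontinuity inherited from $\eta$; finally one passes $h\to 0$ using the uniform approximation estimate to conclude total boundedness of $(f_n)_n$ in $L^p((0,T);B)$.
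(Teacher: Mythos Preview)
The paper does not actually prove this theorem: it is stated in the appendix as a citation of Proposition~4.48 from \cite{book-gallouet-14}, without proof. So there is no ``paper's own proof'' to compare against.

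That said, your argument is correct and is essentially the standard proof of this result. You correctly invoke Simon's characterization of relative compactness in $L^p((0,T);B)$ via (a) uniform smallness of time translates and (b) relative compactness in $B$ of the sets $\{\int_{t_1}^{t_2} f_n(t)\,\dt\}$ for each subinterval. Your verification of (b) is the heart of the matter and is done properly: since $f_n\in L^1(0,T;X_n)$, the Bochner integral $g_n=\int_{t_1}^{t_2} f_n$ lies in $X_n$ with $\|g_n\|_{X_n}\le \|f_n\|_{L^1(0,T;X_n)}\le C$, and the compact-embedding hypothesis on the sequence $(X_n)_n$ (Definition~\ref{compact-embedded-sequence}) then gives relative compactness of $(g_n)_n$ in $B$. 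Your remark that one must use integrals over subintervals rather than pointwise-in-$t$ compactness is exactly the right diagnosis of why the $L^1$ bound on $\|f_n(\cdot)\|_{X_n}$ suffices. The sketch of a self-contained proof via Steklov averages is also a valid alternative route.
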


\end{document}